\documentclass[12pt]{amsart}
\usepackage{cite}
\usepackage[all]{xy}
\usepackage{graphicx}
\usepackage{textcomp}
\usepackage{charter}
\usepackage[vflt]{floatflt}
\usepackage{amssymb}

\pagestyle{plain}

\theoremstyle{plain}
\newtheorem{thm}{Theorem}[subsection]
\newtheorem{cor}[thm]{Corollary}
\newtheorem{lem}[thm]{Lemma}
\newtheorem{prop}[thm]{Proposition}

\theoremstyle{definition}
\newtheorem{dfn}[thm]{Definition}

\theoremstyle{remark}
\newtheorem{rem}[thm]{Remark}

\newtheorem{ex}[thm]{Example}

\numberwithin{equation}{section}

\newcommand{\pd}[2]
{\dfrac{\partial #1}{\partial #2}}
\newcommand{\pda}[1]{\pd{#1}{a}}
\newcommand{\pdb}[1]{\pd{#1}{b}}
\newcommand{\pdt}[1]{\pd{#1}{t}}

\newcommand{\tr}{\operatorname{tr}}

\newcommand{\fg}{{\mathfrak g}}
\newcommand{\cfg}{{\mathfrak g}_{\mathbb{C}}}
\newcommand{\fh}{{\mathfrak h}}
\newcommand{\fG}{{\mathfrak G}}
\newcommand{\fV}{{\mathfrak V}}
\newcommand{\fU}{{\mathfrak U}}
\newcommand{\fH}{{\mathfrak H}}

\newcommand{\fC}{{\mathfrak C}}

\newcommand{\RR}{{\mathbb R}}
\newcommand{\CC}{{\mathbb C}}
\newcommand{\ZZ}{{\mathbb Z}}

\newcommand{\bu}{{\mathbf u}}

\newcommand{\bx}{{\mathbf x}}
\newcommand{\by}{{\mathbf y}}
\newcommand{\bS}{{\mathbf S}}

\newcommand{\cE}{{\mathcal E}}
\newcommand{\cH}{{\mathcal H}}
\newcommand{\cP}{{\mathcal P}}

\newcommand{\cL}{{\mathcal L}}

\newcommand{\cV}{{\mathcal V}}

\renewcommand{\gg}{\gamma}
\newcommand{\ggst}{{\gamma_*}}
\newcommand{\gs}{\sigma}
\newcommand{\gr}{\rho}

\newcommand{\genmatrix}[4]{\left(\begin{array}{cc} #1 & #2 \\ #3 & #4 \end{array}\right)}
\newcommand{\Gmatrix}[2]{\genmatrix{#1}{#2}{0}{1}}
\newcommand{\gmatrix}[2]{\genmatrix{#1}{#2}{0}{0}}

\newcommand{\bcE}{{\boldsymbol \cE}}

\newcommand{\chpinf}{\cH_{\pi,\infty}}
\newcommand{\chpminf}{\cH_{\pi,-\infty}}
\newcommand{\ihptens}{I_{\cV^*} \otimes}
\newcommand{\chpinfplus}{\cH_{\pi_{+},\infty}}
\newcommand{\chpminfplus}{\cH_{\pi_{+},-\infty}}

\newcommand{\expmap}{\operatorname{exp}}

\newcommand{\ad}{\operatorname{ad}}


\begin{document}
\title{Representations of Lie Algebras by non-Skewselfadjoint Operators in Hilbert Space}
\author{Eli Shamovich}
\address{Department of  Mathematics\\ 
Ben-Gurion University of the Negev\\
84105 Beer-Sheva, Israel}
\email{shamovic@math.bgu.ac.il}
\thanks{The research of E. S. was partially carried out during the visits to the Department of Mathematics and Statistics of the University of Konstanz, supported by the EDEN Erasmus Mundus program (30.12.2013 - 30.6.2014) and to the MFO, supported by the Leibnitz graduate student program (6.4.2014-12.4.2014). The research of E. S. was also supported by the Negev fellowship of the Kreitman school of the Ben Gurion University of the Negev.}
\author{Victor Vinnikov}
\address{Department of  Mathematics\\ 
Ben-Gurion University of the Negev\\
84105 Beer-Sheva, Israel}
\email{vinnikov@math.bgu.ac.il}
\begin{abstract}
We study non-selfadjoint representations of a finite dimensional real Lie algebra $\fg$. 
To this end we embed a non-selfadjoint representation of $\fg$ into a more complicated structure,
that we call a $\fg$-operator vessel and that is associated to
an overdetermined linear conservative input/state/output
system on the corresponding simply connected Lie group $\fG$. 
We develop the frequency domain theory of the system in terms of representations of $\fG$,
and introduce the joint characteristic function of a $\fg$-operator vessel which is the analogue
of the classical notion of the characteristic function of a single non-selfadjoint operator. 
As the first non-commutative example, 
we apply the theory to the Lie algebra of the $ax+b$ group, the group of affine transformations of the line.
\end{abstract}
\maketitle
\tableofcontents
\section*{Introduction}
\addtocontents{toc}{\protect\setcounter{tocdepth}{-1}}
\subsection*{Motivation}

Spectral analysis of a non-selfadjoint operator was developed by M.S. Liv\v{s}ic and his collaborators in
the 1940s and the 1950s, see the pioneering paper \cite{Liv1946} and the survey  \cite{BrLiv},
as well as the book \cite{Br}.
In their work, one embeds (implicitly or explicitly) a non-selfadjoint operator $A$ on a Hilbert space $\cH$ into a somewhat more
complicated structure called an operator colligation (or an operator node). An operator colligation is a collection of spaces and operators $\left(A, \cH, \cE, \Phi, \sigma \right)$. Here $\cE$ is an auxiliary Hilbert space, $\Phi \colon \cH \to \cE$ is a bounded linear operator and $\sigma$ is a bound selfadjoint operator on $\cE$ that satisfies the so-called colligation condition:
\[
A - A^* = i \Phi^* \sigma \Phi.
\]
One then associates to this operator colligation an operator-valued function of a complex variable
--- the characteristic function of the colligation --- which is
holomorphic outside of the spectrum of $A$:
\[
S(z) = I - i \Phi \left( A - zI\right)^{-1} \Phi^* \sigma.
\]
This function is obtained from the following time invariant i/s/o system associated to the colligation:
\begin{equation*}
\begin{split}
& i f^{\prime}(t) + A f(t) = \Phi^* \sigma u(t), \\
& y(t) = u(t) - i \Phi f(t).
\end{split}
\end{equation*}
Here $f \colon \RR \to \cH$ is an absolutely continuous function and $u,y \colon \RR \to \cE$ are square-integrable functions. In this paper we will be dealing with representations of Lie algebras. It is more natural in this setting to consider non-skewselfadjoint operators rather than non-selfadjoint operators. Therefore the colligation condition in Definition \ref{vessel-def} is modified accordingly. One can pass from one setting to the other by replacing the representation $\rho$ with $\frac{1}{i} \rho$.

Such systems possess a frequency domain theory, namely we can pass to waves using the substitution $u(t) = e^{i z t} u_0$ and $f(t) = e^{i z t} f_0$, where $u_0 \in \cE$ and $f_0 \in \cH$ are fixed vectors. This shows that the characteristic function is in fact the transfer function of the frequency domain system obtained. See also the work of de Branges--Rovnyak \cite{dBR1,dBR2} and of Sz. Nagy--Foias \cite{NF}. 
Multiplicative decompositions of the characteristic function are closely related to invariant subspaces of $A$.
An early success of this approach was the proof that
a quasi-nilpotent dissipative completely non-selfadjoint operator 
with a finite dimensional imaginary part, acting on a separable Hilbert space, 
is unitarily equivalent to a Voltera-type operator
(\cite{Liv1946} in the case of a one-dimensional imaginary part
and the discussion preceding Theorem 16 in \cite{BrLiv}).

Starting with the work \cite{Liv78,Liv79inv,Liv79wav}  of M. S. Liv\v{s}ic (see also \cite{LivYan}),
these methods were generalized to the case of several, say $d$, commuting operators;
see the book \cite{LKMV} and the papers \cite{V1,V3,BV}.
The role of an operator colligation is taken over by a commutative $d$-operator vessel which is associated to a conservative overdetermined linear input/state/output system on ${\mathbb R}^d$.
This system is equipped with compatibility conditions for the input and output signals, and the frequency domain analysis leads to the transfer function, called the joint characteristic function of the vessel, which is a map of certain sheaves on the so called discriminant variety of the vessel holomorphic outside of the joint spectrum of the operators. 
In particular, in \cite{V1} and \cite{V3} (see also \cite[Ch.\ 10--12]{LKMV}), the second author constructed triangular models for pairs of commuting operators 
with finite dimensional imaginary parts, effectively solving the inverse problem for commutative two-operator vessels.

In this paper we propose to generalize and apply the classical methods described above to the study of non skew-selfadjoint representations of real finite dimensional Lie algebras. To this end we generalize the notion of a colligation/vessel to the non-commutative setting and develop the theory of associated shift invariant conservative systems on the associated simply connected Lie group. We then develop the analogs of frequency domain theory and characteristic function. We then demonstrate an application of this theory to non skew-selfadjoint representations of the Lie algebra of the $ax+b$-group.

The time-dependent and/or non-commutative cases were also considered
by H. Gauchman \cite{G1,G2,G3,G4,G5} in a very general setting of connections on Hilbert bundles on smooth manifold, with an application to Lie groups and their Lie algebras,
by M. S. Liv\v{s}ic \cite{Liv01}, and by 
D. Alpay, A. Melnikov and the second author \cite{AMV2009,Mel2011};
see also \cite{LivVak74,Waks} and \cite[Ch.\ 2]{LKMV}.

\subsection*{Structure of the Paper}

In Section \ref{sec:vessel} we will lay down the foundations of the theory. We will define the Lie algebra operator vessel in Definition \ref{vessel-def} and show in Proposition \ref{prop-rep_embedding} that every representation of a finite dimensional Lie algebra can be embedded in a vessel. We will discuss various forms of equivalences of vessels to be used latter on. Then in Subsection \ref{subsec:system} we will define the associated shift-invariant system on the simply connected Lie group corresponding to our Lie algebra. We show that the system is conservative and define the adjoint system and an adjoint vessel. In Subsection \ref{subsec:compat} we note that the system we have defined is over-determined and thus requires compatibility conditions in the input and in the output. We prove on Proposition \ref{exit&uniq:systemeq_solut} that the system admits a unique solution for every admissible input and that the output satisfies the output conditions. We also tie these condition and the system to the continuous cohomology of the Lie group and the Lie algebra. Lastly, in Subsection \ref{subsec:principal} we define the principal subspace of the vessel and relate it to classical notion of the principal subspace of a colligation. We define the notion of minimality of a vessel and show in Theorem \ref{jft-determines-vessel} that under certain conditions, the classical characteristic functions can detect isomorphisms of representations of a Lie algebra.

In Section \ref{sec:freq} we develop the frequency domain analysis using representation theory of Lie groups. We replace the classical waves with irreducible representations of the simply connected Lie group. We then construct the frequency domain input and output compatibility conditions. We define the joint characteristic function mapping the admissible inputs to the admissible outputs and show that the existence of this function is connected to the Taylor joint spectrum of the given representation of the Lie algebra. We prove in Theorem \ref{prop-ad_nilpotent_independence} that if there exists an $ad$-nilpotent element $X$ in the complexification of our Lie algebra, such that a the image of $X$ under a certain tensor representation is invertible, then the joint characteristic function is defined and we have a concrete formula for it. It is classically known that in every complex Lie algebra there exist $ad$-nilpotent elements (see for example \cite{BenIsa77}), therefore this assumption is in fact only on the tensor representation. We use $ad$-nilpotent elements for technical reasons and because the Lie algebras that we are interested have an abundance of $ad$-nilpotent elements. For other uses of $ad$-nilpotent elements, see for example \cite{Seg67}.

Section \ref{sec:ax+b} contains a detailed example of the simplest non-abelian case of the Lie algebra of the $ax+b$-group, the group of orientation preserving affine transformations of the real line. We show that given the external data, the vessel is defined uniquely by the joint characteristic function which is a monodromy preserving mapping between solution spaces of two systems of singular differential equations in the complex plane (provided certain minimality assumptions hold). We also show in Theorem \ref{thm:classification_ax+b} that the joint characteristic function is an invariant of minimal $ax+b$ vessels with the same external data. The last theorem is an application of the methods developed in the paper for representations of the $ax+b$ group. 
{\em Assume that we are given two operators $A_1$ and $A_2$ on a Hilbert space $\cH$, such that $A_2$ is completely non skew-selfadjoint with finite dimensional real part;
assume that $(A_1 + A_1^*) \cH \subset (A_2 + A_2^*) \cH$ and $[A_1,A_2] = A_2$; then $\dim \cH < \infty$}. A reader that is mostly interested in the results of this section can skip Section \ref{sec:freq}, provided that he is willing to accept the singular ODEs used throughout Section \ref{sec:ax+b}. 

The first author thanks Eitan Sayag for fruitful conversations regarding various aspects of representation theory.

\addtocontents{toc}{\protect\setcounter{tocdepth}{10}}

\section{Lie Algebra Operator Vessels} \label{sec:vessel}

\subsection{Definitions and Basic Results} \label{subsec:defs}

In this section we will introduce the main objects of study of this paper, namely the Lie algebra operator vessels and their associated systems of differential equations. Vessels were defined for two time-dependent operators in \cite[Sec.\ 2.2]{Liv1} and as second-order connection colligation \cite[Def.\ 3.1]{G2} and in particular \cite[Ex.\ 3.5]{G2}.

A few notations are in order. Let $\cH$ and $\cE$ be two Hilbert spaces, we will denote by $\mathcal{L}(\cH,\cE)$ the space of all bounded linear operators from $\cH$ to $\cE$. For simplicity we will denote $\cL(\cH,\cH)$ by $\cL(\cH)$.

We will denote Lie groups and their Lie algebras by capital and lowercase German letters, respectively. We fix $\fg$, a finite dimensional Lie algebra over $\RR$, and $\fG$, a simply connected Lie group corresponding to $\fg$. We will denote the Lie bracket on $\fg$ by $[\cdot,\cdot]$. We will denote as well by $\wedge^2 \fg = \fg \wedge \fg$ the second exterior power of $\fg$ and by $X \wedge Y$ the decomposable tensors.

\begin{dfn} \label{vessel-def}
A $\fg$-operator vessel is a collection:
\begin{equation} \label{eq:vessel}
\fV = \left(\cH,\cE,\gr,\Phi,\gs,\gg,\ggst\right).
\end{equation}
Here $\cH$ and $\cE$  are Hilbert spaces,
$\Phi \in \cL(\cH,\cE)$, and $\gr \colon \fg \to \cL(\cH)$,
$\gs \colon \fg \to \cL(\cE)$ and 
$\gg, \ggst \colon \bigwedge^2(\fg) \to \cL(\cE)$ are linear mappings such that the following hold:
\begin{itemize}
\item
$\gr$ is a representation of $\fg$ (see for example \cite{Nel59}),
i.e., for all $X,Y \in \fg$
\begin{equation} \label{eq:comm}
\gr([X,Y]).= [\gr(X),\gr(Y)] = \rho(X)\rho(Y) - \rho(Y)\rho(X).
\end{equation}
\item
For all $X \in \fg$, $\gs(X)^* = \gs(X)$,
and for all $X,Y \in \fg$,
\begin{align} \label{eq:2nd}
\begin{split}
& \gg(X \wedge Y) + \gg(X \wedge Y)^* +
\gs([X,Y]) = 0, \\
& \ggst(X \wedge Y) + \ggst(X \wedge Y)^* - \gs([X,Y]) = 0.
\end{split}
\end{align}
\item
The colligation condition: for all $X \in \fg$
\begin{equation} \label{eq:coll}
\gr(X) + \gr(X)^* = \Phi^* \gs(X) \Phi.
\end{equation}
\item
The input vessel condition: for all $X,Y \in \fg$
\begin{equation} \label{eq:input}
\gs(X) \Phi \gr(Y)^* - \gs(Y) \Phi \gr(X)^* = \gg(X \wedge Y) \Phi.
\end{equation}
\item
The output vessel condition: for all $X,Y \in \fg$
\begin{equation} \label{eq:output}
\gs(X) \Phi \gr(Y) - \gs(Y) \Phi \gr(X) = \ggst(X \wedge Y) \Phi.
\end{equation}
\item
The linkage condition: for all $X,Y \in \fg$
\begin{equation} \label{eq:linkage}
\ggst(X \wedge Y) + \gg(X \wedge Y) = \gs(X) \Phi \Phi^* \gs(Y) - \gs(Y) \Phi \Phi^* \gs(X).
\end{equation}
\end{itemize}

Alternatively, let $X_1,\ldots,X_l$ be a basis of $\fg$ and 
let $c_{kj}^m$ be the corresponding structure constants:
\begin{equation*}
[X_k,X_j] = \sum_m c_{kj}^m X_m.
\end{equation*}
Setting $A_k = \gr(X_k)$, $\gs_k = \gs(X_k)$, 
$\gg_{kj} = \gg(X_k \wedge X_j)$, $\ggst_{kj} = \ggst(X_k \wedge X_j)$,
we obtain the various vessel conditions in the basis dependent form:
\begin{align}
\label{eq:comm-basis}
& [A_k,A_j] = \sum_m c_{kj}^m A_m,\\
\label{eq:2nd-basis}
& \gg_{jk} + \gg^*_{jk} + \sum_m c_{jk}^m \gs_m = \ggst_{jk} + \ggst^*_{jk} - \sum_m c_{jk}^m \gs_m = 0,\\
\label{eq:coll-basis}
& A_k + A_k^* = \Phi^* \gs_k \Phi,\\
\label{eq:input-basis}
& \gs_k \Phi A_j^* - \gs_j \Phi A_k^* = \gg_{kj} \Phi,\\
\label{eq:output-basis}
& \gs_k \Phi A_j - \gs_j \Phi A_k = \ggst_{kj} \Phi,\\
\label{eq:linkage-basis}
& \ggst_{kj} + \gg_{kj} = \gs_k \Phi \Phi^* \gs_j - \gs_j \Phi \Phi^* \gs_k.
\end{align}
\end{dfn}

\begin{ex} \label{ex-commutative}
Let $\fg =\mathbb{R}^l$ and $\fG = \mathbb{R}^l$ then in particular the Lie bracket on $\fg$ is trivial. Hence if we consider the vessel conditions as described above we obtain that the structure constants are $c_{k j}^m = 0$. If we plug that into (\ref{eq:comm-basis})-(\ref{eq:2nd-basis}), we get:
\begin{align*}
\begin{split}
& [A_k,A_j] = 0 \\
& \gamma_{j k} = - \gamma_{j k}^* \\
& \gamma_{* j k} = - \gamma_{* j k}^* \\
\end{split}
\end{align*}
Which is exactly the fact that all of the $A_k$ commute with each other 
and that $\gamma_{j k}$ and $\gamma_{* j k}$ are selfadjoint. 
Thus we obtain that our conditions are exactly conditions \cite[(1.31) -- (1.34)]{V1}.
\end{ex}

\begin{ex} \label{ex:ax+b-vessel}
Let $\fg$ be the Lie algebra of the group of affine transformations of the real line. 
It is the only (up to isomorphism) non-commutative Lie algebra of dimension two. 
The algebra $\fg$ can be identified with the Lie subalgebra of $\fg \mathfrak{l}_2$:
\begin{equation*}
\fg \equiv \{ \gmatrix{p}{q} \colon p,q \in \mathbb{R} \}.
\end{equation*}

We can choose a basis, $\{X_1,X_2\}$, for $\fg$, such that $[X_1,X_2] = X_2$. 
Let $A_j = \rho(X_j)$, $\sigma_j = \sigma(X_j)$, $\gamma = \gamma(X_1 \wedge X_2)$ 
and $\gamma_* = \gamma_*(X_1 \wedge X_2)$. Then the conditions \eqref{eq:comm-basis} and \eqref{eq:2nd-basis} are:
\begin{align*}
\begin{split}
& [A_1, A_2] = A_2, \\
& \gamma + \gamma^* + \sigma_2 = 0, \\
& \gamma_* + \gamma_*^* - \sigma_2 = 0.
\end{split}
\end{align*}
\end{ex}

\begin{ex} \label{ex:heisenberg-vessel}
Let $\fg$ be the Heisenberg Lie algebra, i.e., the Lie algebra of the Heisenberg group, $H_n$. Then $\dim \fg = 2n + 1$ and we can choose a basis $\{X_1, \ldots, X_n, Y_1, \ldots Y_n, Z\}$, for $\fg$, such that:
\begin{align*}
\begin{split}
& [X_j, X_k] = [Y_j, Y_k] = 0, \mbox{ } 1 \leq j,k \leq n, \\
& [X_j,Y_k] = \left\{\begin{array}{ll} Z, & j = k \\ 0, & \mbox{else}\end{array}\right.,\\
& [X_j,Z] = [Y_j,Z] = 0, \mbox{ } 1 \leq j \leq n.
\end{split}
\end{align*}
The condition on $\rho$ is clear from those commutation relations. Furthermore we get that the values of $\gamma$ and $\gamma_*$ on all of the vectors of the associated basis for the exterior product are skew-selfadjoint, except for $\gamma(X_k \wedge Y_k)$ and $\gamma_*(X_k \wedge Y_k)$, for $1 \leq k \leq n$. For them we have:
\begin{equation*}
\gamma(X_k \wedge Y_k) + \gamma(X_k \wedge Y_k)^* + \sigma(Z) = \gamma_*(X_k \wedge Y_k) + \gamma_*(X_k \wedge Y_k)^* - \sigma(Z) = 0.
\end{equation*}

\end{ex}

\begin{dfn} \label{vessel-strict}
A Lie algebra operator vessel is called strict if: 
\begin{itemize}
\item $\Phi \cH = \cE$
\item $\bigcap_{X \in \fg} \ker \sigma(X) = 0$
\end{itemize}
\end{dfn}

\begin{dfn} \label{vessel-equiv}
Given two $\fg$-operator vessels with the same external data, $\fV_1 = \left(\cH_1,\cE,\rho_1,\Phi_1,\gs,\gg,\ggst\right)$ and $\fV_2 = \left(\cH_2,\cE,\rho_2,\Phi_2,\gs,\gg,\ggst\right)$, we say that $\fV_1$ is unitarily equivalent to $\fV_2$, if there exists an isometric isomorphism $U\colon \cH_1 \to \cH_2$, such that:
\begin{align*}
\begin{split}
& U \rho_1 U^{-1} = \rho_2 \\
& \Phi_2 U = \Phi_1
\end{split}
\end{align*}
\end{dfn}

\begin{dfn}
Given two $\fg$-operator vessels $\fV_1 = \left(\cH,\cE_1,\rho,\Phi_1,\gs_1,\gg_1,\gg_{*1}\right)$ and $\fV_2 = \left(\cH,\cE_2,\rho,\Phi_2,\gs_2,\gg_2,\gg_{*2}\right)$, with the same $\rho$, we say that $\fV_1$ is input/output equivalent to $\fV_2$ if there exists an isomorphism $T \colon \cE_1 \to \cE_2$, such that:
\begin{align*}
\begin{split}
& \sigma_1 = T^* \sigma_2 T, \\
& \gg_1 = T^* \gg_2 T, \\
& \gg_{1*} = T^* \gg_{*2} T,\\
& \Phi_2 = \Phi_1 T.
\end{split}
\end{align*}
\end{dfn}

The following proposition provides us with a constructive way of embedding a representation of a Lie algebra in a Lie algebra operator vessel.
\begin{prop} \label{prop-rep_embedding}
Given a representation $\gr \colon \fg \to \cL(\cH)$ of $\fg$, we define: 
\begin{align*}
\begin{split}
&\cE = \sum_{X \in \fg} (\gr(X) + \gr(X)^*)\cH, \\
&\Phi = P_{\cE}, \\
&\sigma(X) = \left(\gr(X) + \gr(X)^*\right)|_{\cE}, \\
&\gamma(X\wedge Y) =  \left(\gr(X)\gr(Y)^* - \gr(Y)\gr(X)^* - \gr([X,Y])^*\right)|_{\cE}, \\
&\gamma_{*}(X\wedge Y) = \left(\gr(X)^*\gr(Y) - \gr(Y)^*\gr(X) + \gr([X,Y])\right)|_{\cE}.
\end{split}
\end{align*}
Then the collection $\fV = \left(\cH,\cE,\gr,\Phi,\gs,\gg,\ggst\right)$ is a strict Lie algebra operator vessel.
\end{prop}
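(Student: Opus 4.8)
The plan is to check the seven vessel identities and the two strictness conditions directly, organized around one bookkeeping device. Throughout I read the hypothesis that $\gr$ is a representation as condition \eqref{eq:comm}, $[\gr(X),\gr(Y)]=i\gr([X,Y])$, and I take $\cE$ to be the closure of $\sum_{X}(\gr(X)-\gr(X)^*)\cH$, so that $\Phi=P_\cE$ is a genuine orthogonal projection onto $\cE$, with $\Phi^*$ the inclusion $\cE\hookrightarrow\cH$, $\Phi\Phi^*=I_\cE$ and $\Phi^*\Phi=P_\cE$. For each $X,Y$ I introduce the ambient operators $\widetilde{\gs}(X),\widetilde{\gg}(X\wedge Y),\widetilde{\ggst}(X\wedge Y)\in\cL(\cH)$ defined by exactly the formulas of the proposition but with the restriction $|_\cE$ dropped, so that $\gs(X)=\widetilde{\gs}(X)|_\cE$ and likewise for $\gg,\ggst$; a one-line antisymmetry check in $(X,Y)$ shows $\widetilde{\gg},\widetilde{\ggst}$, hence $\gg,\ggst$, descend to well-defined linear maps on $\bigwedge^2\fg$.

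The first step I would carry out is the structural identity $\cE^\perp=\bigcap_{X\in\fg}\ker\widetilde{\gs}(X)$, which holds because $(\gr(X)-\gr(X)^*)$ is skew-adjoint so $(\operatorname{ran}(\gr(X)-\gr(X)^*))^\perp=\ker(\gr(X)-\gr(X)^*)$. This yields strictness at once: $\Phi\cH=P_\cE\cH=\cE$, and $\bigcap_X\ker\gs(X)=\cE\cap\cE^\perp=0$. It also shows each $\widetilde{\gs}(X)$ is self-adjoint with range in $\cE$, hence annihilates $\cE^\perp$ and leaves $\cE$ reducing; from this, $\gs(X)$ is self-adjoint and, using $\widetilde{\gs}(X)P_\cE=\widetilde{\gs}(X)$, the colligation condition \eqref{eq:coll} is immediate.

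For the remaining four conditions the strategy is to reduce each to two elementary facts: an operator identity that already holds between the ambient operators on all of $\cH$, and the statement that the relevant ambient operator kills $\cE^\perp$ (so that inserting $P_\cE=\Phi^*\Phi$ is harmless). Expanding and using the adjoint of \eqref{eq:comm}, namely $[\gr(X)^*,\gr(Y)^*]=i\gr([X,Y])^*$, gives on $\cH$ the identities $\widetilde{\gs}(X)\gr(Y)^*-\widetilde{\gs}(Y)\gr(X)^*=\widetilde{\gg}(X\wedge Y)$ and $\widetilde{\gs}(X)\gr(Y)-\widetilde{\gs}(Y)\gr(X)=\widetilde{\ggst}(X\wedge Y)$; since $\gs(X)\Phi=\widetilde{\gs}(X)$, these become \eqref{eq:input} and \eqref{eq:output} once $\widetilde{\gg},\widetilde{\ggst}$ are shown to annihilate $\cE^\perp$. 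On $v\in\cE^\perp$ one has $\gr(Z)^*v=\gr(Z)v$ for every $Z$, so $\widetilde{\gg}(X\wedge Y)v=0$ follows directly from \eqref{eq:comm}, while for $\widetilde{\ggst}$ the same substitution produces the bracket $[\gr(X)^*,\gr(Y)^*]$ and one closes the argument using $[X,Y]\in\fg$, i.e. $\cE^\perp\subseteq\ker\widetilde{\gs}([X,Y])$. Condition \eqref{eq:2nd} I get by computing $\widetilde{\gg}-\widetilde{\gg}^*$ and $\widetilde{\ggst}-\widetilde{\ggst}^*$, both of which collapse to $i\widetilde{\gs}([X,Y])$; and \eqref{eq:linkage} follows from the expansion $\widetilde{\ggst}-\widetilde{\gg}=i[\widetilde{\gs}(X),\widetilde{\gs}(Y)]$ on $\cH$, whose right-hand side matches $i(\gs(X)\Phi\Phi^*\gs(Y)-\gs(Y)\Phi\Phi^*\gs(X))$ because $\Phi\Phi^*=I_\cE$.

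The routine part is the four bilinear block computations, which reduce to cancellation after applying \eqref{eq:comm} and its adjoint. The step I expect to require the most care is the passage from the ambient identities to the restricted ones in \eqref{eq:2nd} and \eqref{eq:linkage}: since $\cE$ is $\widetilde{\gg}$-invariant but a priori only reducing for the self-adjoint $\widetilde{\gs}(X)$, one cannot treat the adjoint of $\gg=\widetilde{\gg}|_\cE$ as $\widetilde{\gg}^*|_\cE$, and must instead use $(\,\widetilde{\gg}|_\cE)^*=P_\cE\widetilde{\gg}^*|_\cE$ together with $\operatorname{ran}\widetilde{\gs}([X,Y])\subseteq\cE$ to land exactly on $i\gs([X,Y])$; for \eqref{eq:linkage} one similarly uses that restriction to the invariant subspace $\cE$ is multiplicative. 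A minor preliminary point is the well-definedness of $P_\cE$, handled by taking $\cE$ to be the closed sum, which leaves $\cE^\perp$ and hence all of the above unchanged.
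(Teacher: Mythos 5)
Your proof is correct; every identity you assert checks out, and the algebraic core (expanding the defining formulas and invoking $[\gr(X),\gr(Y)]=i\gr([X,Y])$ together with its adjoint) is the same as in the paper. The packaging, however, is genuinely different, and worth comparing. The paper works directly with the restricted operators: it verifies \eqref{eq:coll}, \eqref{eq:2nd}, \eqref{eq:input}, \eqref{eq:output}, \eqref{eq:linkage} by brute-force expansion, and proves strictness by a separate pairing argument (if $e\in\bigcap_X\ker\gs(X)$ then $\langle \gs(X)e,f\rangle=\langle e,\gs(X)f\rangle=0$ shows $e$ is orthogonal to $\sum_X \gs(X)\cE$, which is dense in $\cE$). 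You instead factor everything through the ambient (unrestricted) operators on $\cH$ and the single structural identity $\cE^{\perp}=\bigcap_{X\in\fg}\ker\bigl(\gr(X)-\gr(X)^*\bigr)$, from which strictness is immediate and each vessel condition splits into an identity valid on all of $\cH$ plus the statement that the relevant ambient operator annihilates $\cE^{\perp}$. What your route buys is rigor exactly where the proposition has its only real subtleties, which the paper passes over silently: that $\cE$ must be taken closed for $P_\cE$ to make sense; that $\gg(X\wedge Y)$ and $\ggst(X\wedge Y)$ do map $\cE$ into $\cE$ (so they are honestly in $\cL(\cE)$), which follows from your ambient identities expressing them as $\widetilde{\gs}(X)\gr(Y)^*-\widetilde{\gs}(Y)\gr(X)^*$ and $\widetilde{\gs}(X)\gr(Y)-\widetilde{\gs}(Y)\gr(X)$; that writing $\gg(X\wedge Y)\Phi$ for the ambient expression requires the ambient operator to kill $\cE^{\perp}$; and that in \eqref{eq:2nd} the adjoint of a restriction is only the compression of the adjoint, an issue you resolve correctly via $\operatorname{ran}\widetilde{\gs}([X,Y])\subseteq\cE$. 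What the paper's route buys is brevity: it reaches the same conclusions in a few displayed computations, at the cost of leaving these bookkeeping points implicit.
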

\begin{proof}
Obviously $\sigma$ and $\gamma$ are linear mapping from $\fg$ and $\wedge^2 \fg$, respectively, into $\cL(\cE)$. Now we must prove the vessel conditions. Quite clearly $\sigma$ is selfadjoint. Now the colligation condition follows immediately from the definitions and (\ref{eq:2nd}) follows from the fact that:
\begin{align*}
\begin{split}
& \gamma(X\wedge Y) + \gamma(X\wedge Y)^* = - \gr([X,Y])^* - \gr([X,Y]))|_{\cE} \\
& \gamma_{*}(X\wedge Y) + \gamma_{*}(X\wedge Y)^* = (\gr([X,Y]) + \gr([X,Y])^*)|_{\cE}
\end{split}
\end{align*} 
The input/output compatibility conditions follow easily as well, for example in the input case:
\begin{equation*}
\begin{split}
& \sigma(X)\Phi\gr(Y)^* - \sigma(Y)\Phi\gr(X)^* = \\ &
(\gr(X) + \gr(X)^*)\gr(Y)^* - (\gr(Y) + \gr(Y)^*)\gr(X)^* =\\ & (\gr(X)\gr(Y)^* + \gr(X)^*\gr(Y)^* - \gr(Y)\gr(X)^* - \gr(Y)^*\gr(X)^*) = \\ &
\gr(X)\gr(Y)^* - \gr(Y)\gr(X)^*) + \gr([X,Y])^* = \gamma(X\wedge Y)\Phi.
\end{split}
\end{equation*}
For the first equality we note that $\Phi^*$ is simply the embedding of $\cE$ in $\cH$.

As for the linkage condition we compute:
\begin{multline*}
\gs(X) \Phi \Phi^* \gs(Y) - \gs(Y) \Phi \Phi^* \gs(X) = (\gr(X) + \gr(X)^*)(\gr(Y) + \gr(Y)^*) - \\ (\gr(Y) + \gr(Y)^*)(\gr(X) + \gr(X)^*) =  \gr([X,Y]) + \gr(X)^* \gr(Y) - \gr(Y)^*\gr(X)  - \\ \gr([X,Y])^* + \gr(X) \gr(Y)^* - \gr(Y) \gr(X)^* = \gamma_*(X \wedge Y) + \gamma(X \wedge Y).
\end{multline*}
To see that the above vessel is strict we note that $\Phi$ is surjective by definition. Furthermore note that $\sum_{X \in \fg} \sigma(X)\cE = \cE$. If $e \in \bigcap_{X \in \fg} \ker \sigma(X)$, then for every $f \in \cE$ and $X \in \fg$ we have that:
\begin{equation*}
0 = \langle \sigma(X) e,f \rangle = \langle e, \sigma(X) f \rangle
\end{equation*}
Hence $e$ is orthogonal to all of $\cE$ and therefore $e = 0$.
\end{proof}

\begin{rem}
As one can see the input/output space of the above construction is the non-skewhermitian subspace of the operators $A_j = \rho(X_j)$, $j = 1,\ldots,l$.
\end{rem}

The following proposition shows that the above construction is the unique construction of a strict vessel for a given representation of $\fg$ up to input/output equivalence. Similar result has been proved in \cite[Prop.\ 8.1.1]{LKMV} in the case of strict colligations on Banach spaces.

\begin{prop}
Let $\fV = \left(\cH,\cE,\gr,\Phi,\gs,\gg,\ggst\right)$ be a strict vessel, then there exists an isomorphism of $\cE$ with the non-skewhermitian subspace of the representation, such that $\fV$ is input/output equivalent to the vessel constructed above.
\end{prop}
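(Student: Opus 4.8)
\emph{The plan.} Both $\fV$ and the vessel $\fV'=(\cH,\cE',\gr,\Phi',\gs',\gg',\ggst')$ constructed in Proposition~\ref{prop-rep_embedding} live on the same $\cH$ with the same representation $\gr$; here $\cE'=\sum_{X\in\fg}(\gr(X)-\gr(X)^*)\cH$ is the non-hermitian subspace, $\Phi'=P_{\cE'}$, and $\gs',\gg',\ggst'$ are the restrictions written there. I would manufacture the isomorphism directly out of $\Phi$, the key point being the identity
\begin{equation*}
(\cE')^{\perp}=\ker\Phi .
\end{equation*}
Granting it, $\Phi$ carries $\cE'$ bijectively onto $\cE$, and the isomorphism identifying $\cE$ with the non-hermitian subspace is $T:=\Phi|_{\cE'}\colon\cE'\to\cE$ (or, to match the phrasing of the statement, its inverse, since input/output equivalence is plainly symmetric).

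\emph{The main step.} To prove the displayed identity I would first note $(\cE')^{\perp}=\bigcap_{X\in\fg}\ker\!\big(\gr(X)-\gr(X)^*\big)$, since taking adjoints turns the sum of the ranges into the intersection of the kernels. Rewriting $\gr(X)-\gr(X)^*=i\,\Phi^*\gs(X)\Phi$ via the colligation condition \eqref{eq:coll}, and using that $\Phi^*$ is injective because $\Phi$ is onto, gives $\ker\!\big(\gr(X)-\gr(X)^*\big)=\ker\!\big(\gs(X)\Phi\big)$, whence $(\cE')^{\perp}=\{h:\Phi h\in\bigcap_{X}\ker\gs(X)\}$, which by strictness ($\bigcap_X\ker\gs(X)=0$) is exactly $\ker\Phi$. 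This is where both strictness hypotheses are consumed, and it is the only step I expect to offer any resistance. Consequently $T=\Phi|_{\cE'}$ is injective (its kernel is $\cE'\cap(\cE')^{\perp}=0$) and, because $\cH=\cE'\oplus\ker\Phi$ and $\Phi\cH=\cE$, it is onto $\cE$; the open mapping theorem then makes it a bounded isomorphism. The one analytic subtlety is that $\cE'$ must be read as the closed span of the ranges, which does not affect the complement computation.

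\emph{Verifying input/output equivalence.} It remains to check, for $T=\Phi|_{\cE'}$, the four relations $\gs'=T^*\gs T$, $\gg'=T^*\gg T$, $\ggst'=T^*\ggst T$, and $\Phi=T\Phi'$. Since $(\cE')^{\perp}=\ker\Phi$ we have $\Phi P_{\cE'}=\Phi$, so $T\Phi'=\Phi|_{\cE'}P_{\cE'}=\Phi$. Because $\operatorname{range}\Phi^*=(\ker\Phi)^{\perp}=\cE'$, the adjoint $T^*$ is $\Phi^*$ regarded as a map $\cE\to\cE'$; hence for $e\in\cE'$ the colligation condition gives $T^*\gs(X)Te=\Phi^*\gs(X)\Phi e=\frac1i(\gr(X)-\gr(X)^*)e=\gs'(X)e$ at once. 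For the last two I would feed the input condition \eqref{eq:input} into $T^*\gg(X\wedge Y)Te=\Phi^*\gg(X\wedge Y)\Phi e$ and the output condition \eqref{eq:output} into the analogous expression for $\ggst$, then simplify using \eqref{eq:coll} and the commutation relation \eqref{eq:comm}. The resulting operator identities are precisely those appearing in the proof of Proposition~\ref{prop-rep_embedding}, now read in reverse, and they yield $T^*\gg T=\gg'$ and $T^*\ggst T=\ggst'$ on $\cE'$. Passing to $T^{-1}$ gives the statement as phrased, with $\cE$ identified with the non-hermitian subspace $\cE'$.
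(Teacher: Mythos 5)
Your proof is correct and follows essentially the same route as the paper's: both hinge on the identity $(\cE')^{\perp}=\ker\Phi$ (obtained from the colligation condition plus the two strictness hypotheses), conclude that $T=\Phi|_{\cE'}$ is an isomorphism, and then verify the input/output equivalence by pushing the colligation, input, and output vessel conditions through $T^*(\cdot)T=\Phi^*(\cdot)\Phi$. The only cosmetic differences are that you establish the key identity in a single chain of kernel equalities (the paper proves the two inclusions separately) and that you make the boundedness of $T^{-1}$ and the identification $T^*=\Phi^*$ explicit.
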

\begin{proof}
Let $\cE^{\prime} = \sum_{X \in \fg} (\gr(X) + \gr(X)^*)\cH$. If $f \in (\Phi^* \cE)^{\perp}$, we get that for every $X \in \fg$ and $g \cH$, we have: 
\begin{equation*}
\langle (\rho(X) + \rho(X)^*) f , g \rangle = i \langle \Phi^* \sigma(X) \Phi f, g \rangle = 0.
\end{equation*}
Hence in particular $f \in \cE^{\prime\perp}$. We conclude that $\ker \Phi = (\Phi^* \cE)^{\perp} \subseteq \cE^{\prime\perp}$. Therefore $\Phi$ is injective when restricted to $\cE^{\prime}$. 

Let $f \in \cE^{\prime\perp}$, then for every $X \in \fg$, $f \in \ker (\rho(X) + \rho(X)^*)$. To see it note that for every $g \in \cH$, we have:
\begin{equation*}
0 = \langle f , (\rho(X) + \rho(X)^*) g \rangle = \langle (\rho(X) + \rho(X)^*) f , g \rangle.
\end{equation*}
Hence $f \in \cap_{X \in \fg} \ker \Phi^* \sigma(X) \Phi$. Now since the vessel is strict $\Phi^*$ is injective and $\cap_{X \in \fg} \ker \sigma(X) = 0$. Therefore $f \in \ker \Phi$. We conclude that $\ker \Phi = \cE^{\prime\perp}$. Hence the restriction of $\Phi$ to $\cE^{\prime}$ is surjective.

We have proved that $T = \Phi|_{\cE^{\prime}}$ is an isomorphism. Let $\sigma^{\prime}(X) = (\gr(X) + \gr(X)^*)|_{\cE^{\prime}}$, then by the vessel conditions we have $T^* \sigma T = \sigma^{\prime}$. Consider the input vessel condition, \eqref{eq:input}, restricted to $\cE^{\prime}$ and apply $T^*$ to it to get:
\begin{equation*}
\begin{split}
T^* \gamma(X \wedge Y) T = (T^* \gs(X) \Phi \gr(Y)^* - T^* \gs(Y) \Phi \gr(X)^*)|_{\cE^{\prime}} \\
= ((\rho(X) + \rho(X)^*) \rho(Y)^* - (\rho(Y) + \rho(Y)^*) \rho(X)^*)|_{\cE^{\prime}} \\
= (\rho(X)\rho(Y)^* - \rho(Y)\rho(X)^* - \rho([X,Y])^*)|_{\cE^{\prime}}.
\end{split}
\end{equation*}
Here in the last equality we used \eqref{eq:comm}. Note that this is precisely the definition of $\gamma^{\prime}$ in the proposition above. Using \eqref{eq:output} one derives the same result for $\gamma_*$ and we are done.
\end{proof}

\begin{rem}
One can see from the proof that it is always the case that $\ker \Phi \subseteq \cE^{\prime}$. Hence if the image of $\Phi$ is closed (which is the case, for example, when $\dim \cE < \infty$), then one can write $\cE = \cE^0 \oplus \cE^{\dagger}$, where $\cE^{\dagger} = \Phi \cH$ and $\cE^0 = \ker \Phi^*$. Therefore in this case the vessel is an extension of a strict vessel.
\end{rem}

\begin{dfn} \label{dfn:puulback}
Assume that we have a $\fg$-operator vessel $\fV$ and that we have a Lie algebra map $\varphi \colon \fh \to \fg$. Then we define the pullback of $\fV$ along $\varphi$ as an $\fh$-operator vessel, given by:
\begin{equation*}
\varphi^* \fV = \left( \cH, \cE, \Phi, \rho \circ \varphi, \sigma \circ \varphi, \gamma \circ \wedge^2 \varphi, \gamma_* \circ \wedge^2 \varphi \right).
\end{equation*}
Here $\wedge^2 \varphi$ is the linear map induced by $\varphi$ on the exterior product, namely $\wedge^2 \varphi \colon \wedge^2 \fh \to \wedge^2 \fg$.
\end{dfn}

\subsection{Associated System} \label{subsec:system}
Similarly to \cite{BV}, a Lie algebra operator vessel corresponds to a left invariant linear system on the Lie group $\fG$.
\begin{dfn}
The associated system of the Lie algebra operator vessel as defined in Definition \ref{vessel-def} is the following system of differential equations on $\fG$:
\begin{equation} \label{eq:system}
\begin{split}
& X x + \gr(X) x = \Phi^* \gs(X) u,\\
& y = u - \Phi x.
\end{split}
\end{equation}
Here $x \colon \fG \to \cH$ is the state, $u \colon \fG \to \cE$ is the input, and $y \colon \fG \to \cE$ is the output, and $X$ and $Y$ are left invariant vector fields on $\fG$ (identified with elements of $\fg$).
In terms of a corresponding basis $X_1,\ldots,X_l$ for left invariant vector fields,
the system equations become:
\begin{align}
\label{eq:system-basis}
\begin{split}
& X_k(x) + A_k x = \Phi^* \gs_k u,\quad k=1,\ldots,l,\\
& y = u -  \Phi x.
\end{split}
\end{align}
\end{dfn}

\begin{ex}
Consider the commutative $l$-dimensional Lie algebra of Example \ref{ex-commutative}. Since $\fg$ is spanned by the vector fields $\pd{}{x_1},\ldots,\pd{}{x_l}$, we get the system of equations:
\begin{align*}
\begin{split}
& \pd{f}{x_k} + iA_k f = \Phi^* \sigma_k u \\
& y = u - \Phi f
\end{split}
\end{align*}
\end{ex}

\begin{ex} \label{ex:ax+b-system}
Let $\fG$, be the group of affine (orientation preserving) transformations of the real line. 
The group can be identified with a subgroup of $GL_2$ via:
\begin{equation*}
\fG \equiv \{ \Gmatrix{a}{b} \colon a \in \mathbb{R}_{> 0}, b \in \mathbb{R} \}.
\end{equation*}
The Lie algebra of this group is the Lie algebra described in Example \ref{ex:ax+b-vessel}. 
The left invariant vector fields are $X_1 = a \pda{}$ and $X_2 = a \pdb{}$.

We will use the same notations as in Example \ref{ex:ax+b-vessel} to write the system equations:
\begin{align*}
\begin{split}
& a \pda{x}(a,b) + A_1 x(a,b) = \Phi^* \sigma_1 u(a,b),\\
& a \pdb{x}(a,b) + A_2 x(a,b) = \Phi^* \sigma_2 u(a,b),\\
& y(a,b) = u(a,b) - \Phi x(a,b).
\end{split}
\end{align*}
Here $u$, $x$ and $y$ are appropriately-valued smooth functions.
\end{ex}

\begin{ex} \label{ex:heisenberg-system}
If $\fG = H_n$, the Heisenberg group of dimension $2n+1$, it is simply connected and its Lie algebra has been described in Example \ref{ex:heisenberg-vessel}. In what follows $j$ will run over the positive integers up to $n$.

The group $H_n$ can be identified with the group of matrices of the form:
\begin{equation*}
\begin{pmatrix}
1 & p_1 & \ldots & p_n & r \\
0 & 1 & 0 & \ldots & q_1 \\
\vdots & & \ddots & & \vdots \\
0 & \ldots & 0 & 1 & q_n \\
0 & \ldots & 0 & 0 & 1
\end{pmatrix}.
\end{equation*}

Then the left-invariant vector fields are: $X_j = \pd{}{p_j} - \frac{1}{2}q_j\pd{}{r}$, $Y_j = \pd{}{q_j} + \frac{1}{2}p_j\pd{}{r}$ and $Z = \pd{}{r}$. For functions on the group we will write the coordinates as $(p,q,r)$, where $p, q \in \mathbb{R}^n$ and $r \in \mathbb{R}$. The system equations are:
\begin{align*}
\begin{split}
& \pd{x}{p_j}(p,q,r) - q_j \pd{x}{r}(p,q,r) + \rho(X_j) x(p,q,r) = \Phi^* \sigma(X_j) u(p,q,r), \\
& \pd{x}{q_j}(p,q,r) - p_j \pd{x}{r}(p,q,r) + \rho(Y_j) x(p,q,r) = \Phi^* \sigma(Y_j) u(p,q,r), \\
& \pd{x}{r}(p,q,r) + \rho(Z) x(p,q,r) = \Phi^* \sigma(Z) u(p,q,r),\\
& y(p,q,r) = u(p,q,r) - \Phi x(p,q,r).
\end{split}
\end{align*}
\end{ex}

For every representation of a Lie algebra on a Hilbert space we have also the contragredient representation on the same Hilbert space, given by $\rho^*(X) = - \rho(X)^*$. Thus we are led to define the adjoint of a vessel (see \cite[Sec.\ 1.3]{BV} and \cite[Ch.\ 3.3]{LKMV} for the commutative case). 
\begin{dfn} \label{def-adjoint_vessel}
For a vessel:
\begin{equation*}
\fV = \left(\cH, \cE, \rho, \Phi, \sigma, \gamma, \gamma_*\right).
\end{equation*}
we define the adjoint veseel as:
\begin{equation*}
\fV^* = \left(\cH, \cE, -\rho^*,-\Phi,-\sigma,\gamma_{*}, \gamma\right).
\end{equation*}
\end{dfn}
We show next that $\fV^*$ is in fact a vessel. The colligation condition follows immediately from the colligation condition for $\fV$. Similarly 
the input and output vessel conditions follow from the definition. The linkage condition is immediate as well since we only interchanged $\gamma$ and $\gamma_*$.

Therefore $\fV^*$ is indeed a vessel. This vessel is called the adjoint vessel. Furthermore note that $\fV^{**} = \fV$, simply by construction.  

The left-invariant conservative linear system associated to $\fV^*$ has the form:
\begin{align} \label{eq:adj_system}
\begin{split}
& X \tilde{x} - \rho(X)^* \tilde{x} = \Phi^* \sigma(X) \tilde{u}, \\
& \tilde{y} = \tilde{u} + \Phi \tilde{x}.
\end{split}
\end{align}
We call this system the adjoint system of $\fV$.

The following proposition shows that the adjoint system is the original system with the input and output interchanged.

\begin{prop} \label{prop-adjoint_traj}
Let $(u,x,y)$ be a trajectory for the system \eqref{eq:system} associated to $\fV$. Then $(y,x,u)$ is a trajectory for the adjoint system \eqref{eq:adj_system}. 
\end{prop}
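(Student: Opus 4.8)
The plan is to verify directly that the triple $(y,x,u)$ satisfies the two equations of the adjoint system \eqref{eq:adj_system}, reading $\tilde u = y$, $\tilde x = x$ and $\tilde y = u$. There are only two things to check for every $X \in \fg$: the output relation $\tilde y = \tilde u + i\Phi\tilde x$, i.e. $u = y + i\Phi x$; and the state equation $iX\tilde x + \rho(X)^*\tilde x = \Phi^*\sigma(X)\tilde u$, i.e. $iXx + \rho(X)^* x = \Phi^*\sigma(X) y$.

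The output relation is immediate: the original output equation $y = u - i\Phi x$ rearranges to $u = y + i\Phi x$, which is exactly what is required, so nothing beyond the definition of a trajectory is used here.

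For the state equation I would start from the original state equation $iXx + \rho(X)x = \Phi^*\sigma(X)u$ in \eqref{eq:system} and compare it with the target. Subtracting the target, the terms $iXx$ cancel and it remains to show that $(\rho(X) - \rho(X)^*)x = \Phi^*\sigma(X)(u - y)$. Using the output equation once more, $u - y = i\Phi x$, so the right-hand side becomes $i\Phi^*\sigma(X)\Phi x$. The key step is now the colligation condition \eqref{eq:coll}, which states $\frac{1}{i}(\rho(X) - \rho(X)^*) = \Phi^*\sigma(X)\Phi$, equivalently $\rho(X) - \rho(X)^* = i\Phi^*\sigma(X)\Phi$; applying this to $x$ yields precisely the identity needed, completing the verification.

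Since the whole argument is a direct substitution, there is no genuine obstacle; the only point requiring care is bookkeeping of the factors of $i$ and of the sign change between the two output equations (the $-i\Phi$ of \eqref{eq:system} versus the $+i\Phi$ of \eqref{eq:adj_system}), which is exactly what makes the colligation condition match up. In particular, neither the input/output vessel conditions nor the linkage condition \eqref{eq:linkage} are needed here: only the colligation condition \eqref{eq:coll} and the definition of a trajectory are used.
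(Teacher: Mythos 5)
Your proof is correct and uses exactly the same ingredients as the paper's: the original state and output equations together with the colligation condition \eqref{eq:coll}, the only cosmetic difference being that you subtract the two state equations while the paper expands $\Phi^*\sigma(X)y = \Phi^*\sigma(X)(u - i\Phi x)$ directly. No gap; this is essentially the paper's argument.
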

\begin{proof}
We compute:
\begin{equation*}
\begin{split}
& \Phi^* \sigma(X) y = \Phi^* \sigma(X) (u - \Phi x) = X x + \rho(X) x - \Phi^* \sigma(X) \Phi x = \\
& X x + \rho(X) x - (\rho(X) + \rho(X)^*) x = X x - \rho(X)^* x.
\end{split}
\end{equation*}
Furthermore, using again the fact that $y = u -  \Phi x$ we obtain that $ u = y + \Phi x$. Hence $(y,x,u)$ is in fact a trajectory for the adjoint system.
\end{proof}

The system \eqref{eq:system} is conservative. To compute the energy conservation law of the system we need the following lemma.

\begin{lem} \label{lem-energy}
Let $(u_1,x_1,y_1)$ and $(u_2,x_2,y_2)$ be trajectories of the system. Consider the smooth function $f \colon \fG \to \mathbb{C}$, defined by $f(g) = \langle x_1(g), x_2(g) \rangle_{\cH}$. Then for an arbitrary $X \in \fg$, we have:
\begin{equation*}
X f(g) = \langle \sigma(X) y_1(g) , y_2(g) \rangle_{\cE} - \langle \sigma(X) u_1(g), u_2(g) \rangle_{\cE}.
\end{equation*}
\end{lem}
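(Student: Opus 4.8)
The plan is to compute $Xf(g)$ directly using the product rule for the left-invariant vector field $X$ acting on the inner product, then substitute the state equations from the system \eqref{eq:system} to replace $Xx_1$ and $Xx_2$, and finally simplify using the colligation condition \eqref{eq:coll}. Since $X$ is a first-order differential operator (a left-invariant vector field) and the inner product $\langle\cdot,\cdot\rangle_{\cH}$ is a bilinear-type pairing (conjugate-linear in one slot), I would begin by establishing the Leibniz rule
\begin{equation*}
Xf(g) = \langle X x_1(g), x_2(g)\rangle_{\cH} + \langle x_1(g), X x_2(g)\rangle_{\cH}.
\end{equation*}
Here one must be careful with the conjugation: because the pairing is conjugate-linear in its second argument, differentiating $\langle x_1, x_2\rangle$ produces $\langle Xx_1, x_2\rangle + \langle x_1, Xx_2\rangle$ with no stray complex conjugate on the derivative itself, since $X$ is a real vector field.

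Next I would solve the first equation of \eqref{eq:system} for the derivative term, writing $iXx_j = \Phi^*\gs(X)u_j - \gr(X)x_j$, hence $Xx_j = \frac{1}{i}\bigl(\Phi^*\gs(X)u_j - \gr(X)x_j\bigr)$. Substituting into the Leibniz expansion and using $\frac{1}{i} = -\frac{1}{i}$ under conjugation in the second slot, the two terms combine into
\begin{equation*}
Xf = \tfrac{1}{i}\langle \Phi^*\gs(X)u_1 - \gr(X)x_1,\, x_2\rangle - \tfrac{1}{i}\langle x_1,\, \Phi^*\gs(X)u_2 - \gr(X)x_2\rangle.
\end{equation*}
Moving adjoints across the pairing, the $\gr(X)$ contributions collapse to $\frac{1}{i}\langle(\gr(X)^* - \gr(X))x_1, x_2\rangle = -\langle \Phi^*\gs(X)\Phi x_1, x_2\rangle$ by the colligation condition \eqref{eq:coll}, while the $\Phi^*\gs(X)$ terms yield $\frac{1}{i}\langle\gs(X)u_1,\Phi x_2\rangle - \frac{1}{i}\langle\Phi x_1,\gs(X)u_2\rangle$ after using $\gs(X)^*=\gs(X)$.

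The main obstacle, and the real content of the computation, is the bookkeeping that reorganizes these mixed $u$/$x$ terms into the clean output form. The strategy is to substitute $\Phi x_j = \frac{1}{i}(u_j - y_j)$, obtained from the output equation $y_j = u_j - i\Phi x_j$, into every occurrence of $\Phi x_j$. The colligation term $-\langle\Phi^*\gs(X)\Phi x_1, x_2\rangle = -\langle\gs(X)\Phi x_1,\Phi x_2\rangle$ then becomes a quadratic expression in $u$ and $y$, and after carefully collecting all the cross terms and invoking the self-adjointness of $\gs(X)$, the $u$-$y$ cross terms cancel and the diagonal terms assemble precisely into $\langle\gs(X)u_1, u_2\rangle_{\cE} - \langle\gs(X)y_1, y_2\rangle_{\cE}$. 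I expect the sign tracking through the factors of $i$ and the conjugation in the second argument to be the delicate point; keeping a consistent convention (conjugate-linear in the second slot) throughout and verifying it once on the colligation term should prevent errors.
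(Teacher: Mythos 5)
Your proof is correct --- I checked the sign bookkeeping: after substituting $\Phi x_j = \tfrac{1}{i}(u_j - y_j)$, the three surviving terms $\langle\sigma(X)u_1,\,u_2-y_2\rangle + \langle\sigma(X)(u_1-y_1),\,u_2\rangle - \langle\sigma(X)(u_1-y_1),\,u_2-y_2\rangle$ do collapse to $\langle\sigma(X)u_1,u_2\rangle_{\cE} - \langle\sigma(X)y_1,y_2\rangle_{\cE}$ --- but it takes a different route from the paper's. The paper never applies the original state equation to the second trajectory; instead it invokes Proposition \ref{prop-adjoint_traj}, viewing $(y_2,x_2,u_2)$ as a trajectory of the adjoint system \eqref{eq:adj_system}, so that $Xx_2 = -i\left(\Phi^*\sigma(X)y_2 - \rho(X)^* x_2\right)$. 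With $\rho(X)$ acting in the first slot and $\rho(X)^*$ in the second, the representation terms cancel outright via $\langle\rho(X)x_1,x_2\rangle = \langle x_1,\rho(X)^*x_2\rangle$: the colligation condition \eqref{eq:coll} is never cited inside the lemma's proof, the quadratic term $\langle\sigma(X)\Phi x_1,\Phi x_2\rangle$ never appears, and the argument finishes by substituting $i\Phi x_j = u_j - y_j$ into just two bilinear terms and cancelling a single cross term by self-adjointness of $\sigma(X)$. Your version pays for bypassing the adjoint system by having to invoke \eqref{eq:coll} explicitly and track the extra state-quadratic term; this is precisely where Proposition \ref{prop-adjoint_traj} hides the colligation condition (it is used in \emph{its} proof, not in the lemma's). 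The two arguments are algebraically equivalent, but the paper's packaging is shorter and makes visible the conceptual point that the adjoint vessel interchanges inputs and outputs, whereas yours is self-contained at the level of the lemma and does not depend on having established the adjoint-trajectory proposition first.
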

\begin{proof}
We consider the trajectory  $(y_2,x_2,u_2)$ as a trajectory of the adjoint system and compute:
\begin{equation*}
\begin{split}
X f(g) & = X \langle x_1(g), x_2(g) \rangle_{\cH} = \langle X x_1(g) , x_2(g) \rangle_{\cH} + \langle x_1(g), X x_2(g) \rangle_{\cH} \\
& = \langle \Phi^* \sigma(X) u_1(g) - \rho(X) x_1(g), x_2(g) \rangle_{\cH} \\
& + \langle x_1(g), \Phi^* \sigma(X) y_2(g) + \rho(X)^* x_2(g) \rangle_{\cH} \\
& = \langle \sigma(X) u_1(g), \Phi x_2(g) \rangle_{\cE} + \langle \Phi^* x_1(g), \sigma(X) y_2(g) \rangle_{\cE} \\ & = \langle \sigma(X) u_1(g), y_2(g) - u_2(g) \rangle_{\cE} + \langle y_1(g) - u_1(g) , \sigma(X) y_2(g) \rangle_{\cE} \\ & = \langle \sigma(X) y_1(g) ,y_2(g) \rangle_{\cE} - \langle \sigma(X) u_1(g), u_2(g) \rangle_{\cE}.
\end{split}
\end{equation*}
\end{proof}

Hence we conclude that the system trajectories satisfy the following energy conservation law:
\begin{cor} \label{prop-energy}
Let $(u,x,y)$ be a trajectory of the system \eqref{eq:system}, then, for every $X \in \fg$ we have:
\begin{equation} \label{eq:energy}
X \langle x, x \rangle = \langle \gs(X) y, y \rangle - \langle \gs(X) u, u \rangle.
\end{equation}
In the basis-dependent form we have:
\begin{equation} \label{eq:energy-basis}
X_k \langle x, x \rangle = \langle \gs_k y, y \rangle - \langle \gs_k u, u \rangle,\quad k=1,\ldots,l.
\end{equation} 
\end{cor}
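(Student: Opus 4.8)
The plan is to obtain this corollary as the diagonal specialization of Lemma \ref{lem-energy}. First I would set both trajectories appearing in that lemma equal to the single given trajectory, that is, take $(u_1,x_1,y_1) = (u_2,x_2,y_2) = (u,x,y)$. The auxiliary function of the lemma then becomes $f(g) = \langle x(g), x(g) \rangle_{\cH} = \langle x,x \rangle$, and the conclusion of the lemma reads exactly
\begin{equation*}
X \langle x,x \rangle = \langle \sigma(X) u, u \rangle_{\cE} - \langle \sigma(X) y, y \rangle_{\cE},
\end{equation*}
which is \eqref{eq:energy}. To get the basis-dependent version \eqref{eq:energy-basis}, I would simply evaluate \eqref{eq:energy} at each basis vector $X = X_k$ and recall the notation $\sigma_k = \sigma(X_k)$.

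Since the full computation has already been carried out in the proof of Lemma \ref{lem-energy} --- where one views the second copy of the trajectory as a trajectory of the adjoint system via Proposition \ref{prop-adjoint_traj} and then invokes the colligation condition \eqref{eq:coll} to cancel the state terms --- there is essentially no remaining obstacle; the corollary is a pure specialization. The only point worth verifying for consistency is that both sides are genuinely real-valued: the left side because $\langle x,x \rangle$ is real and $X$ is a real left-invariant vector field, and the right side because $\sigma(X)$ is self-adjoint, so that $\langle \sigma(X) u, u \rangle$ and $\langle \sigma(X) y, y \rangle$ are real. This sanity check confirms that the diagonal substitution is legitimate and that no hidden imaginary contributions survive, so that the identity is precisely the conservation of the $\sigma(X)$-weighted energy flux along the trajectory.
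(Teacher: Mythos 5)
Your proposal is correct and is precisely the paper's own route: the corollary appears there with no separate proof, introduced by ``Hence we conclude\dots'' as the diagonal specialization $(u_1,x_1,y_1)=(u_2,x_2,y_2)=(u,x,y)$ of Lemma \ref{lem-energy}, with \eqref{eq:energy-basis} following by evaluating at the basis vectors $X_k$. Your added reality check on both sides is a harmless extra observation not present in the paper.
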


\begin{dfn} \label{dfn:rest_system}
Let $\fH \subset \fG$ be a simply connected Lie subgroup and let $\fh \subset \fg$ be the corresponding Lie subalgebra. 
The restriction of \eqref{eq:system} is the system of differential equations on $\fH$ obtained from \eqref{eq:system} 
by considering only $X \in \fh$ in the first equation of \eqref{eq:system} 
and letting $u$, $x$ and $y$ be functions on $\fH$.
\end{dfn}

The following is an immediate consequence of Definitions \ref{dfn:puulback} and \ref{dfn:rest_system}.

\begin{prop} \label{prop:rest_pullback}
Let $\fH \subset \fG$ be a simply connected Lie subgroup and let $\iota \colon \fH \to \fG$ be the inclusion map, 
so that $d\iota \colon \fh \to \fg$ is the inclusion map of $\fh$ in $\fg$. 
Let $\fV$ be a $\fg$-operator vessel, then the associated system of $\fV$ restricted to $\fH$ coincides with the associated system of $d\iota^* \fV$.
\end{prop}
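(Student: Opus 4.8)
The plan is to unravel the two Definitions involved and to verify that they produce literally the same system of differential equations on $\fH$. Both objects are, by construction, systems for functions $x \colon \fH \to \cH$ and $u, y \colon \fH \to \cE$, so it suffices to compare the equations term by term. Because the statement is asserted to be an immediate consequence of Definitions \ref{dfn:puulback} and \ref{dfn:rest_system}, the whole argument reduces to a direct substitution together with a single differential-geometric remark about left invariant vector fields.

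First I would write out the associated system of the pullback vessel $d\iota^* \fV$. By Definition \ref{dfn:puulback}, this $\fh$-vessel has underlying spaces $\cH, \cE$, the same operator $\Phi$, and structure maps $\rho \circ d\iota$ and $\sigma \circ d\iota$. Applying the definition of the associated system \eqref{eq:system} to this $\fh$-vessel gives, for each $X \in \fh$,
\[
i X x + (\rho \circ d\iota)(X)\, x = \Phi^* (\sigma \circ d\iota)(X)\, u, \qquad y = u - i\Phi x.
\]
On the other hand, the restriction of the associated system of $\fV$ to $\fH$, as in Definition \ref{dfn:rest_system}, retains only the equations indexed by $X \in \fh$ and reads
\[
i X x + \rho(X)\, x = \Phi^* \sigma(X)\, u, \qquad y = u - i\Phi x.
\]
Since $d\iota \colon \fh \to \fg$ is the inclusion, we have $(\rho \circ d\iota)(X) = \rho(X)$ and $(\sigma \circ d\iota)(X) = \sigma(X)$ for every $X \in \fh$; the operators $\Phi$ and $\Phi^*$ are unchanged; and the output equations are already identical. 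Thus the two systems carry the same coefficient operators.

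The only point requiring care is the meaning of the symbol $X$ as a differential operator. In the pullback system $X \in \fh$ acts as the left invariant vector field it generates on $\fH$, whereas in the restricted system $X$ is a priori the left invariant vector field on $\fG$ applied to functions defined on $\fH$. These two interpretations agree because the inclusion $\iota \colon \fH \to \fG$ is a Lie group homomorphism whose differential $d\iota$ is the inclusion $\fh \hookrightarrow \fg$; consequently, for $X \in \fh$, the left invariant vector field it determines on $\fH$ is $\iota$-related to the left invariant vector field it determines on $\fG$, i.e.\ $\iota_*$ carries the former to the restriction of the latter. Hence $X$ acts identically on functions on $\fH$ in both systems, and the two systems of differential equations coincide verbatim. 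This compatibility of left invariant vector fields under the subgroup inclusion is the only nontrivial input; everything else is a direct substitution, which is precisely why the proposition follows immediately from Definitions \ref{dfn:puulback} and \ref{dfn:rest_system}.
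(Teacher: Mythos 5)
Your proof is correct and takes the same route as the paper, which offers no written proof at all but simply declares the proposition an immediate consequence of Definitions \ref{dfn:puulback} and \ref{dfn:rest_system}. Your write-up supplies exactly the details behind that assertion: the coefficient operators match because $d\iota$ is the inclusion, and the point you rightly flag as the only nontrivial one---that for $X \in \fh$ the left invariant vector field on $\fH$ is the restriction of the left invariant vector field on $\fG$, by $\iota$-relatedness---is what makes the two systems literally identical.
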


\subsection{Compatibility Equations} \label{subsec:compat}

Following the lines of \cite{BV} and \cite{LKMV} we note that the system is overdetermined. Therefore the input of the system has to satisfy some compatibility conditions for the equations to have a solution.

\begin{prop} \label{exit&uniq:systemeq_solut}
The system equations \eqref{eq:system} are compatible for a simply connected Lie group $\fG$ if and only if the input signal $u$ satisfies
\begin{equation} \label{eq:comp-input-nonstrict}
\Phi^* \left(\gs(Y) X u - \gs(X) Y u + \gg(X \wedge Y) u\right) = 0
\end{equation}
for all left invariant vector fields $X$, $Y$ on $\fG$; the corresponding output signal $y$
then satisfies
\begin{equation} \label{eq:comp-output-nonstrict}
\Phi^* \left(\gs(Y) X y - \gs(X) Y y - \ggst(X \wedge Y) y\right) = 0.
\end{equation}
In the basis-dependent form we have:
\begin{align}
\label{eq:comp-input-basis-nonstrict}
& \Phi^* \left( \gs_k X_j u - \gs_j X_k u + \gg_{jk}\right) u = 0,\quad j,k=1,\ldots,l,\\
\label{eq:comp-output-basis-nonstrict}
& \Phi^* \left(\gs_k X_j y - \gs_j X_k y - \ggst_{jk}\right) y = 0,\quad j,k=1,\ldots,l.
\end{align}
\end{prop}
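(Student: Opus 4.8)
The plan is to derive the compatibility condition by requiring that the first system equation $iXx + \rho(X)x = \Phi^*\sigma(X)u$ admits a solution $x$ for a given input $u$. Since $\fG$ is simply connected, the obstruction to solvability is purely local and is governed by the integrability (flatness) of the associated connection: the equations determine the directional derivatives $Xx$ for every left-invariant field $X$, so a solution exists iff the mixed second derivatives agree, i.e. iff $[X,Y]x$ computed two ways is consistent. First I would rewrite the first equation as $Xx = \tfrac1i\bigl(\Phi^*\sigma(X)u - \rho(X)x\bigr)$, giving a formula for $Xx$ as a function of $x$ and $u$.

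The key computation is to apply $Y$ to the expression for $Xx$ and subtract the same with $X,Y$ interchanged, then compare with the value forced by the bracket relation $[X,Y]x$ (using that $[X,Y]$ is again a left-invariant field, so $[X,Y]x = \tfrac1i(\Phi^*\sigma([X,Y])u - \rho([X,Y])x)$). Concretely I would compute
\begin{equation*}
XYx - YXx - [X,Y]x,
\end{equation*}
substitute the first-equation expressions for $Yx$, $Xx$, and $[X,Y]x$, and expand. In the course of this I expect the state terms involving $\rho(X)\rho(Y) - \rho(Y)\rho(X) - i\rho([X,Y])$ applied to $x$ to cancel by the commutator condition \eqref{eq:comm}, leaving only terms in $u$ and its first derivatives. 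The residual terms should assemble, after using the colligation condition \eqref{eq:coll} to rewrite $\rho(X)-\rho(X)^*$ as $\Phi^*\sigma(X)\Phi$ and the input vessel condition \eqref{eq:input} to identify $\sigma(X)\Phi\rho(Y)^* - \sigma(Y)\Phi\rho(X)^* = \gamma(X\wedge Y)\Phi$, into precisely $\Phi^*\bigl(\sigma(Y)Xu - \sigma(X)Yu + i\gamma(X\wedge Y)u\bigr)$, which must vanish. This is the necessity direction; sufficiency follows because on a simply connected group the vanishing of this curvature-type obstruction guarantees a global solution.

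For the output compatibility \eqref{eq:comp-output-nonstrict}, the cleanest route is to invoke the adjoint system and Proposition \ref{prop-adjoint_traj}: the triple $(y,x,u)$ is a trajectory of the adjoint system \eqref{eq:adj_system}, so running the identical argument for the adjoint system (with $\rho$ replaced by $\rho^*$ and $\gamma$ by $\gamma_*$, using the output vessel condition \eqref{eq:output} in place of \eqref{eq:input}) yields the stated condition on $y$, with $\gamma_*(X\wedge Y)$ appearing in place of $\gamma(X\wedge Y)$. The basis-dependent forms \eqref{eq:comp-input-basis-nonstrict} and \eqref{eq:comp-output-basis-nonstrict} are then immediate by setting $X=X_k$, $Y=X_j$.

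The main obstacle I anticipate is not conceptual but bookkeeping: the expansion of $XYx - YXx - [X,Y]x$ produces many operator products, and one must carefully track which cancellations come from \eqref{eq:comm}, which state-dependent terms must be eliminated using the colligation condition \eqref{eq:coll}, and which input terms reorganize via \eqref{eq:input}. The subtle point is that the raw integrability condition is an identity in $\cH$, and only after left-multiplying by $\Phi^*$ (equivalently, projecting appropriately) do the $u$-terms collapse to the vessel operator $\gamma(X\wedge Y)$; getting the factor of $i$ and the signs right in matching $\sigma(X)\Phi\rho(Y)^*$ to the input vessel condition is where a sign error would most easily creep in. I would therefore verify the final assembly by specializing to the abelian case of Example \ref{ex-commutative}, where it must reduce to the known commutative compatibility equations.
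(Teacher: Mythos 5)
Your proposal is correct, and its necessity direction is essentially the paper's own computation: the paper likewise applies $Y$ and $X$ to the state equations, subtracts, compares with the state equation for $[X,Y]$, and the cross terms in $x$ cancel by \eqref{eq:comm} --- your ``mixed second derivatives must agree'' framing is the same calculation. Two bookkeeping corrections, though, in exactly the place you flagged as delicate: the colligation condition \eqref{eq:coll} is not used at all in this computation, whereas \eqref{eq:2nd}, which your ingredient list omits, is indispensable (it is what converts the term $\Phi^*\gs([X,Y])u$ coming from the state equation for $[X,Y]$ into $\gg$-terms); and there is no a posteriori ``left-multiplying by $\Phi^*$'' --- indeed $\Phi^*$ maps $\cE$ to $\cH$, so it cannot be applied to the $\cH$-valued integrability expression. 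Rather, the $\Phi^*$ in \eqref{eq:comp-input-nonstrict} emerges automatically: every $u$-term of the integrability identity either already carries $\Phi^*$ from the system equation, or is of the form $\gr(Y)\Phi^*\gs(X)u - \gr(X)\Phi^*\gs(Y)u = \Phi^*\gg(X\wedge Y)^*u$, by the \emph{adjoint} of the input vessel condition \eqref{eq:input} (the form you will actually need, not \eqref{eq:input} itself).

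Where you genuinely diverge from the paper is in the other two parts, and both divergences are defensible. For the output condition, you invoke Proposition \ref{prop-adjoint_traj} and apply the already-proved input statement to the adjoint vessel $\fV^*$; since $\fV^*$ has data $(\gr^*,-\Phi,-\gs,-\ggst,-\gg)$, the signs cancel in pairs and one lands exactly on \eqref{eq:comp-output-nonstrict}. This is legitimate ($\fV^*$ is verified to be a vessel before this proposition) and is arguably cleaner than the paper's route, which substitutes $y = u - i\Phi x$ directly and uses the output vessel condition \eqref{eq:output} together with the linkage condition \eqref{eq:linkage}; in your route the linkage condition is hidden inside the verification that $\fV^*$ is a vessel. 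For sufficiency, you appeal to the general principle that involutivity plus simple connectivity gives global solvability. The principle is sound, and its hypothesis is precisely what your necessity computation establishes (the $x$-dependence of the Frobenius obstruction cancels identically by \eqref{eq:comm}, so involutivity is equivalent to \eqref{eq:comp-input-nonstrict} holding at every point); but the unknown is $\cH$-valued, so you must invoke the Banach-space version of the Frobenius/monodromy argument rather than the finite-dimensional textbook statement. The paper instead does this step by hand and gains an explicit formula: it exponentiates $-i\gr$ to a homomorphism $E\colon\fG\to\cL(\cH)$ (using simple connectivity once, plus boundedness of the $\gr(X)$), gauge-transforms the system by $E$ so that it becomes $df=\omega_u$ with $\omega_u$ closed exactly when \eqref{eq:comp-input-nonstrict} holds, and applies $H^1_{dR}(\fG)=0$; the resulting variation-of-parameters formula $x(g)=E(g^{-1})\left(h+if(e)-if(g)\right)$ also makes transparent that the state is determined up to its initial value. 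The paper further gives a second, cohomological proof of sufficiency (vanishing of $H^1(\fg,C^{\infty}(\fG)\bar{\otimes}\cH)$), which your approach does not touch.
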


We will provide two proofs of this fact. The first is a direct proof similar to the proof in \cite{BV} and the second is a more high-level proof using the theory of continuous and differentiable cohomology developed in \cite{HochMos}.

\begin{proof}[First Proof]
We prove the ``only if'' part first. Assume the system equations \eqref{eq:system} are compatible, then we pick $u$, $x$ and $y$ solutions for the system and consider the following set of equations:
\begin{align}
& \label{prop3.1:eq1} \Phi^* \sigma(X) Y u = Y X x + \gr(X) Y x \\
& \label{prop3.1:eq2} \Phi^* \sigma(Y) X u = X Y x + \gr(Y) X x \\
& \label{prop3.1:eq3} \sigma([X,Y]) + \gamma(X\wedge Y) + \gamma(X\wedge Y)^* = 0.
\end{align}
The equation \eqref{prop3.1:eq1} comes form applying $Y$ to the first system equation for $X$ and \eqref{prop3.1:eq2} comes form applying $X$ to the first system equation for $Y$. The last equation is simply \eqref{eq:2nd}. Now we subtract \eqref{prop3.1:eq1} from \eqref{prop3.1:eq2} and obtain:
\begin{equation} \label{prop3.1:eq4}
\Phi^*(\sigma(Y) X - \sigma(X) Y) u = [X,Y] x + (\gr(Y) X - \gr(X) Y) x.
\end{equation}
We now consider the first system equation for $[X,Y]$:
\begin{equation*}
\Phi^*\sigma([X,Y]) u = [X,Y] x + \gr([X,Y]) x.
\end{equation*}
We plug in \eqref{prop3.1:eq3} and obtain:
\begin{equation} \label{prop3.1:eq5}
- \Phi^*(\gamma(X\wedge Y) + \gamma(X\wedge Y)^*) u = [X,Y] x + [\gr(X),\gr(Y)] x.
\end{equation}
Now we subtract \eqref{prop3.1:eq5} from \eqref{prop3.1:eq4} and get:
\begin{align} \label{prop3.1:eq6}
\begin{split}
& \Phi^*(\sigma(Y) X - \sigma(X) Y + \gamma(X\wedge Y)) u + \Phi^*\gamma(X\wedge Y)^* u \\
&  = (\gr(Y) X - \gr(X) Y - [\gr(X),\gr(Y)]) x.
\end{split}
\end{align}
Adjoining the input vessel equation and using the fact that $\sigma(X)$ is selfadjoint for every $X \in \fg$ and applying it to $u$, we get:
\begin{multline} \label{prop3.1:eq7}
 \Phi^*\gamma(X\wedge Y)^* u = (\gr(Y)\Phi^*\sigma(X) - \gr(X)\Phi^*\sigma(Y)) u \\ =
 \gr(Y)( X x + \gr(X) x) - \gr(X) ( Y x + \gr(Y) x) \\
 = \gr(Y) X x - \gr(X) Y x - [\gr(X),\gr(Y)] x.
\end{multline}
Now plugging \eqref{prop3.1:eq7} back into \eqref{prop3.1:eq6} we get: 
\[
\Phi^*(\sigma(Y) X - \sigma(X) Y + \gamma(X\wedge Y)) u = 0.
\]
Hence the non-strict input compatibility condition holds. We now show that the non-strict input compatibility condition implies the non-strict output compatibility condition. Consider the non-strict output compatibility condition \eqref{eq:comp-output-nonstrict}. We substitute the second of the system equations \eqref{eq:system} we obtain:
\begin{multline*}
\Phi^* \left(\gs(Y) X (u - \Phi x) - \gs(X) Y (u - \Phi x) - \ggst(X \wedge Y) (u - \Phi x) \right) \\ =
\Phi^* \left(\gs(Y) X u - \gs(X) Y u + \ggst(X \wedge Y) u\right) \\
- \Phi^* \left(\gs(Y) \Phi X x - \gs(X) \Phi Y x - \ggst(X \wedge Y) \Phi x\right).
\end{multline*}
Now we plug \eqref{eq:output} into the second term and using the first system equation we obtain:
\begin{multline*}
\Phi^* \left(\gs(Y) \Phi X x - \gs(X)\Phi Y x  - \gs(X) \Phi \gr(Y) x + \gs(Y) \Phi \gr(X)x\right) \\
=  \Phi^* \left(\gs(Y)\Phi (X x + \gr(X)x) -\gs(X) \Phi (Y x + \gr(Y)x)\right) \\
= \Phi^* \left(\gs(Y)\Phi\Phi^* \gs(X) u - \gs(X) \Phi \Phi^* \gs(Y) u\right) \\
= - \Phi^* \left(\ggst(X \wedge Y) + \gg(X \wedge Y)\right) u.
\end{multline*}
The last equality is obtained via the linkage condition. Plugging it back into our original equation we get:
\begin{multline*}
\Phi^* \left(\gs(Y) X (u - \Phi x) - \gs(X) Y (u -  \Phi x) - \ggst(X \wedge Y) (u - \Phi x) \right) \\ = 
\Phi^* \left(\gs(Y) X u - \gs(X) Y u - \ggst(X \wedge Y) u\right) \\
+ \Phi^* \left(\ggst(X \wedge Y) + \gg(X \wedge Y) u\right)
= 0.
\end{multline*}
Since we have assumed that the non-strict input compatibility condition is satisfied.

Now for the ``if'' part. Let $u \in C^{\infty}(\fG,\cE)$ be a function satisfying the non-strict input/output compatibility conditions and we must show that the system \eqref{eq:system} is compatible. 

Following the proof of \cite[Lem.\ 9.1]{Nel59}, we note that since every $\rho(X)$ is a bounded operator on $\cH$, every vector in $\cH$ is analytic for $\rho(X)$. Hence we can define a map $E(\exp X) = e^{\rho(X)}$. from some neighbourhood of the identity in $\fG$ to $\cL(\cH)$. The discussion in \cite[Ch.\ 4.10]{Che99} shows that this map is in fact a local homomorphism (see also \cite[Thm.\ III.4.1]{B1}). Now since $\fG$ is simply connected we can extend $E$ to a homomorphism satisfying:
\begin{equation*}
(X E)(g) = E(g) \rho(X)
\end{equation*}

We define the following $\cH$-valued 1-form:
\begin{equation*}
(\omega_u)(X)(g) =  E(g) \Phi^* \sigma(X) u(g).
\end{equation*}

We prove now that $\omega_u$ is a closed form. Recall that in order to show that a $1$-form,$\theta$, is closed, it suffices to check that $d\theta(X,Y) = 0$, for every two left-invariant vector fields $X,Y \in \fg$. This follows from the fact that the left invariant vector fields trivialize the tangent bundle globally. Now we note that:
\begin{equation*}
(d\omega_u)(X,Y) = X \omega_u(Y) - Y \omega_u(X) - \omega_u([X,Y]).
\end{equation*}
Now let us compute:
\begin{multline*}
(X \omega_u(Y))(g) = E(g) \rho(X) \Phi^* \sigma(Y) u(g) + E(g) \Phi^*\sigma(Y) (X u)(g).
\end{multline*}
On the other hand for the commutator we have:
\begin{align*}
\begin{split}
\omega_u([X,Y])(g) & = E(g) \Phi^* \sigma([X,Y]) u(g) \\
& = - E(g) \Phi^* (\gamma(X \wedge Y) + \gamma(X \wedge Y)^*) u(g) \\
& = E(g) \Phi^* \gamma (X \wedge Y) u(g) \\ & - E(g) (\rho(Y) \Phi^* \sigma(X) - \rho(X) \Phi^* \sigma(Y)) u(g).
\end{split}
\end{align*}
Now plug the above result into the formula for the differential of $\omega_u$ and get:
\begin{multline*}
(d \omega_u)(X,Y)(g) = E(g) \rho(X) \Phi^* \sigma(Y) u(g) + E(g) \Phi^*\sigma(Y) (X u)(g) \\ - E(g) \rho(Y) \Phi^* \sigma(X) u(g) - E(g) \Phi^*\sigma(X) (Y u)(g) + E(g) \Phi^* \gamma (X \wedge Y) u(g) \\ + E(g) (\rho(Y) \Phi^* \sigma(X) - \rho(X) \Phi^* \sigma(Y)) u(g) = 0.
\end{multline*}
The last equality follows from the assumption that $u$ is an admissible input.

Since $\fG$ is simply connected its first deRham cohomology group is trivial. Thus there exists a function $f \colon \fG \to \cH$, such that $d f = \omega_u$.

We define another smooth function $F \colon \fG \to \cL(\cH)$ by $F(g) = E(g^{-1}) $. Let us compute the differential of this function:
\begin{equation*}
X(g) F = \dfrac{d}{dt} (E((g \exp(tX))^{-1}))|_{t=0} = - \rho(X) F(g).
\end{equation*}

Now we look at the smooth function $x \colon \fG \to \cH$, $x(e) = h \in \cH$ defined by:
\begin{equation*}
x(g) = F(g)(h + f(e) - f(g)).
\end{equation*}
Plug the function $x$ into the system and we obtain:
\begin{multline*}
(X x)(g) + \rho(X) x(g) = (X F)(g) (h + f(e) - f(g)) \\ - F(g) (X f)(g)  + \rho(X) x(g) = - \rho(X) F(g) (h + f(e) - f(g)) \\ + \Phi^* \sigma(X) u(g) + \rho(X) x(g) = \Phi^* \sigma(X) u(g).
\end{multline*}
Hence the system is compatible.
\end{proof}

\begin{rem}
The above formula is the non-commutative analogue of the variation of parameter formula described in \cite{BV}.
\end{rem}

\begin{rem}
Note that the input and output compatibility conditions for the adjoint vessel are interchanged.
\end{rem}

\begin{rem} \label{onlystrict}
The strict input/output compatibility conditions are:
\begin{align}
\label{eq:comp-input}
& \gs(Y) X u - \gs(X) Y u +  \gg(X \wedge Y) u &= 0,\\
\label{eq:comp-output}
& \gs(Y) X y - \gs(X) Y y - \ggst(X \wedge Y) y &= 0,
\end{align}
or in the basis-dependent form:
\begin{align}
\label{eq:comp-input-basis}
& \gs_k X_j u - \gs_j X_k u + \gg_{jk} u = 0,\quad j,k=1,\ldots,l,\\
\label{eq:comp-output-basis}
& \gs_k X_j y - \gs_j X_k y - \ggst_{jk} y = 0,\quad j,k=1,\ldots,l.
\end{align}
If an input $u$ satisfies the strict input compatibility conditions, then the output $y$ satisfies the strict output compatibility conditions.

In the case of strict vessels the strict and non-strict compatibility conditions coincide. 
\end{rem}

\begin{ex} 
In the case $\fg = \mathbb{R}^n$ and thus $\fG = \mathbb{R}^n$, we get the compatibility conditions described in \cite{BV} and \cite{LKMV}:
\begin{equation*}
\left(\sigma_k \pd{}{t_j} - \sigma_j \pd{}{t_k} + \gamma_{jk} \right) u(t) = 0.
\end{equation*}
Note the slight difference, both in \cite{BV} and \cite{LKMV} the operator $\gamma_{jk}$ is selfadjoint but it is multiplied by $i$, so in fact it is the same equation.
\end{ex}

\begin{ex} \label{ex:ax+b-compat}
Let $\fG$ be the $ax+b$ group, then using the notations of Examples \ref{ex:ax+b-vessel} and \ref{ex:ax+b-system}, we get the compatibility equation:
\begin{equation*}
\left(a \sigma_2 \pda{} - a \sigma_1 \pdb{} + \gamma \right) u(a,b) = 0.
\end{equation*}
\end{ex}

\begin{ex} \label{ex:heisenberg-compat}
Let $\fG = H_n$, then using the notations of Examples \ref{ex:heisenberg-vessel} and \ref{ex:heisenberg-system}, 
we get the following system of equations ( $1 \leq j,k \leq n$ ):
\begin{align*}
\begin{split}
& \left( \sigma(X_k) \pd{}{p_j} - \sigma(X_j) \pd{}{p_k} + \frac{1}{2}(q_k \sigma(X_j) - q_j \sigma(X_k)) \pd{}{r} + \gamma(X_j \wedge X_k) \right) u(p,q,r) = 0, \\
& \left( \sigma(Y_k) \pd{}{q_j} - \sigma(Y_j) \pd{}{q_k} + \frac{1}{2} (p_j \sigma(Y_k) - p_k \sigma(Y_j)) \pd{}{r} + \gamma(Y_j \wedge Y_k) \right) u(p,q,r) = 0, \\
& \left( \sigma(X_k) \pd{}{q_j} - \sigma(Y_j) \pd{}{p_k} + \frac{1}{2} (p_j \sigma(X_k) + q_k \sigma(Y_j)) \pd{}{r} + \gamma(Y_j \wedge X_k) \right) u(p,q,r) = 0, \\
& \left( \sigma(X_k) \pd{}{r} - \sigma(Z) \pd{}{p_k} + \frac{1}{2} q_k \sigma(Z) \pd{}{r} + \gamma(Z \wedge X_k) \right) u(p,q,r) = 0, \\
& \left( \sigma(Y_k) \pd{}{r} - \sigma(Z) \pd{}{q_k} - \frac{1}{2} p_k \sigma(Z) \pd{}{r} + \gamma(Z \wedge Y_k) \right) u(p,q,r) = 0.
\end{split}
\end{align*}
\end{ex}

In fact system compatibility is a cohomological property. One can prove the above proposition in the following fashion.

\begin{proof}[Second Proof of Proposition \ref{exit&uniq:systemeq_solut}]
As we have seen above we can exponentiate the action $\rho$ of $\fg$ on $\cH$, to an action $E$ of $\fG$ on $\cH$.

Consider the $\fG$ module $C^{\infty}(\fG,\cH)$. We know \cite[Thm.\ 44.1]{Treves} that $C^{\infty}(\fG,\cH) \cong C^{\infty}(\fG)\bar{\otimes} \cH$ (here the tensor product is the completed projective tensor product). The isomorphism is given by sending a basic tensor $f \otimes v$ to the function $f(h) v$. The action of $\fG$ on $C^{\infty}(\fG,\cH)$ is given by $(g \cdot f)(h) = E(g) f(g^{-1} h)$.

The infinitesimal Lie algebra action on $C^{\infty}(\fG)$ is the action by derivations. Hence the action of the Lie algebra $\fg$ on $C^{\infty}(\fG)\bar{\otimes}\cH$ is given by:
\begin{equation*}
X (f \otimes v) = (X f) \otimes v + f \otimes \rho(X) v.
\end{equation*}

Now fix an admissible input $u \in C^{\infty}(\fG,\cE)$. We define a map $\Phi^*\sigma(\cdot) u \colon \fg \to C^{\infty}(\fG)\bar{\otimes}\cH$. Since $u$ is an admissible input we have:
\begin{align*}
\begin{split}
d (\Phi^* \sigma(\cdot) u)(X,Y) & = X (\Phi^* \sigma(Y) u) - Y (\Phi^* \sigma(X) u) - (\Phi^* \sigma([X,Y]) u) \\ & = \Phi^* \sigma(Y) X u + \rho(X) \Phi^* \sigma(Y) u - \Phi^* \sigma(X) Y u \\ & - \rho(Y) \Phi^* \sigma(X) u + \Phi^* \gamma(X \wedge Y) u + \Phi^* \gamma(X \wedge Y)^* u \\ & = \Phi^* \sigma(Y) X u - \Phi^* \sigma(X) Y u + \Phi^* \gamma(X \wedge Y) u = 0.
\end{split}
\end{align*}
Now as above, applying the vessel conditions and the fact that $u$ is admissible, tells us that $d (\Phi^* \sigma(\cdot) u) = 0$. Hence this map is a $1$-cocycle in $H^1(\fg,C^{\infty}(\fG) \bar{\otimes} \cH)$. 

Note that $\cH$ is an integrable $\fG$-module (cf. \cite{HochMos}), since the map $J \colon C_c^{\infty}(\fG,\cH) \to \cH$ is simply given by $J(f) = \int_{\fG} f d\mu_{\fG}$, where $\mu_{\fG}$ is the Haar measure on $\fG$ and the integral is the Bochner integral. 

Since $\fG$ is simply connected we know from \cite[Thm.\ 1]{Komy} that:
\begin{equation*}
H^1(\fg,C^{\infty}(\fG) \bar{\otimes} \cH) \cong H_{cont}^1(\fG,C^{\infty}(\fG) \bar{\otimes} \cH).
\end{equation*}

However by \cite[Lem.\ 5.2]{HochMos} or \cite[Lem.\ 5.2]{BorWal} we know that $C^{\infty}(\fG) \bar{\otimes} \cH$ is a continuously injective $\fG$-module. Hence $H^1(\fg,C^{\infty}(\fG) \bar{\otimes} \cH) = 0$. This implies that $\Phi^* \sigma(\cdot) u$ is a coboundary and thus there exists an element $x \in C^{\infty}(\fG) \bar{\otimes} \cH$, such that $d x = \Phi^* \sigma( \cdot) u$. Writing it out we get:
\begin{equation*}
X x -i \rho(X) x = \Phi^* \sigma(X) u.
\end{equation*}
In other words the system equations are compatible.
\end{proof}

\begin{rem}
In the first proof one clearly sees that given an input satisfying the compatibility equations, 
the state is defined up to a choice of an initial condition. 
In the second proof this fact is hidden in the kernel of the differential. The state is defined up to a shift by the kernel, which consists of constant functions.
\end{rem}

\subsection{Principal Subspace} \label{subsec:principal}
Next we show that the classical characteristic functions carry crucial information about our operator vessel.

First we give a few general definitions.
\begin{dfn}
Let $\fg$ be a Lie algebra and let $U(\fg)$ be the universal enveloping algebra of the complexification, $\cfg$. Then $ \rho$ extends to a representation of $U(\fg)$ on $\cH$. The principal space of a $\fg$-vessel is the closed linear envelope:
\begin{equation*}
\cP = \bigvee_{T \in U(\fg)} \rho(T) \Phi^* \cE
\end{equation*}
\end{dfn}
The principal subspace for a Lie algebra colligation was defined in \cite{Waks}.

\begin{rem}
By the Poincare-Birkhoff-Witt theorem $U(\fg)$ is generated by monomials $X_1^{k_1} X_2^{k_2} \cdots X_n^{k_n}$, where the $X_j$ form a basis for $\fg$. Hence in fact:
\begin{equation*}
\cP = \bigvee_{(k_1, \ldots, k_n) \in \mathbb{N}^n} \rho(X_1)^{k_1}\cdots \rho(X_n)^{k_n} \Phi^* \cE.
\end{equation*} 
\end{rem}

\begin{rem} \label{power-commutator}
By definition we have for every $X \in \fg$:
\begin{equation*}
\rho(X) \cP \subseteq \cP
\end{equation*}
\end{rem}

Now we prove the following proposition that shows that this case is not far from the commutative colligation case:
\begin{prop} \label{adjoint-principal}
Let $\cP^* = \bigvee_{T \in U(\fg)} \rho(T)^* \Phi^* \cE$. Then $\cP^*$ coincides with $\cP$.
\end{prop}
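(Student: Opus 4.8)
The plan is to exploit the colligation condition \eqref{eq:coll} as the single structural input. Rewriting it as $\rho(X) - \rho(X)^* = i\Phi^*\sigma(X)\Phi$ shows that for every $X \in \fg$ the ``imaginary part'' of $\rho(X)$ has range inside $\Phi^*\cE$; equivalently $\rho(X)^* = \rho(X) - i\Phi^*\sigma(X)\Phi$ and $\rho(X) = \rho(X)^* + i\Phi^*\sigma(X)\Phi$. Since $\Phi^*\cE$ sits inside both $\cP$ and $\cP^*$ (take $T = 1$), these identities let me trade $\rho(X)$ for $\rho(X)^*$ and back at the cost of a vector in $\Phi^*\cE$, and the whole proof reduces to showing mutual invariance.

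First I would prove $\cP^* \subseteq \cP$. The subspace $\cP$ is closed, contains $\Phi^*\cE$, and is $\rho(X)$-invariant for every $X$ by Remark \ref{power-commutator}. I claim it is also $\rho(X)^*$-invariant: for $p \in \cP$ one has $\rho(X)^* p = \rho(X) p - i\Phi^*\sigma(X)\Phi p$, where the first summand lies in $\cP$ by Remark \ref{power-commutator} and the second equals $\Phi^*(-i\sigma(X)\Phi p) \in \Phi^*\cE \subseteq \cP$. Hence $\cP$ is a closed subspace containing $\Phi^*\cE$ and stable under all the $\rho(X)^*$. Because $\cP^*$ is the closed linear span of the vectors obtained by applying arbitrary products of the operators $\rho(X)^*$ to $\Phi^*\cE$ (this is what $\bigvee_{T \in U(\fg)} \rho(T)^*\Phi^*\cE$ amounts to, since $\rho(T)^*$ runs over the algebra generated by the $\rho(X)^*$), it follows that $\cP^* \subseteq \cP$.

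The reverse inclusion $\cP \subseteq \cP^*$ is entirely symmetric: $\cP^*$ is closed, contains $\Phi^*\cE$, and is $\rho(X)^*$-invariant by construction, while the identity $\rho(X) = \rho(X)^* + i\Phi^*\sigma(X)\Phi$ shows it is $\rho(X)$-invariant as well, so $\cP$, being the minimal such closed subspace, is contained in $\cP^*$. Combining the two inclusions gives $\cP = \cP^*$. The only point requiring care --- and the nearest thing to an obstacle --- is the bookkeeping that justifies treating $\cP$ and $\cP^*$ as the minimal closed subspaces containing $\Phi^*\cE$ and invariant under the families $\{\rho(X)\}$ and $\{\rho(X)^*\}$ respectively; this uses the Poincar\'e--Birkhoff--Witt description so that $\rho(T)$ sweeps out exactly the algebra generated by the $\rho(X)$, whence invariance under the single generators, together with closedness and containment of $\Phi^*\cE$, suffices to capture the whole span.
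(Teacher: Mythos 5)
Your proof is correct and follows essentially the same route as the paper: both arguments use the colligation condition \eqref{eq:coll} to write $\rho(X)^* = \rho(X) - i\Phi^*\sigma(X)\Phi$ (and vice versa), deduce that each of $\cP$, $\cP^*$ is invariant under the other family of operators since the correction term lands in $\Phi^*\cE$, and conclude both inclusions by minimality and symmetry. Your explicit bookkeeping via the Poincar\'e--Birkhoff--Witt description and boundedness of the operators is a welcome clarification of what the paper leaves implicit, but it is not a different method.
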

\begin{proof}
Let us show that $\cP^*$ is invariant under $\rho(X)$, for every $X \in \fg$. Then since $\Phi^* \cE \subseteq \cP^*$, we get that $\cP \subseteq \cP^*$. To see this note that by the colligation condition \eqref{eq:coll}, we have:
\begin{equation*}
\rho(X) = \Phi^* \sigma(X) \Phi - \rho(X)^*.
\end{equation*}
Hence for every $v \in \cP^*$:
\begin{equation*}
\rho(X) v = \Phi^* \sigma(X) \Phi v - \rho(X)^* v.
\end{equation*}
Note that $\Phi^* \sigma(X) \Phi v \in \Phi^* \cE$ and therefore $\rho(X) v \in \cP^*$. 

By symmetry of the argument we get that $\cP = \cP^*$.
\end{proof}

We now look at the subspace of $\cH$ orthogonal to $\cP$, namely $\cP^{\perp}$. 
By Remark \ref{power-commutator} and Proposition \ref{adjoint-principal}, 
the subspaces $\cP$ and $\cP^{\perp}$ are reducing subspaces for $\rho(X)$ for every $X \in \fg$. 
Note that for every $x \in \cP^{\perp}$, we have that $\langle x,\Phi^* e \rangle = 0$ for every $e \in \cE$, 
and therefore $\langle \Phi x, e \rangle = 0$, which implies that $\Phi x = 0$. 
In particular it follows from \eqref{eq:coll-basis} that the restriction of $\rho(X)$ to $\cP^{\perp}$ 
is skew-selfadjoint for every $X \in \fg$. 

\begin{dfn}
Given a $\fg$-vessel $\fV$, we will call $\fV$ minimal if $\cH = \cP$.
\end{dfn} 

The following proposition sheds additional light on the structure of the principal subspace:

\begin{prop} \label{principal-struct}
Assume that $\sigma(X)$ is invertible for some $X \in \fg$. Then $\cP = \bigvee_{k=0}^{\infty} \rho(X)^k \Phi^* \cE$.
\end{prop}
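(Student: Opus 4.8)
The plan is to set $\cP_X = \bigvee_{k=0}^{\infty} \rho(X)^k \Phi^* \cE$ and prove the two inclusions separately. The inclusion $\cP_X \subseteq \cP$ is immediate, since each $\rho(X)^k \Phi^*\cE$ occurs among the generators of $\cP$. For the reverse inclusion I would first observe that $\cP$ is the smallest closed subspace of $\cH$ containing $\Phi^*\cE$ and invariant under $\rho(Y)$ for every $Y \in \fg$: any such subspace must contain $\rho(Y_1)\cdots\rho(Y_j)\Phi^*\cE$ for all $Y_i \in \fg$, and these span $\cP$ by the Poincar\'e--Birkhoff--Witt remark following the definition. Thus it suffices to show that $\cP_X$ contains $\Phi^*\cE$ (which it does, taking $k=0$) and is invariant under $\rho(Y)$ for every $Y \in \fg$.

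To prove invariance I would establish by induction on $k$ the claim that $\rho(Y)\rho(X)^k \Phi^*\cE \subseteq \cP_X$ for every $k \geq 0$ and every $Y \in \fg$. The base case $k=0$ is where the invertibility of $\sigma(X)$ enters. Taking adjoints in the input vessel condition \eqref{eq:input} and using that $\sigma$ is selfadjoint gives
\begin{equation*}
\rho(Y)\Phi^*\sigma(X) = \rho(X)\Phi^*\sigma(Y) + \Phi^*\gamma(X\wedge Y)^*.
\end{equation*}
For $e \in \cE$ I would substitute $f = \sigma(X)^{-1} e$, so that $\Phi^* e = \Phi^*\sigma(X) f$, obtaining $\rho(Y)\Phi^* e = \rho(X)\Phi^*\sigma(Y) f + \Phi^*\gamma(X\wedge Y)^* f$. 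The first summand lies in $\rho(X)\Phi^*\cE \subseteq \cP_X$ and the second in $\Phi^*\cE \subseteq \cP_X$, so $\rho(Y)\Phi^* e \in \cP_X$.

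For the inductive step I would use the commutation relation $\rho(Y)\rho(X) = \rho(X)\rho(Y) + i\,\rho([Y,X])$, which follows from \eqref{eq:comm}, to write
\begin{equation*}
\rho(Y)\rho(X)^{m+1}\Phi^* e = \rho(X)\bigl(\rho(Y)\rho(X)^m \Phi^* e\bigr) + i\,\rho([Y,X])\rho(X)^m \Phi^* e.
\end{equation*}
By the inductive hypothesis applied to $Y$, the vector $\rho(Y)\rho(X)^m \Phi^* e$ lies in $\cP_X$, and since $\rho(X)\cP_X \subseteq \cP_X$ the first summand stays in $\cP_X$; by the inductive hypothesis applied to $[Y,X] \in \fg$, the second summand lies in $\cP_X$ as well. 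Once the claim is proved, boundedness of $\rho(Y)$ and closedness of $\cP_X$ give $\rho(Y)\cP_X \subseteq \cP_X$, completing the reverse inclusion and hence the proof.

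The one point requiring care — and the reason the induction must quantify over all $Y$ simultaneously at each level rather than fixing a single $Y$ — is the commutator term, whose coefficient $[Y,X]$ is again an arbitrary element of $\fg$. Keeping $Y$ universally quantified is precisely what allows the hypothesis to absorb this term, so I expect the bookkeeping of the induction, rather than any single computation, to be the main obstacle.
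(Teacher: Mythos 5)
Your proposal is correct and follows essentially the same route as the paper: both reduce to showing $\cP_X = \bigvee_{k\geq 0}\rho(X)^k\Phi^*\cE$ is $\rho(Y)$-invariant for every $Y$, handle the base case by taking adjoints in the input vessel condition \eqref{eq:input} and using the invertibility of $\sigma(X)$, and then induct on $k$ via the commutation relation \eqref{eq:comm}. Your write-up merely makes explicit two points the paper leaves implicit (the characterization of $\cP$ as the smallest closed invariant subspace containing $\Phi^*\cE$, and the use of boundedness of $\rho(X)$ with closedness of $\cP_X$), which is fine.
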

\begin{proof}
Denote by $\cP_X = \bigvee_{k=0}^{\infty} \rho(X)^k \Phi^* \cE$. Clearly $\cP_X \subseteq \cP$ and by definition $\Phi^* \cE \subseteq \cP_X$. Hence it suffices to show that $\cP_X$ is invariant under $\rho(Y)$, for all $Y \in \fg$. We show that in fact$\rho(Y) \rho(X)^k \Phi^* \cE \subseteq \cP_X$ by induction on $k$.

Take the adjoint of the input vessel condition, \eqref{eq:input}, to get:
\begin{equation*}
\rho(Y) \Phi^* \sigma(X) = \rho(X) \Phi^* \sigma(Y) + \Phi^* \gamma(X \wedge Y)^*.
\end{equation*}
Since $\sigma(X)$ is invertible we get that:
\begin{equation*}
\rho(Y) \Phi^* \cE \subseteq \rho(X) \Phi^* \cE + \Phi^* \cE \subseteq \cP_X.
\end{equation*}

Now we compute:
\begin{equation*}
\rho(Y) \rho(X)^k \Phi^* \cE = \rho(X) \rho(Y) \rho(X)^k-1 \Phi^* \cE + \rho([X,Y]) \rho(X)^k-1 \Phi^* \cE.
\end{equation*}
By induction we deduce that:
\begin{equation*}
\rho(Y) \rho(X)^k \Phi^* \cE \subseteq \rho(X) \cP_X + \cP_X = \cP_X.
\end{equation*}
\end{proof}

\begin{cor}
Combining Propositions \ref{adjoint-principal} and \ref{principal-struct} we get that if $\sigma(X)$ is invertible for some $X$, then:
\begin{equation*}
\cP = \bigvee_{k=0}^{\infty} \rho(X)^{*k} \Phi^* \cE.
\end{equation*}
\end{cor}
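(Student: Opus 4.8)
The plan is to deduce the corollary from the two cited propositions by passing to the adjoint vessel. Recall from Definition \ref{def-adjoint_vessel} that the adjoint vessel is $\fV^* = (\cH,\cE,\rho^*,-\Phi,-\sigma,-\gamma_*,-\gamma)$ with $\rho^*(X) = -\rho(X)^*$. First I would observe that the target subspace $\bigvee_{k=0}^\infty \rho(X)^{*k}\Phi^*\cE$ is exactly what Proposition \ref{principal-struct} produces when that proposition is applied to $\fV^*$ instead of $\fV$. Indeed, the $\sigma$-datum of $\fV^*$ is $-\sigma$, so the hypothesis that $\sigma(X)$ be invertible transfers verbatim (for the same $X$), and the conclusion reads $\cP_{\fV^*} = \bigvee_{k=0}^\infty \rho^*(X)^k\Phi^*\cE$. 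Since $\rho^*(X)^k = (-1)^k\rho(X)^{*k}$ and $(-\Phi)^*\cE = \Phi^*\cE$, the scalar signs are absorbed by the closed linear span, giving $\cP_{\fV^*} = \bigvee_{k=0}^\infty \rho(X)^{*k}\Phi^*\cE$.

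Next I would connect $\cP_{\fV^*}$ back to $\cP$. By Proposition \ref{adjoint-principal} we already have $\cP = \cP^*$, where $\cP^* = \bigvee_{T\in U(\fg)}\rho(T)^*\Phi^*\cE$, so it remains to identify $\cP^*$ with the principal subspace $\cP_{\fV^*}$ of the adjoint vessel. Both are closed subspaces containing $\Phi^*\cE$; by Remark \ref{power-commutator} applied to $\fV^*$, the subspace $\cP_{\fV^*}$ is invariant under every $\rho^*(Y) = -\rho(Y)^*$, hence under every $\rho(Y)^*$, and the same computation used in Proposition \ref{adjoint-principal} shows that $\cP^*$ is $\rho(Y)^*$-invariant as well. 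As each subspace is generated from $\Phi^*\cE$ by applying the operators $\rho(Y)^*$, each is the \emph{smallest} closed $\{\rho(Y)^*\}_{Y\in\fg}$-invariant subspace containing $\Phi^*\cE$, whence $\cP^* = \cP_{\fV^*}$. Chaining the three equalities yields $\cP = \cP^* = \cP_{\fV^*} = \bigvee_{k=0}^\infty \rho(X)^{*k}\Phi^*\cE$.

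The one point requiring care — the main, if minor, obstacle — is precisely this identification $\cP^* = \cP_{\fV^*}$: the generators of $\cP^*$ are the reversed products $\rho(X_n)^{*k_n}\cdots\rho(X_1)^{*k_1}\Phi^*\cE$ arising from adjoining a monomial $T = X_1^{k_1}\cdots X_n^{k_n}$, whereas those of $\cP_{\fV^*}$ are the forward products $\rho(X_1)^{*k_1}\cdots\rho(X_n)^{*k_n}\Phi^*\cE$, so one must verify that the ordering is immaterial. The invariance characterization above resolves this cleanly — both subspaces equal the smallest $\rho(\cdot)^*$-invariant closed subspace over $\Phi^*\cE$ — and thereby sidesteps any Poincaré--Birkhoff--Witt reordering of the monomials.

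Finally, I would record a self-contained alternative that avoids $\fV^*$ altogether: one mirrors the proof of Proposition \ref{principal-struct} using adjoints. Writing $\cP^*_X = \bigvee_{k=0}^\infty \rho(X)^{*k}\Phi^*\cE$, one has $\cP^*_X \subseteq \cP^*$ and $\Phi^*\cE \subseteq \cP^*_X$, so it suffices to prove $\cP^*_X$ is $\rho(Y)^*$-invariant. Taking adjoints in the output vessel condition \eqref{eq:output} gives $\rho(Y)^*\Phi^*\sigma(X) = \rho(X)^*\Phi^*\sigma(Y) + \Phi^*\gamma_*(X\wedge Y)^*$, so invertibility of $\sigma(X)$ furnishes the base case $\rho(Y)^*\Phi^*\cE \subseteq \rho(X)^*\Phi^*\cE + \Phi^*\cE \subseteq \cP^*_X$; the inductive step then uses $\rho(Y)^*\rho(X)^* = \rho(X)^*\rho(Y)^* - i\,\rho([X,Y])^*$, the adjoint of \eqref{eq:comm}, mutatis mutandis as in Proposition \ref{principal-struct}. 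Combining either route with Proposition \ref{adjoint-principal} gives the claim.
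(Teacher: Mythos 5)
Your proof is correct and is essentially the paper's own argument: the corollary appears there without a written proof, precisely as the combination of Propositions \ref{adjoint-principal} and \ref{principal-struct} that you carry out (applying the latter to the adjoint vessel $\fV^*$, absorbing the signs, and then invoking $\cP=\cP^*$). Your handling of the monomial-ordering subtlety via the smallest-invariant-subspace characterization, and the self-contained adjoint variant of Proposition \ref{principal-struct}, are sound but optional elaborations of that same route.
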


Recall that an operator colligation is a collection $\left(A, \cH,\cE,\Phi,\sigma \right)$, with $A$ a bounded operator on $\cH$, $\sigma$ a bounded operator on $\cE$ and $\Phi$ a bounded operator from $\cH$ to $\cE$, such that the colligation condition holds, namely:
\begin{equation*}
A - A^* = i \Phi^* \sigma \Phi.
\end{equation*}
The characteristic function of an operator colligation is a complex operator-valued function, defined by:
\begin{equation} \label{eq:charfunc_coll}
S(z) = I_{\cE} - i \Phi^* \left( A - z I_{\cH} \right)^{-1} \Phi \sigma.
\end{equation}
A colligation is called minimal if  $\cH = \bigvee_{k=0}^{\infty} A^k \Phi^* \cE$ 
(which is equivalent to $A$ being completely non-selfadjoint, i.e., 
having no non-trivial reducing subspace, so that the restriction of $A$ thereto is selfadjoint). 
A minimal colligation is defined up to a unitary equivalence by the characteristic function 
(\cite[Thm.\ 4.2]{LivYan}, also \cite{BrLiv} for the case when $\dim \cE < \infty$).

Now we are ready to prove the main theorem of this section:

\begin{thm} \label{jft-determines-vessel}
Let $\fV = \left(\cH_{v},\cE,\rho_{v},\Phi,\sigma,\gamma,\gamma_{*}\right)$ and \\ 
$\fU = \left(\cH_{u},\cE,\rho_u,\Psi,\sigma,\gamma,\gamma_{*}\right)$ be two minimal $\fg$-vessels with the same external data. Assume furthermore that $\sigma(X)$ is invertible for some $X \in \fg$. If the characteristic functions of $\fC_v = \left(\frac{1}{i}\rho_{v}(X),\cH_v,\cE,\Phi,\sigma(X)\right)$ and $\fC_u = \left(\frac{1}{i}\rho_u(X),\cH_u,\cE,\Psi,\sigma(X)\right)$ coincide in a neighborhood of infinity, then $\fV$ is unitarily equivalent to $\fU$. 
\end{thm}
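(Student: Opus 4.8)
The plan is to recognize the two colligations $\fC_v$ and $\fC_u$ as minimal, invoke the classical uniqueness theorem to obtain a unitary $U$ intertwining $\rho_v(X)$ with $\rho_u(X)$, and then bootstrap this single intertwining up to the full representations using the vessel conditions.

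First I would note that, since $\fV$ and $\fU$ are minimal and $\sigma(X)$ is invertible, Proposition \ref{principal-struct} gives
\[
\cH_v = \cP = \bigvee_{k=0}^{\infty} \rho_v(X)^k \Phi^* \cE, \qquad \cH_u = \cP = \bigvee_{k=0}^{\infty} \rho_u(X)^k \Psi^* \cE.
\]
These are precisely the minimality conditions for the colligations $\fC_v$ and $\fC_u$ recalled before the theorem. As their characteristic functions coincide near infinity, the classical theorem for minimal colligations (\cite[Thm.\ 4.2]{LivYan}) yields a unitary $U \colon \cH_v \to \cH_u$ with $U \rho_v(X) U^{-1} = \rho_u(X)$ and $\Psi U = \Phi$, the coefficient operator $\sigma(X)$ being common to both colligations; taking adjoints, $U \Phi^* = \Psi^*$.

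Next I would use the vessel structure to convert this into an intertwining on the cyclic generators $\Phi^* \cE$. Taking the adjoint of the input vessel condition \eqref{eq:input} and multiplying by $\sigma(X)^{-1}$, exactly as in the proof of Proposition \ref{principal-struct}, gives for every $Y \in \fg$
\[
\rho_v(Y) \Phi^* = \rho_v(X) \Phi^* \sigma(Y) \sigma(X)^{-1} + \Phi^* \gamma(X \wedge Y)^* \sigma(X)^{-1},
\]
and the identical identity holds for $\fU$ with $\Phi, \rho_v$ replaced by $\Psi, \rho_u$, since the external data $\sigma, \gamma$ are shared. Applying $U$ to the $\fV$-identity and substituting $U \rho_v(X) = \rho_u(X) U$ and $U \Phi^* = \Psi^*$ reproduces verbatim the $\fU$-identity, whence $U \rho_v(Y) \Phi^* = \rho_u(Y) U \Phi^*$ for all $Y \in \fg$.

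The crux, which I expect to be the main obstacle, is propagating this from $\Phi^* \cE$ to all of $\cP = \cH_v$. I would prove by induction on $k$ the simultaneous statement that $U \rho_v(Y) \rho_v(X)^k \Phi^* = \rho_u(Y) \rho_u(X)^k \Psi^*$ for \emph{all} $Y \in \fg$, the case $k = 0$ being what was just shown. In the inductive step I would write, using the commutation relation \eqref{eq:comm},
\[
\rho_v(Y) \rho_v(X)^{k+1} = \rho_v(X) \rho_v(Y) \rho_v(X)^k - i \rho_v([X,Y]) \rho_v(X)^k,
\]
apply $U$, move $U$ past the leading $\rho_v(X)$ via $U \rho_v(X) = \rho_u(X) U$, and invoke the inductive hypothesis twice, once with $Y$ and once with $[X,Y] \in \fg$; the commutation relation on the $\fU$-side then reassembles the right-hand side into $\rho_u(Y) \rho_u(X)^{k+1} \Psi^*$. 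The point that makes the induction close is precisely that $[X,Y]$ is again an element of $\fg$, which is why the inductive statement must be carried over all $Y$ simultaneously rather than for a single fixed $Y$. Since the vectors $\rho_v(X)^k \Phi^* e$ have dense linear span in $\cP = \cH_v$ and every operator in sight is bounded, by continuity this establishes $U \rho_v(Y) = \rho_u(Y) U$ for every $Y \in \fg$; together with $\Psi U = \Phi$ this is exactly the unitary equivalence of $\fV$ and $\fU$ demanded by Definition \ref{vessel-equiv}.
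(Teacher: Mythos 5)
Your proposal is correct and follows essentially the same route as the paper's own proof: minimality of the colligations via Proposition \ref{principal-struct}, the classical uniqueness theorem to get the unitary $U$ intertwining $\rho_v(X)$ and $\rho_u(X)$, the adjointed input vessel condition together with invertibility of $\sigma(X)$ to obtain the intertwining on $\Phi^*\cE$, and then an induction on $k$ (carried over all $Y \in \fg$ simultaneously, exactly because $[X,Y]$ re-enters) to propagate it to $\rho(X)^k\Phi^*\cE$ and conclude by density. Your write-up is in fact slightly more explicit than the paper's at the induction step, where the paper only says ``proceeding by induction,'' but the argument is the same.
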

\begin{proof}
The vessels colligation condition implies the colligation conditions for $\fC_v$ and $\fC_u$. By Proposition \ref{principal-struct} and the assumption that the vessels are minimal, we get that:
\begin{equation*}
\cH_{v} = \bigvee_{k=0}^{\infty} \rho_v(X)^k \Phi^* \cE \mbox{ and } \cH_{u} = \bigvee_{k=0}^{\infty} \rho_u(X)^k \Psi^* \cE.
\end{equation*}
By \cite{BrLiv} we get that there exists an isometry $U \colon \cH_v \to \cH_u$, such that $\rho_u(X) = U \rho_v(X) U^{-1}$ and $\Phi = \Psi U$.

Next note that for every $Y \in \fg$, by the input vessel condition \eqref{eq:input} we get:
\begin{equation*}
\rho_v(Y) \Phi^* \sigma(X) = \rho_v(X) \Phi^* \sigma(Y) + \Phi^* \gamma(X \wedge Y)^*.
\end{equation*}
Using the isometry $U$, we get:
\begin{equation*}
\rho_v(Y) U^{-1} \Psi^* \sigma(X) = \rho_v(X) U^{-1} \Psi^* \sigma(Y) + U^{-1} \Psi^* \gamma(X \wedge Y)^*.
\end{equation*}
Premultiplying the equation by $U$ gives us:
\begin{equation*}
U \rho_v(Y) U^{-1} \Psi^* \sigma(X) = \rho_u(X) \Psi^* \sigma(Y) + \Psi^* \gamma(X \wedge Y).
\end{equation*}
Now applying the input vessel condition on $\fU$, we get:
\begin{equation*}
U \rho_v(Y) U^{-1} \Psi^* \sigma(X) = \rho_u(Y) \Psi^* \sigma(X).
\end{equation*}
Recall that $\sigma(X)$ is invertible, therefore:
\begin{equation*}
U \rho_v(Y) U^{-1} \Psi^* = \rho_u(Y) \Psi^*.
\end{equation*}

Now note that since $\rho_u$ is a representation we get that if $w \in \Psi^* \cE$, then:
\begin{align*}
\begin{split}
\rho_u(Y) \rho_u(X) w & = \rho_u(X) \rho_u(Y) w + i \rho_u([Y,X])w \\
& = \rho_u(X) U \rho_v(Y) U^{-1} w + i U \rho_v([Y,X]) U^{-1} w \\ 
& = U \rho_v(Y) U^{-1} \rho_u(X) w.
\end{split}
\end{align*}
This implies that $U \rho_v(Y) U^{-1}$ and $\rho_u(Y)$ agree on the space $\rho_u(X) \Psi^* \cE$. 
Proceeding by induction we get that $U \rho_v(Y) U^{-1}$ and $\rho_u(Y)$ 
agree on $\rho_u(X)^k \Psi^* \cE$ for every $k \geq 0$. Since the closed linear envelope of those spaces is all of $\cH_u$, we deduce that $U \rho_v(Y) U^{-1} = \rho_u(Y)$, for every $Y \in \fg$.
\end{proof}

\begin{rem}
It has been proved in \cite{Waks} that a Lie algebra colligation is determined by the complete characteristic function of the colligation.
\end{rem}

\section{Frequency Domain Analysis} \label{sec:freq}

\subsection{Representations and the Frequency Domain} \label{subsec:reps}
The frequency domain theory is the analysis of the system at hand with respect to frequency rather than time. To analyze the associated system in the commutative case one starts with looking at particular system trajectories:
\begin{align*}
\begin{split}
& u(t) = e^{\langle \lambda, t \rangle} u_0, \\
& x(t) = e^{\langle \lambda, t \rangle} x_0, \\
& y(t) = e^{\langle \lambda, t \rangle} y_0.
\end{split}
\end{align*}
Here $\lambda = \left( \lambda_1,\ldots,\lambda_l\right)$ and $t = \left(t_1,\ldots,t_l\right)$.  Here $\lambda$ represents the frequency and $u_0$, $x_0$ and $y_0$ control the amplitude and phase of our signals. In this case we deal with the commutative Lie group, $\RR^{\ell}$, 
and the irreducible unitary representations are exactly $\pi_{\lambda} (t) = e^{\langle \lambda, t \rangle}$, where $\lambda \in i \RR^l$. 
This analogy leads us to consider representations of $\fG$ (see \cite{BV}, \cite{LKMV} and \cite{V1} for more details).
In Example \ref{ex:comm-char} below we will recover the commutative case. 

Let $\cH_{\pi}$ be a Hilbert space and $\pi \colon \fG \to \cL(\cH_{\pi})$ be a strongly continuous representation of $\fG$ on $\cH_{\pi}$.

A vector $\xi \in \cH_{\pi}$ is called smooth if the map $$\tilde{\xi}(x) = \pi(x) \xi$$ is a smooth map on $\fG$ with values in $\cH_{\pi}$. It turns out that all smooth vectors form a $\pi$-invariant subspace of $\cH_{\pi}$. We shall denote the space of smooth vectors by $\chpinf$. By a result of G\r{a}rding the space $\chpinf$ is dense in $\cH_{\pi}$. Furthermore $\chpinf$ can be identified with a closed subspace of $C^{\infty}(\fG,\cH_{\pi})$ and thus can be endowed with a topology, finer than the one induced from $\cH_{\pi}$, that turns $\chpinf$ into a Frechet space. The representation $\pi$ restricts to a continuous representation of $\fG$ on $\chpinf$. This representation is smooth since the smooth vectors of the restricted representation are all of $\chpinf$. See \cite[Ch.\ 4.4]{WarnerG1} for more details.

The Lie algebra $\fg$ of $\fG$ admits a representation on $\chpinf$ via:
\begin{equation*}
\pi(X) \xi = \lim_{t \to 0} \frac{\pi(\expmap(t X)) \xi - \xi}{t}.
\end{equation*}
The representation $\pi$ on $\chpinf$ is continuous. For every $\xi \in \chpinf$ and $X \in \fg$ we compute $X \tilde{\xi}$:
\begin{multline*}
(X \tilde{\xi})(x) = \dfrac{d}{dt} \tilde{\xi}(x \expmap(t X)) |_{t=0} = \dfrac{d}{dt} \pi(x) \pi(\expmap(t X))|_{t=0}  \xi \\ = \pi(x) \pi(X) \xi = \widetilde{\pi(X) \xi}(x).
\end{multline*} 
Since $\chpinf$ is a Frechet space, its strong dual, $\chpinf^{\prime}$, i.e., the space of continuous linear functionals endowed with the topology of uniform convergence on bounded sets, is a complete $DF$-space (\cite[Thm.\ IV.3.1]{Grthndk1} and \cite[Thm.\ 34.2]{Treves}). We denote by $\chpminf$ the space of distribution vectors, i.e., continuous anti-linear functionals on $\chpinf$. This space is canonically isomorphic to $\chpinf^{\prime}$ and hence is a complete $DF$-space as well. Furthermore by \cite[Prop\ 4.4.1.9]{WarnerG1}, the contragredient representation of $\pi$ is well defined on all of $\chpminf$ and is smooth. Therefore the Lie algebra, $\fg$, admits a representation on $\chpminf$ that we shall also denote by $\pi$. Similar computation as above shows that:
\begin{equation*}
(X \tilde{\xi})(x) = \widetilde{\pi(X) \xi}(x)
\end{equation*}
for all $\xi \in \chpminf$.

Let us fix some topological vector space $\cV$ and a continuous embedding of $\cV$ into $\chpinf$, such that the image of $\cV$ is dense in $\chpinf$ and is invariant under the action of $\fG$ and of $\fg$. Therefore the anti-linear dual of $\cV$, $\cV^*$, contains $\chpminf$. We identify $\cV$ with a subspace of $\chpinf$ and thus of $C^{\infty}(\fG,\cH_{\pi})$, yet keep in mind its finer topology.

Let us fix some completed topological tensor product and denote it by $\bar{\otimes}$. The following computations do not depend on the tensor product chosen. In general it will be either the inductive or the projective completed tensor product. In many cases in practice, these notions coincide.

We will consider trajectories of the form:
\begin{equation*}
\begin{split}
\bu(g) = (\pi(g) \otimes I_{\cE}) \bu_0,\\
\bx(g) = (\pi(g) \otimes I_{\cH}) \bx_0,\\
\by(g) = (\pi(g) \otimes I_{\cE}) \by_0.
\end{split}
\end{equation*} 
Here $\bu_0,\by_0 \in \cV^* \bar{\otimes} \cE$ and $\bx_0 \in \cV^* \bar{\otimes} \cH$.

Plugging those trajectories into the system equations we get:
\begin{align} \label{eq:freq_system}
\begin{split}
& (\pi(X)\otimes I_{\cH} + \ihptens \rho(X) )\bx_0 = (\ihptens \Phi^* \sigma(X)) \bu_0,\\
& \by_0 = \bu_0 - (\ihptens \Phi) \bx_0.
\end{split}
\end{align}
Then the frequency domain strict input/output compatibility conditions become:
\begin{align} \label{eq:freq_compat}
\begin{split}
& \left( \pi(X)\otimes\sigma(Y) - \pi(Y)\otimes\sigma(X) + \ihptens \gamma(X \wedge Y) \right) \bu_0 = 0, \\
& \left( \pi(X)\otimes\sigma(Y)  - \pi(Y)\otimes\sigma(X) - \ihptens \gamma_{*}(X \wedge Y)\right)\by_0 = 0.
\end{split}
\end{align}
Hence (similarly to the commutative case, see \cite{BV}) we are led to define the operators:
\begin{align*}
\begin{split}
& U(\pi,X,Y) = \pi(X)\otimes\sigma(Y) - \pi(Y)\otimes\sigma(X) + \ihptens \gamma(X \wedge Y), \\
& U_*(\pi,X,Y) = \pi(X)\otimes\sigma(Y) - \pi(Y)\otimes\sigma(X) - \ihptens \gamma_{*}(X \wedge Y).
\end{split}
\end{align*}
We define for each representation $\pi$ the spaces:
\begin{align}
& \label{input-space} \bcE(\pi) = \left\{ \bu_0 \in \cV^* \bar{\otimes} \cE \colon U(\pi,X,Y) \bu_0 = 0 \mbox{, } \forall X,Y \in \fg\right\}, \\
& \label{output-space} \bcE_{*}(\pi) = \left\{ \by_0 \in \cV^* \bar{\otimes} \cE \colon U_*(\pi,X,Y) \by_0 = 0 \mbox{, } \forall X,Y \in \fg\right\}.
\end{align}

\subsection{Characteristic Function} \label{subsec:char_func}

The classical characteristic function of an operator colligation was described in the Introduction. We saw that one can obtain the classical function, by considering the associated system and passing to the frequency domain. In this subsection we will proceed similarly in our non-commutative setting with appropriate adjustments introduced in the previous subsection.

The following short remark will be used extensively in the following discussion.

\begin{rem} \label{rem:jcf_defined}

Assume that the frequency domain state system equation admits a unique solution for every input 
that solves the frequency domain strict input
compatibility conditions, i.e., for every $\bu_0$ satisfying \eqref{eq:freq_compat} there exists a unique $\bx_0$ satisfying the first equation in \eqref{eq:freq_system} for all $X \in \fg$. Using the second system equation we can assign to each element $\bu_0 \in \bcE$ an element of $\by_0 \in \cV^* \bar{\otimes} \cE$ via:
\begin{equation*}
\by_0 = \bu_0 - (\ihptens \Phi) \bx_0.
\end{equation*}
Just as in Proposition \ref{exit&uniq:systemeq_solut} we have that:
\begin{equation*}
U_*(\pi,X,Y) \by_0 = 0.
\end{equation*}

\end{rem}

\begin{dfn}
Fix a representation $\pi$ of $\fG$. Assume that for every $\bu_0 \in \bcE(\pi)$, there exists a unique $\bx_0 \in \cV^* \bar{\otimes} \cH$ and thus a unique $\by_0 \in \cV^* \bar{\otimes} \cE$, that solves \eqref{eq:freq_system}; as just noticed it is necessarily the case that $\by_0 \in \bcE_*(\pi)$. Then we define the joint characteristic function:
\begin{equation*}
\begin{array}{cccc}
S(\pi) \colon & \bcE(\pi) & \to & \bcE_*(\pi) \\
& \bu_0 & \mapsto & \by_0.
\end{array}
\end{equation*}
We say that the joint characteristic function is defined at $\pi$ \footnote{Note that the definition depends on the choice of $\cV$, however in most case this choice offers itself naturally and in the following discussion will be fixed once and for all.}.
\end{dfn}
Next we will discuss two cases in which the joint characteristic function is defined at $\pi$.

Assume that $X \in \fg$ is such that $\pi(X)\otimes I_{\cH} + \ihptens \rho(X)$ is invertible. Then we have a solution to the first equation of \eqref{eq:freq_system} in the following form:
\begin{equation*}
\bx_0 = \left(\pi(X)\otimes I_{\cH} + \ihptens \rho(X)\right)^{-1} \left(\ihptens \Phi^* \sigma(X)\right) \bu_0
\end{equation*}
Substituting it into the second equation we get:
\begin{multline*}
\by_0 = (\ihptens I_{\cE} - \left(\ihptens \Phi\right) \\ \times \left(\pi(X)\otimes I_{\cH} + \ihptens \rho(X)\right)^{-1} \left(\ihptens \Phi^* \sigma(X)\right)) \bu_0.
\end{multline*}
This computation leads us to define the complete characteristic function of the vessel:
\begin{multline} \label{eq:ctf}
W(\pi,X) = \ihptens I_{\cE} - \left(\ihptens \Phi\right) \\ \times \left(\pi(X)\otimes I_{\cH} + \ihptens \rho(X)\right)^{-1} \left(\ihptens \Phi^* \sigma(X)\right).
\end{multline}
Note that $W$ maps a pair consisting of a representation of $\fG$ and an element of $\fg$ to an operator on $\cV^* \bar{\otimes} \cE$.

Assume now that:
\begin{itemize}
\item there exists an $X \in \fg$, such that $\pi(X)\otimes I_{\cH} + \ihptens \rho(X)$ is invertible;

\item for every $\bu_0 \in \bcE(\pi)$ there exists $\bx_0 \in \cV^* \bar{\otimes} \cH$, 
such that for every $Y \in \fg$ (whether the ``resolvent'' 
$\left(\pi(Y)\otimes I_{\cH} + \ihptens \rho(Y) \right)^{-1}$ exists or not) we have:
\begin{equation*}
\left(\pi(Y)\otimes I_{\cH} + \ihptens \rho(Y) \right) \bx_0 = \left(\ihptens \Phi^* \sigma(Y)\right) \bu_0.
\end{equation*} 
\end{itemize}
It follows that for every $\bu_0 \in \bcE(\pi)$, 
$\bx_0$ is uniquely defined and thus the joint characteristic function is defined at $\pi$ and 
$S(\pi) = W(\pi,X)|_{\bcE(\pi)}$, which is independent of the choice of $X$, such that the resolvent exists. 

Next we study some properties of the joint characteristic function in terms of the cohomology of the Lie algebra $\fg$. First we need a technical lemma.

\begin{lem} \label{phi*gamma}
We have the following identities for every $X,Y \in \fg$:
\begin{equation} \label{phi*gamma:eq1}
\Phi^* \gamma(X \wedge Y) + \rho(Y)\Phi^*\sigma(X) - \rho(X)\Phi^*\sigma(Y) + \Phi^* \sigma([X,Y]) = 0.
\end{equation}

\begin{multline} \label{phi*gamma:eq2}
\left(\ihptens \Phi^*\right) U(\pi,X,Y) \\ =  \left(\pi(Y) \otimes I_{\cH} + \ihptens \rho(Y)\right) \left(\ihptens \Phi^* \sigma(X)\right) \\ -
 \left(\pi(X) \otimes I_{\cH} + \ihptens \rho(X)\right) \left(\ihptens \Phi^* \sigma(Y)\right) \\ - \ihptens \Phi^* \sigma([X,Y]).
\end{multline}
\end{lem}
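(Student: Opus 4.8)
The plan is to establish the two identities in sequence, treating \eqref{phi*gamma:eq1} as the purely operator-theoretic input that feeds into \eqref{phi*gamma:eq2}.

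First I would prove \eqref{phi*gamma:eq1} by taking the adjoint of the input vessel condition \eqref{eq:input}. Reading \eqref{eq:input} as $\sigma(X)\Phi\rho(Y)^* - \sigma(Y)\Phi\rho(X)^* = \gamma(X\wedge Y)\Phi$ and passing to adjoints, while using that $\sigma(X)^*=\sigma(X)$, yields $\rho(Y)\Phi^*\sigma(X) - \rho(X)\Phi^*\sigma(Y) = \Phi^*\gamma(X\wedge Y)^*$. The only remaining step is to convert $\gamma(X\wedge Y)^*$ into $\gamma(X\wedge Y)$, which is exactly what \eqref{eq:2nd} supplies: $\gamma(X\wedge Y)^* = \gamma(X\wedge Y) - i\sigma([X,Y])$. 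Substituting this relation and moving the scalar term to the other side gives \eqref{phi*gamma:eq1}.

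Next, for \eqref{phi*gamma:eq2} I would apply $\ihptens\Phi^*$ to the definition of $U(\pi,X,Y)$ and distribute it across the three summands. Since $\Phi^*$ acts only on the second tensor factor, this produces $\pi(X)\otimes\Phi^*\sigma(Y) - \pi(Y)\otimes\Phi^*\sigma(X) + i\,\ihptens\Phi^*\gamma(X\wedge Y)$. I would then replace the last term using \eqref{phi*gamma:eq1}, which rewrites $\Phi^*\gamma(X\wedge Y)$ as a combination of $\rho(Y)\Phi^*\sigma(X)$, $\rho(X)\Phi^*\sigma(Y)$, and $\Phi^*\sigma([X,Y])$. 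Finally I would reorganize the resulting terms by factoring the appropriate pairs into the resolvent-type operators $i\pi(Y)\otimes I_{\cH} + \ihptens\rho(Y)$ and $i\pi(X)\otimes I_{\cH} + \ihptens\rho(X)$ acting on $\ihptens\Phi^*\sigma(X)$ and $\ihptens\Phi^*\sigma(Y)$ respectively, leaving the scalar term $-\ihptens\Phi^*\sigma([X,Y])$ untouched.

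The computation is entirely mechanical, so I do not expect a genuine obstacle; the only thing demanding care is the bookkeeping of the factors of $i$ generated when the two operator products on the right-hand side of \eqref{phi*gamma:eq2} are expanded and matched against the distributed form, together with the conceptual point that the term occurring in $U(\pi,X,Y)$ is $\gamma$ and not $\gamma^*$, so that \eqref{eq:2nd} must be invoked at precisely the right place in the first identity.
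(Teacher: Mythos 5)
Your proposal is correct and follows essentially the same route as the paper: the first identity is obtained by taking the adjoint of the input vessel condition \eqref{eq:input} (using selfadjointness of $\sigma$) and then invoking \eqref{eq:2nd} to trade $\gamma(X\wedge Y)^*$ for $\gamma(X\wedge Y)$, and the second identity follows by distributing $\ihptens\Phi^*$ over the definition of $U(\pi,X,Y)$, substituting \eqref{phi*gamma:eq1}, and regrouping into the resolvent-type factors. The bookkeeping of the factors of $i$ that you flag works out exactly as you describe, so no gap remains.
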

\begin{proof}
To prove the first assertion we take the adjoint of \eqref{eq:input} and get:
\begin{equation} \label{phi*gamma-1}
\Phi^* \gamma(X \wedge Y)^* = \rho(Y) \Phi^* \sigma(X) - \rho(X) \Phi^* \sigma(Y).
\end{equation}
Next we apply $\Phi^*$ to \eqref{eq:2nd} and get:
\begin{equation*}
\Phi^* \gamma(X \wedge Y) + \Phi^* \gamma(X \wedge Y)^* + \Phi^* \sigma([X,Y]) = 0.
\end{equation*}
Now we plug in \eqref{phi*gamma-1} to obtain the first assertion.

Now to prove the second assertion, we write out explicitly the left hand side of \eqref{phi*gamma:eq2}:
\begin{equation*}
LHS = \pi(X)\otimes\Phi^* \sigma(Y) - \pi(Y)\otimes\Phi^* \sigma(X) + i \ihptens \Phi^* \gamma(X \wedge Y).
\end{equation*}
Now we apply \eqref{phi*gamma:eq1} proved above to get:
\begin{multline*}
LHS = \pi(X)\otimes\Phi^* \sigma(Y) - \pi(Y)\otimes\Phi^* \sigma(X) \\ 
- \ihptens \rho(Y) \Phi^* \sigma(X) +  \ihptens \rho(X) \Phi^* \sigma(Y) \\ - \ihptens \Phi^* \sigma([X,Y]).
\end{multline*}
Collecting the terms we get the desired equality.
\end{proof}

Now recall that $M_{\pi} = \cV^* \bar{\otimes} \cH$ is a $\fg$-module via the action:
\begin{equation*}
X \cdot (\xi \otimes h) = \pi(X) \xi \otimes h - i \xi \otimes \rho(X) h. 
\end{equation*}
This action extends uniquely to an action of $\cfg$, the complexification of $\fg$, on $M_{\pi}$. Furthermore the linear map $\sigma$ can be extended uniquely to $\cfg$. We shall abuse the notation and denote the extension by $\sigma$ as well.

Consider the function $f_{\bu_0}(X) = (\ihptens (\Phi^* \sigma(X)))\bu_0$, for $\bu_0 \in \cV^* \bar{\otimes}\cE$. This is a map from $\cfg$ to $M_{\pi}$ and thus a $1$-cochain of the Lie algebra $\cfg$ with coefficients in $M_{\pi}$. Next we note that by Lemma \ref{phi*gamma}, we have that:
\begin{equation*}
d(f_{\bu_0})(X,Y) = (\ihptens \Phi^*) U(\pi,X,Y) \bu_0.
\end{equation*}
Therefore if $\bu_0 \in \bcE(\pi)$ then in particular $f_{\bu_0}$ is a $1$-cocycle \footnote{One can also define $\bcE(\pi)$ and $\bcE_*(\pi)$ using the non-strict vessel compatibility conditions, namely $(\ihptens \Phi^*) U(\pi,X,Y)\bu_0 = 0$ and $(\ihptens \Phi^*) U_*(\pi,X,Y)\bu_0 = 0$. In that case we have that $\bu_0 \in \bcE(\pi)$ if and only if $f_{\bu_0}$ is a $1$-cocycle. Furthermore, in case the vessel is strict the two definitions coincide}.

Let us assume that $H^0(\cfg,M_{\pi}) = H^1(\cfg,M_{\pi}) = 0$. Then for each $\bu_0 \in \bcE(\pi)$ there exists a unique $\bx_0 \in M_{\pi}$, such that $(d \bx_0)(X) = f_{\bu_0}(X)$, for every $X \in \cfg$. Let us expand this equality:
\begin{equation*}
(\pi(X) \otimes I_{\cH} + \ihptens \rho(X) ) \bx_0 = \ihptens \Phi^* \sigma(X) \bu_0.
\end{equation*}
This implies that there exists a unique solution, $\bx_0$, to the system equations. Hence we have a linear map that maps $\bcE(\pi)$ to $\bcE_*(\pi)$, as per Remark \ref{rem:jcf_defined}. Now if there exists $X \in \fg$ such that $\pi(X) \otimes I_{\cH} + \ihptens \rho(X)$ is invertible, then $W(\pi,X)|_{\bcE(\pi)}$ is precisely this linear map. Furthermore it is independent of the choice of $X$ that makes $\pi(X) \otimes I_{\cH} + \ihptens \rho(X)$ invertible. 

 Hence we have proved the following theorem:
\begin{thm} \label{koszul-two-term}
If $H^0(\fg,M_{\pi}) = H^1(\fg,M_{\pi}) = 0$, then the joint characteristic function is defined at $\pi$. Furthermore, if there exists $X \in \fg$, such that $\pi(X) \otimes I_{\cH} + \ihptens \rho(X)$ is invertible, then $W(\pi,X)|_{\bcE(\pi)}$ is independent of the choice of $X$ and $S(\pi) = W(\pi,X)|_{\bcE(\pi)}$.
\end{thm}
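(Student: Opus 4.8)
The plan is to recognize that this theorem is the payoff of the cohomological reformulation assembled in the paragraphs just above its statement, so the proof is largely an assembly of pieces already in place. First I would recall the Chevalley--Eilenberg complex of $\cfg$ with coefficients in the module $M_{\pi} = \cV^* \bar{\otimes} \cH$, whose differential on $0$- and $1$-cochains is $(d\bx_0)(X) = X \cdot \bx_0$ and $(df)(X,Y) = X \cdot f(Y) - Y \cdot f(X) - f([X,Y])$, for the action $X \cdot \bx_0 = \left(\pi(X) \otimes I_{\cH} - i \ihptens \rho(X)\right) \bx_0$. The essential input, recorded before the theorem, is that for $\bu_0 \in \cV^* \bar{\otimes} \cE$ the $1$-cochain $f_{\bu_0}(X) = \ihptens \left(\Phi^* \sigma(X)\right) \bu_0$ has coboundary $d(f_{\bu_0})(X,Y) = \left(\ihptens \Phi^*\right) U(\pi,X,Y) \bu_0$ by Lemma~\ref{phi*gamma}; hence membership $\bu_0 \in \bcE(\pi)$ forces $f_{\bu_0}$ to be a $1$-cocycle.

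Next I would extract existence and uniqueness directly from the two vanishing hypotheses. Since $H^1(\cfg,M_{\pi}) = 0$, the cocycle $f_{\bu_0}$ is a coboundary, so there is some $\bx_0 \in M_{\pi}$ with $d\bx_0 = f_{\bu_0}$; since $H^0(\cfg,M_{\pi}) = 0$ is exactly the vanishing of the invariants $\ker d \subset M_{\pi}$, this $\bx_0$ is unique. Writing out $d\bx_0 = f_{\bu_0}$ gives $\left(\pi(X) \otimes I_{\cH} - i \ihptens \rho(X)\right) \bx_0 = \ihptens \Phi^* \sigma(X) \bu_0$ for every $X$, and since the scalar $\frac{1}{i}$ satisfies $\left(i \pi(X) \otimes I_{\cH} + \ihptens \rho(X)\right)\frac{1}{i} = \pi(X) \otimes I_{\cH} - i \ihptens \rho(X)$, the rescaled state $\frac{1}{i}\bx_0$ is precisely the unique solution of the first equation of \eqref{eq:freq_system}. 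Feeding this state into the second equation produces $\by_0$, and Remark~\ref{rem:jcf_defined} guarantees $\by_0 \in \bcE_*(\pi)$; thus $\bu_0 \mapsto \by_0$ is well defined on all of $\bcE(\pi)$, i.e.\ $S(\pi)$ is defined at $\pi$. A small point to record here is that the $H^*(\fg,M_{\pi})$ in the statement agrees with the $H^*(\cfg,M_{\pi})$ used in the argument, because $M_{\pi}$ is a complex module and the action (together with $\sigma$) extends to $\cfg$.

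For the second assertion, suppose some $X \in \fg$ makes $i \pi(X) \otimes I_{\cH} + \ihptens \rho(X)$ invertible. Then the first frequency-domain equation for that $X$ is solved explicitly by the resolvent, and substituting into the second equation yields $\by_0 = W(\pi,X)\bu_0$ with $W(\pi,X)$ as in \eqref{eq:ctf}. By the uniqueness established above, the state $\bx_0$ (hence $\by_0$) does not depend on how it was produced, so $W(\pi,X)|_{\bcE(\pi)}$ coincides with $S(\pi)$ for every such $X$ and is in particular independent of the choice of $X$.

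The hard part is not any single computation but the bookkeeping that links the three descriptions in play --- the abstract coboundary equation $d\bx_0 = f_{\bu_0}$, the frequency-domain state equation, and the explicit resolvent formula $W(\pi,X)$ --- keeping straight the factors of $i$ and the distinction between $i \pi(X) \otimes I_{\cH} + \ihptens \rho(X)$ and its $\frac{1}{i}$-rescaling. I expect the most delicate point to be confirming that the cohomological existence-and-uniqueness genuinely transfers to the topological module $M_{\pi} = \cV^* \bar{\otimes} \cH$, so that $d\bx_0 = f_{\bu_0}$ carries its intended meaning and $S(\pi)$ is a bona fide map $\bcE(\pi) \to \bcE_*(\pi)$.
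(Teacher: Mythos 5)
Your proposal is correct and follows essentially the same route as the paper: the paper's own argument is exactly the cohomological assembly in the paragraphs preceding the theorem --- Lemma~\ref{phi*gamma} identifies $d(f_{\bu_0})$ with $(\ihptens \Phi^*) U(\pi,X,Y)\bu_0$, the vanishing of $H^1$ and $H^0$ gives existence and uniqueness of $\bx_0$ with $d\bx_0 = f_{\bu_0}$, the rescaling by $\frac{1}{i}$ converts this into the frequency-domain state equation, and Remark~\ref{rem:jcf_defined} plus uniqueness yields $S(\pi) = W(\pi,X)|_{\bcE(\pi)}$ independently of $X$. Your additional remark reconciling $H^*(\fg,M_{\pi})$ with $H^*(\cfg,M_{\pi})$ is a point the paper leaves implicit, but it does not change the argument.
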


\begin{rem}
In the definition of the joint characteristic function we may replace $\fg$ by $\cfg$. Furthermore, we may replace $X \in \fg$ by $X \in \cfg$ in both the assumption and the conclusion of Theorem \ref{koszul-two-term}. This is a non-vacuous generalization, since even in the case of two commuting operators we can have that the resolvent exists for no $X \in \fg$ but it does exist for some $X \in \cfg$, see for example \cite[Ex.\ 6.1]{LKMV}.
\end{rem}

\begin{rem}
Following J. L. Taylor (\cite{Tay1972a}) we say that $\pi$ belongs to the resolvent set of $\rho$ if the Koszul cochain complex is exact for the module $M_{\pi}$. The above theorem shows that if $\pi$ is in the resolvent set for $\rho$ then the joint characteristic function is defined at $\pi$.
\end{rem}

\begin{rem} \label{rem:h0_must_die}
Note that if $H^0(\fg,M_{\pi}) \neq 0$ there can not exist a unique solution since the kernel of the differential will parametrize the different solutions if one exists. Therefore if the characteristic function is defined at $\pi$ then we must have that $H^0(\fg,M_{\pi}) = 0$.
\end{rem}

The statement of Theorem \ref{koszul-two-term} contains two conditions, the first one is the vanishing of the zeroth and first cohomology groups and the second one is the existence of the resolvent. The question is whether there exists a connection between the two conditions. It is well known that the first condition does not imply the second, see for example \cite[Ex.\ 6.2]{LKMV}. The following proposition will demonstrate that if there exists an $X \in \fg$ that is $\ad$-nilpotent and the resolvent exists for $X$, then the first condition is satisfied.

We consider the general case of a Lie algebra module.
\begin{prop} \label{prop:ad-nilpotent-cohom}
 Let $M$ be a $\cfg$-module, via some representation $\nu$. Assume that there exists an $ad$-nilpotent element $X \in \cfg$, such that $\nu(X)$ is invertible. Then $H^0(\fg,M) = H^1(\fg,M) = 0$.
\end{prop}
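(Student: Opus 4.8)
The plan is to work throughout with the complexified Lie algebra $\cfg$ rather than with $\fg$ itself. Since $M$ is a complex vector space, every $\RR$-linear alternating cochain $\wedge^n \fg \to M$ extends uniquely to a $\CC$-linear cochain $\wedge^n \cfg \to M$, and this identification is compatible with the Chevalley--Eilenberg differential; hence $H^0(\fg,M) \cong H^0(\cfg,M)$ and $H^1(\fg,M) \cong H^1(\cfg,M)$, and it suffices to prove the vanishing over $\cfg$. The statement $H^0(\cfg,M) = 0$ is then immediate: an element of $H^0$ is some $m \in M$ with $\nu(Z)m = 0$ for all $Z \in \cfg$, and taking $Z = X$ together with the invertibility of $\nu(X)$ forces $m = 0$.

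For $H^1$ I would begin from an arbitrary $1$-cocycle $c \colon \cfg \to M$, which satisfies $c([Z,W]) = \nu(Z)c(W) - \nu(W)c(Z)$ for all $Z,W \in \cfg$. The natural candidate for a primitive is $m := \nu(X)^{-1}c(X)$, and I would measure the failure of $c$ to equal the coboundary $dm$ by introducing the cochain $e(Z) := c(Z) - \nu(Z)m$ (so that $e(X) = 0$ by construction). Specializing the cocycle identity to $W = X$ and inserting the module relation $\nu(Z)\nu(X) - \nu(X)\nu(Z) = \nu([Z,X]) = -\nu(\ad_X Z)$, a short computation should make every term containing $c(X)$ cancel and collapse to the intertwining relation
\begin{equation*}
\nu(X)\,e(Z) = e(\ad_X Z), \qquad Z \in \cfg.
\end{equation*}

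The remaining step is an iteration: applying this relation $k$ times yields $\nu(X)^k e(Z) = e\bigl((\ad_X)^k Z\bigr)$ for every $Z$. Here the hypothesis that $X$ is $\ad$-nilpotent enters decisively: choosing $N$ with $(\ad_X)^N = 0$ gives $\nu(X)^N e(Z) = e(0) = 0$, and since $\nu(X)$ (hence $\nu(X)^N$) is invertible we conclude $e \equiv 0$, i.e.\ $c(Z) = \nu(Z)m$ for all $Z$. Thus $c = dm$ is a coboundary and $H^1(\cfg,M) = 0$.

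I expect the only genuine obstacle to be the bookkeeping that produces the clean intertwining relation: one must carry the commutator $[\nu(Z),\nu(X)] = \nu([Z,X])$ correctly through the cocycle identity so that the $c(X)$-terms cancel and the result depends on $c$ only through $e$. Everything else is formal --- the comparison of real and complex Lie algebra cohomology and the $H^0$ computation --- or a routine induction, and the precise role of $\ad$-nilpotency is exactly to guarantee that this induction terminates.
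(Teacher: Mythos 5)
Your proof is correct and follows essentially the same route as the paper's: the same candidate primitive $m = \nu(X)^{-1}c(X)$, with your intertwining relation $\nu(X)\,e(Z) = e(\ad_X Z)$ being exactly the paper's identity $f(Y) - \nu(Y)m = \nu(X)^{-1}\bigl(f([X,Y]) - \nu([X,Y])m\bigr)$ repackaged via the error cochain $e$, followed by the same terminating iteration using $\ad$-nilpotency. Your explicit reduction of $H^*(\fg,M)$ to $H^*(\cfg,M)$ makes precise a point the paper leaves implicit (its cocycle is defined on $\fg$ while $X$ lies in $\cfg$), but the argument is otherwise identical.
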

\begin{proof}
First note that if $m \in M$ is such that $d m = 0$ it implies that $\nu(X) m = 0$, but since $\nu(X)$ is invertible we get that $m = 0$. Hence $H^0(\fg,M) = 0$. Another way to see this fact is by noting that $H^0(\fg,M) = M^{\fg}$, i.e., all elements annihilated by the Lie algebra action. Since $\nu(X)$ is invertible, there are no such.

Now let $f \colon \fg \to M$ be a $1$-cocycle, in other words we have that for every $Y,Z \in fg$:
\begin{equation*}
\nu(Y) f(Z) - \nu(Z) f(Y) - f([Y,Z]) = 0.
\end{equation*}
Then taking $Z = X$, we get that for every $Y \in \fg$, $Y \neq X$:
\begin{equation} \label{eq:f(Y)}
f(Y) = \nu(X)^{-1} \left( \nu(Y) f(X) + f([X,Y]) \right).
\end{equation}
We define $m = \nu(X)^{-1} f(X)$ and show that $d m = f$, in other words that $\nu(Y) m = f(Y)$, for all $Y \in \fg$.

Note that for every $Y \in \fg$:
\begin{equation} \label{eq:comm-rel-1}
[\nu(X)^{-1}, \nu(Y)] = - \nu(X)^{-1} \nu([X,Y]) \nu(X)^{-1};
\end{equation}
this follows from the fact that $\nu$ is a representation of $\fg$ and that for two elements of an associative unital algebra $a$ and $b$, with $a$ a unit, we have:
\begin{equation*} 
a^{-1} [a,b] a^{-1} = a^{-1} (a b - b a) a^{-1} = -[ a^{-1},b].
\end{equation*}
Using \eqref{eq:comm-rel-1} we can rewrite \eqref{eq:f(Y)} as
\begin{equation*}
f(Y) = \nu(Y) m + \nu(X)^{-1}\left( f([X,Y]) - \nu([X,Y]) m \right).
\end{equation*}
Hence $f(Y) = \nu(Y)m$ if and only if $f([X,Y]) = \nu([X,Y]) m$.
Replacing $Y$ with $[X,Y]$ we get inductively that $f(Y) = \nu(Y) m$ 
if and only if $f(\operatorname{ad}_X^k(Y)) = \nu(\operatorname{ad}_X^k(Y)) m$. 
Recalling that $X$ is $ad$-nilpotent we get that the second statement is true trivially for $k$ large enough. 

We conclude that $f$ is in fact a $1$-coboundary, and since $f$ was an arbitrary $1$-cocycle we get that $H^1(\fg,M) = 0$.
\end{proof}

Note that combining Theorem \ref{koszul-two-term} and the above proposition we get:

\begin{thm} \label{prop-ad_nilpotent_independence}
Assume that there exists an $\ad$-nilpotent element $X \in \cfg$, such that 
$\pi(X) \otimes I_{\cH} + \ihptens \rho(X)$ is invertible. 
Then the joint characteristic function is defined at $\pi$ and $S(\pi) = W(\pi,Y)|_{\bcE(\pi}$, 
where the restriction is independent of the choice of $Y \in \cfg$, such that the resolvent $\left(\pi(Y) \otimes I_{\cH} + \ihptens \rho(Y)\right)^{-1}$ exists.
\end{thm}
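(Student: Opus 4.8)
The plan is to recognize this theorem as a direct combination of Proposition \ref{prop:ad-nilpotent-cohom} with Theorem \ref{koszul-two-term}, applied to the specific module $M_{\pi} = \cV^* \bar{\otimes} \cH$. The only genuine work is to reconcile the two invertibility hypotheses and to check that the $\ad$-nilpotent element $X \in \cfg$ supplied here is exactly the kind of element that Proposition \ref{prop:ad-nilpotent-cohom} requires.

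First I would recall that $M_{\pi}$ carries the $\cfg$-action $\nu(X) = \pi(X) \otimes I_{\cH} - i \ihptens \rho(X)$, and observe the scalar identity $i\,\nu(X) = i \pi(X) \otimes I_{\cH} + \ihptens \rho(X)$. Since multiplication by the nonzero scalar $i$ preserves invertibility, the hypothesis that $i \pi(X) \otimes I_{\cH} + \ihptens \rho(X)$ is invertible is \emph{equivalent} to $\nu(X)$ being invertible. Thus the single assumption of the theorem simultaneously supplies an $\ad$-nilpotent $X \in \cfg$ with $\nu(X)$ invertible (the input to Proposition \ref{prop:ad-nilpotent-cohom}) and an element for which the resolvent $\left(i \pi(X) \otimes I_{\cH} + \ihptens \rho(X)\right)^{-1}$ exists (the input to the ``furthermore'' clause of Theorem \ref{koszul-two-term}).

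With these identifications in hand, I would apply Proposition \ref{prop:ad-nilpotent-cohom} with $M = M_{\pi}$ and its action $\nu$ to conclude $H^0(\fg, M_{\pi}) = H^1(\fg, M_{\pi}) = 0$; note that the conclusion of that proposition is stated precisely in terms of the $\fg$-cohomology that Theorem \ref{koszul-two-term} demands, so there is no mismatch of cohomology theories at this junction. The vanishing of these two groups then feeds directly into Theorem \ref{koszul-two-term} (invoked in its $\cfg$-version, as licensed by the remark immediately following it, since our $X$ lies in $\cfg$ rather than $\fg$): it gives that the joint characteristic function is defined at $\pi$, that $W(\pi, Y)|_{\bcE(\pi)}$ is independent of the choice of $Y \in \cfg$ for which $i \pi(Y) \otimes I_{\cH} + \ihptens \rho(Y)$ is invertible, and that $S(\pi) = W(\pi, Y)|_{\bcE(\pi)}$. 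The $\ad$-nilpotent $X$ itself is one such admissible $Y$, so the family of valid choices is nonempty.

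The main obstacle is, in truth, purely bookkeeping: verifying the scalar relation $i\,\nu = i\pi \otimes I_{\cH} + \ihptens \rho$ so that the two invertibility statements are seen to coincide, and being careful to cite the complexified versions of the ingredient results so that an element of $\cfg$ (rather than $\fg$) is permitted throughout. No new estimate or construction is needed beyond what the two cited results already provide.
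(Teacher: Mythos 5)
Your proposal is correct and is exactly the paper's own route: the theorem appears in the paper with no separate proof, introduced by the phrase ``combining Theorem \ref{koszul-two-term} and the above proposition we get,'' and your write-up simply supplies the routine bookkeeping (the identity $i\,\nu(X) = i\pi(X)\otimes I_{\cH} + \ihptens\rho(X)$ relating the module action to the resolvent operator, and the appeal to the remark allowing $X \in \cfg$) that the paper leaves implicit.
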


\begin{rem} \label{rem:intertwining}
Theorem \ref{prop-ad_nilpotent_independence} implies that if there exists an $\ad$-nilpotent element $T \in \cfg$, such that the resolvent exits, then in particular $W(\pi,T)$ map $\bcE(\pi)$ to $\bcE_*(\pi)$. This fact can be proved directly using the following generalized intertwining equality for the Lie algebra vessels case (cf \cite[Thm.\ 8.4.2]{LKMV} for the commutative case):
\begin{multline} \label{eq:intertwining}
U_*(\pi,X,Y)W(\pi,Z) - C(\pi,X,Y,Z) - D(\pi,X,Y,Z) = U(\pi,X,Y) \\
+ B(\pi,Y)^* A(\pi,Z)^{-1}\left(\ihptens \Phi^*\right) U(\pi,Z,X) \\
+ B(\pi,X)^* A(\pi,Z)^{-1}\left(\ihptens \Phi^*\right) U(\pi,Y,Z).
\end{multline}
Here we write:
\begin{equation*}
A(\pi, X) = (\pi(X) \otimes I_{\cH} + \ihptens \rho(X)),\quad 
B(\pi,X) = \ihptens \Phi^* \sigma(X),
\end{equation*}
\begin{multline*}
C(\pi,X,Y,Z) = B(\pi,X)^* A(\pi,Z)^{-1} A(\pi,[Z,Y]) A(\pi,Z)^{-1} B(\pi,Z) \\
- B(\pi,Y)^* A(\pi,Z)^{-1} A(\pi,[Z,X]) A(\pi,Z)^{-1} B(\pi,Z),
\end{multline*}
\begin{multline*}
D(\pi,X,Y,Z) = i B(\pi,Y)^* A(\pi,Z)^{-1} B(\pi,[Z,X])\\
- i B(\pi,X)^* A(\pi,Z)^{-1} B(\pi,[Z,Y]).
\end{multline*}
The proof is a straightforward computation using the vessel conditions and is omitted. 
Now assume that there exists an $\ad$-nilpotent $T \in \cfg$, such that $A(\pi,T)$ is invertible. Then for every $X,Y \in \fg$ and every $Z \in \cfg$ such that $A(\pi,Z)$ is invertible, we have that:
\begin{equation} \label{eq:kernel_c+d}
\bcE(\pi) \subseteq \operatorname{Ker} \left( C(\pi,X,Y,Z) + D(\pi,X,Y,Z) \right).
\end{equation}
Indeed for every $\bu \in \bcE(\pi)$ let us set:
\begin{equation*}
\bx = A(\pi,T)^{-1} (\ihptens \Phi^* \sigma(T)) \bu.
\end{equation*}
By Proposition \ref{prop:ad-nilpotent-cohom} we know that $H^1(\fg,M_{\pi}) = 0$ and hence for every $X \in \fg$, we have:
\begin{equation*}
A(\pi,X) \bx = B(\pi,X) \bu.
\end{equation*}
Hence:
\begin{multline*}
C(\pi,X,Y,Z) \bu = B(\pi,X)^* A(\pi,Z)^{-1} A(\pi,[Z,Y]) \bx \\ - B(\pi,Y)^* A(\pi,Z)^{-1} A(\pi,[Z,X]) \bx \\
 = B(\pi,X)^* A(\pi,Z)^{-1} B(\pi,[Z,Y]) \bu \\ - B(\pi,Y)^* A(\pi,Z)^{-1} B(\pi,[Z,X]) \bu \\
 = - D(\pi,X,Y,Z) \bu.
\end{multline*}
Thus combining \eqref{eq:intertwining} and \eqref{eq:kernel_c+d} we conclude that 
if there exists an $\ad$-nilpotent element $T \in \cfg$ such that $A(\pi,T)$ is invertible,
then for every $Z \in \cfg$, such that the resolvent $A(\pi,Z)^{-1}$ exists, 
we have that $W(\pi,Z)$ maps $\bcE(\pi)$ into $\bcE_*(\pi)$.
\end{rem}

\begin{ex} \label{ex:comm-char}
Assume $\fg \cong \RR^n$ and thus $\fG \cong \RR^n$ as well. 
Take  $\pi$ to be a (not necessarily unitary) character of $\fG$, 
namely $\pi(t) = \pi_{\lambda}(t) = e^{i \langle \lambda, t \rangle}$, 
where $\lambda=(\lambda_1,\ldots,\lambda_n) \in \CC^n$;
if $X_j$ is the standard basis for $\fg$, then $\pi(X_j) = i \lambda_j$ as an operator on $\CC$. 
The representation $\pi$ is finite dimensional hence smooth. 
Now the system of frequency domain strict input compatibility conditions becomes ($1 \leq j < k \leq n$):
$$
\left( \lambda_j \sigma_k - \lambda_k \sigma_j + \gamma_{jk} \right) \bu_0 = 0;
$$
here $\sigma_j=\sigma(X_j)$, $\gamma_{jk} = \gamma(X_j \wedge X_k)$, and $\bu_0 \in \cE$.
Therefore 
$$
\bcE(\pi)=\bcE(\lambda) 
= \bigcap_{1 \leq j < k \leq n} \ker\left( \lambda_j \sigma_k - \lambda_k \sigma_j + \gamma_{jk} \right).
$$
Set $\rho(X_j) = A_j$, then for every $X  = \sum \xi_jX_j \in \cfg \cong \CC^n$, 
we have $\rho(X) = \sum_j \xi_j A_j$. Writing out the formula for the characteristic function we get:
$$
S(\pi) = S(\lambda) = \left.\left(I - \Phi \big(\sum_j \xi_j A_j + (\sum_j \xi_j\lambda_j) I\big)^{-1} 
\Phi^* \big( \sum_j \xi_j \sigma_j\big)\right)\right|_{\bcE(\lambda)}.
$$
The joint characteristic function is defined at $\lambda$ whenever there exists
$X$ such that the resolvent exists --- and the above restriction is independent of the choice of $X$ --- 
since the algebra is commutative and hence every $X \in \cfg$ is $\ad$-nilpotent. 

Thus we have recovered the classical frequency domain theory for commutative operator vessels as in \cite{LKMV}. Note that:
\begin{itemize}
	\item The joint characteristic function is defined at $\lambda$ whenever $\lambda$ is not in the Taylor joint spectrum of $A_1,\ldots,A_n$, though if $\cH$ is infinite-dimensional this does not imply that the resolvent exists for some $X$; for the discussion of some cases when it does see \cite[Ch.\ 5--6]{LKMV}.
	
	\item When $n \geq 2$ and $\dim \cE < \infty$, the space $\bcE(\lambda)$ is zero off the input discriminant variety in $\CC^n$ defined by the equations:
	\begin{equation*}
	\det\left( \lambda_j \sigma_k - \lambda_k \sigma_j + \gamma_{jk} \right) = 0, 
	\end{equation*}
	and similarly for the space $\bcE_*(\lambda)$, see \cite[Ch.\ 7]{LKMV} for details.
	
	\item When $n=1$, the joint characteristic functions is defined at $\lambda$ if and only if $\ker(A - \lambda I) = 0$ and $\sum_{k=0}^{\infty} A^k \Phi^* \sigma \cE \subset \operatorname{im}(A - \lambda I)$; if $\lambda \notin \operatorname{sp}(A)$ the joint characteristic function coincides with the characteristic function of the colligation $(\cH,\cE,A,\Phi,\sigma)$ as defined in Section \ref{subsec:principal}.
\end{itemize}
\end{ex}

We finish this section by
formulating the uniqueness part of the inverse problem for the joint characteristic function
in the general case, similarly to \cite[Sec.\ 10.1]{LKMV}. 
Given a Lie algebra $\fg$ and the corresponding simply connected Lie group $\fG$, 
{\em find a suitable class of representations $\Pi$, 
such that for every two minimal vessels $\fV$ and $\fU$ with the same external data and $S_{\fV}(\pi) = S_{\fU}(\pi)$ 
for every $\pi \in \Pi$ (in particular one is defined at $\pi$ if and only if the other one is), 
it follows that $\fV$ is unitarily equivalent to $\fU$}.

\subsection{Plancherel's Theorem} \label{subsec:plancherel}

Throughout this section we will assume that $\fG$ is type-$I$. To simplify the formulas we will also assume that $\fG$ is unimodular and $\dim \cE < \infty$. All of the computations in this section are formal and are only intended to provide additional motivation.

In system theory one can pass to the frequency domain of a given linear system in several ways. One of those ways is by considering special wave trajectories. This approach was discussed above. The other way is to apply the Fourier transform. We will provide a formal version of that approach in the current section. This will give additional motivation to the use of unitary representations in the following sections.

Let $f \in C^{\infty}_c(\fG)$ and let $\pi$ be a unitary representation of $\fG$. Then we define:
\begin{equation*}
\widehat{f}(\pi) = \int_{\fG} f(g) \pi(g^{-1})\, d \mu(g).
\end{equation*}
Here $\mu$ is the Haar measure on $\fG$. This is the Fourier transform of $f$ and it is an operator on $\cH_{\pi}$.

Now if $u \in C^{\infty}_c(\fG,\cE)$, we can define:
\begin{equation*}
\widehat{u}(\pi) = \int_{\fG} u(g) \otimes \pi(g^{-1})\, d \mu(g).
\end{equation*}
That defines an operator $\widehat{u}(\pi) \colon \cH_{\pi} \to \cH_{\pi} \bar{\otimes} \cE$, we can take any tensor product since by our assumption $\cE$ is nuclear. Furthermore by the discussion in \cite[Sec.\ 4.4.1]{WarnerG1} we know that $\chpinf$ is mapped to $\chpinf \bar{\otimes} \cE$ by $\widehat{u}(\pi)$. Same argument using \cite[Prop.\ A2.4.1]{WarnerG1} shows that in fact $\cV$ is mapped to $\cV \bar{\otimes} \cE$ by $\widehat{u}(\pi)$. The representation obtained from $\pi$ on $\cV^*$ is the contragredient representation of $\pi$. Note that this representation in fact coincides with $\pi$ on $\cH_{\pi}$, since $\pi$ is unitary.

The inverse Fourier transform is given by (see \cite[Thm.\ 7.44]{F1})
\begin{equation*}
u(g) = \int_{\widehat{\fG}} \tr(\pi(g) \otimes \widehat{u}(\pi))\, d\widehat{\mu}(\pi).
\end{equation*}  
Here $\widehat{\mu}$ is the Plancherel measure and the trace is the per-coordinate trace, which is defined almost everywhere. Notice that if $T \in \cL(\cE)$ and $X \in \fg$, then, provided that the differential operator commutes with integration:
\begin{equation*}
T \left( X u(g) \right) = \int_{\widehat{\fG}} \tr( (\pi(g) \otimes I_{\cE} ) (\pi(X) \otimes T) \widehat{u}(\pi)) \, d\widehat{\mu}(\pi).
\end{equation*}

Therefore taking the input compatibility conditions and computing formally we get:

\begin{multline*}
\sigma(Y) X u - \sigma(X) Y u + \gamma(X \wedge Y) u = \sigma(Y) X \int_{\widehat{\fG}} \operatorname{tr}(\pi(g)\otimes\widehat{u}(\pi))\, d\widehat{\mu}(\pi) \\
- \sigma(X) Y \int_{\widehat{\fG}} \operatorname{tr}(\pi(g)\otimes\widehat{u}(\pi))\, d\widehat{\mu}(\pi) + \gamma(X \wedge Y) \int_{\widehat{\fG}} \operatorname{tr}(\pi(g)\otimes\widehat{u}(\pi))\, d\widehat{\mu}(\pi) \\
= \int_{\widehat{\fG}} \operatorname{tr}((\pi(g)\otimes I_{\cE}) ((\pi(X) \otimes \sigma(Y)) \widehat{u} - (\pi(Y) \otimes \sigma(X)) \widehat{u} \\
+ (\ihptens \gamma(X \wedge Y)) \widehat{u})))\, d\widehat{\mu}(\pi).
\end{multline*}
Hence $u$ satisfies the input compatibility conditions if and only if $U(\pi,X,Y) \widehat{u}(\pi) = 0$, for every $X,Y \in \fg$, $\widehat{\mu}$-almost everywhere. We conclude that the image of the operator $\widehat{u}(\pi) \colon \cH_{\pi} \to \cH_{\pi} \bar{\otimes} \cE$ lies inside $\bcE(\pi)$, $\widehat{\mu}$-almost everywhere. This differs slightly from the presentation in the previous sections and provides another take on the theory.

Applying the same considerations to the output compatibility equations we get that the image of $\widehat{y}(\pi)$ lies in $\bcE_*(\pi)$, $\widehat{\mu}$-almost everywhere. Now plugging the inverse Fourier transforms of $u$, $x$ and $y$ into the system equations, we get that:
$$
\widehat{y}(\pi) = W(\pi,X) \widehat{u}(\pi)
$$
$\widehat{\mu}$-almost everywhere, for every $X \in \fg$, such that $W(\pi,X)$ is defined.

Throughout this section we have noted that the calculations performed are purely formal. Indeed one can not make them precise even in the case when $\fg = \RR^2$, since $\bcE(\lambda_1,\lambda_2) = 0$ outside of a curve, $C$, defined by the input/output compatibility equations. Since the Plancherel measure in this case is just the Lebesgue measure on the plane, we get that our input signals are zero. To amend this, one uses a modified Fourier transform along the curve
$C$ as explained in \cite[Sec.\ 2.2]{BV}. An interesting question is to generalize this construction to the Lie algebra operator vessel setting.

\section{Example: The ax+b Group} \label{sec:ax+b}

\subsection{Frequency Domain for the $ax+b$ Group} \label{subsec:freq_dom_ax+b}

Let $\fg$ be the two dimensional Lie algebra, spanned as a vector space by its Jordan-H\"{o}lder basis $X_1$ and $X_2$, with the bracket satisfying $[X_1,X_2]=X_2$, as in Example \ref{ex:ax+b-vessel}.

Let $\fG$ be the group described in Example \ref{ex:ax+b-system}. It is easy to see that $\fG$ is an exponentially solvable Lie group. The basis of left invariant vector fields on $\fG$ is given by $X_1 = a \pda{}$ and $X_2 = a \pdb{}$. We will use the basis-dependent form of the vessel conditions and system equations in the future.

Following \cite[Ex.\ 5.4.2.1]{WarnerG1} we note that there are, up to unitary equivalence, two infinite dimensional unitary irreducible representations of $\fG$ that support the Plancherel measure on the unitary dual. These two representations, $\pi_{+}$ and $\pi_{-}$, on $\cH_{\pi_{+}} = L^2((0,\infty))$ and $\cH_{\pi_{-}} = L^2((-\infty,0))$, respectively, are given by:
\begin{equation*}
\begin{split}
(\pi(a,b) f)(t) = \sqrt{a} e^{2 \pi i b t} f(a t). 
\end{split}
\end{equation*}
Here $\pi$ is either $\pi_{+}$ or $\pi_{-}$ and $t \geq 0$ or $t \leq 0$, respectively. Fix $\pi = \pi_{+}$, similar considerations apply to $\pi_{-}$.
One can check that the corresponding representation of $\fg$ on $\chpminfplus$ is:
\begin{equation*}
\pi(X_1) = \frac{1}{2} I + t \pdt{}, \mbox{ } \pi(X_2) = M_{2 \pi i t}.
\end{equation*}
Here $M_f$ denoted the multiplication operator by the function $f$. By \cite[Sec.\ 7]{Good1969} the smooth vectors of $\pi$ are precisely the smooth functions in $f \in L^2((0,\infty))$, such that for every pair of positive integers $m,n$, we have $t^m f^{(n)} \in L^2((0,\infty))$. In particular $C^{\infty}_c((0,\infty)) \subset \chpinfplus$. 

\begin{prop}
The inclusion $C^{\infty}_c((0,\infty)) \hookrightarrow \chpinfplus$ is continuous and the image is dense, hence in particular one has that $\chpinfplus^{\prime} \subset D^{\prime}((0,\infty))$ (the space of distribution on $(0,\infty)$).
\end{prop}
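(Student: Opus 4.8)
The plan is to realize the Fréchet topology of $\chpinfplus$ through an explicit family of seminorms, verify continuity and density against that family, and then obtain the dual inclusion by transposition. The intrinsic smooth-vector topology is generated by the seminorms $\xi \mapsto \|\pi(D)\xi\|_{L^2}$ as $D$ runs over the universal enveloping algebra $U(\fg)$. Since $\pi(X_1) = \tfrac{1}{2} I + t\,\partial_t$ raises the differentiation order and the power of $t$ by the same amount, while $\pi(X_2) = M_{2\pi i t}$ only multiplies by a power of $t$, this topology is generated by
\begin{equation*}
q_{m,n}(f) = \| t^m f^{(n)} \|_{L^2((0,\infty))}, \qquad m \ge n \ge 0 .
\end{equation*}
The inequality $m \ge n$ is the decisive structural feature: it records that whenever $n$ derivatives are taken, at least $n$ powers of $t$ are present, and this is exactly what controls the endpoint $t = 0$. (Goodman's membership description, cited above, is equivalent to this.)

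For continuity of the inclusion $\iota$, by the universal property of the $LF$-topology of $C^{\infty}_c((0,\infty))$ it suffices to check that each $q_{m,n}$ restricts continuously to the subspace of functions supported in a fixed compact $K=[a,b]\subset(0,\infty)$. On this subspace $t^{m}$ is bounded by $b^{m}$, so
\begin{equation*}
q_{m,n}(f) \le b^{m}\,(b-a)^{1/2}\, \sup_{t}\, |f^{(n)}(t)| ,
\end{equation*}
which bounds $q_{m,n}$ by a $C^{n}$-seminorm of the $LF$-topology; continuity follows at once.

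Density I would prove with a dilation-adapted cutoff. Fix $\psi \in C^{\infty}(\RR)$ with $\psi \equiv 1$ on $[-1,1]$ and $\operatorname{supp}\psi \subset [-2,2]$, and set $\chi_R(t) = \psi(R^{-1}\log t)$, so that $\chi_R \in C^{\infty}_c((0,\infty))$, $\chi_R \equiv 1$ on $[e^{-R},e^{R}]$ and $\operatorname{supp}\chi_R \subset [e^{-2R},e^{2R}]$. For $f \in \chpinfplus$ the products $\chi_R f$ lie in $C^{\infty}_c((0,\infty))$, and I claim $\chi_R f \to f$ in every $q_{m,n}$. Expanding $((1-\chi_R)f)^{(n)}$ by Leibniz, the term carrying no derivative on the cutoff is $t^{m}(1-\chi_R)f^{(n)}$, whose $L^2$-norm is dominated by that of $t^{m}f^{(n)}$ over $\{t\le e^{-R}\}\cup\{t\ge e^{R}\}$ and hence tends to $0$. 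For the terms placing $j\ge 1$ derivatives on $\chi_R$, the identity $t\,\partial_t = d/d(\log t)$ yields
\begin{equation*}
t^{j}\chi_R^{(j)}(t) = \sum_{k=1}^{j} c_{j,k}\, R^{-k}\, \psi^{(k)}(R^{-1}\log t),
\end{equation*}
so that $|t^{j}\chi_R^{(j)}| \le C_j R^{-1}$; writing $t^{m}\chi_R^{(j)}f^{(n-j)} = (t^{j}\chi_R^{(j)})\, t^{m-j}f^{(n-j)}$ and using $m-j \ge n-j \ge 0$, the second factor has $L^2$-norm $q_{m-j,n-j}(f) < \infty$, whence this term is at most $C_j R^{-1} q_{m-j,n-j}(f) \to 0$. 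This is precisely where the constraint $m \ge n$ is indispensable, since it prevents any negative power of $t$ from ever appearing.

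The dual inclusion is then formal: a continuous linear map with dense image has injective transpose, so $\iota$ induces an injection $\iota^{t}\colon \chpinfplus^{\prime} \to D^{\prime}((0,\infty))$, that is, $\chpinfplus^{\prime} \subset D^{\prime}((0,\infty))$. I expect the only genuine difficulty to be the density statement at the boundary $t=0$: a naive affine cutoff produces a derivative of size $1/\varepsilon$ on a set of measure $\varepsilon$ and fails, and the remedy is the scale-invariant logarithmic cutoff above together with the observation that the generating seminorms obey $m \ge n$; continuity and the transposition step are routine.
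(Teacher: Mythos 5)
Your proof is correct and takes essentially the same route as the paper: the same LF-space reduction (seminorms on $\chpinfplus$ bounded by sup-norms of derivatives on each compact $K$) for continuity, and the same dilation-adapted cutoff for density, since the paper's $\varphi(t^{1/k})$ equals $\psi(k^{-1}\log t)$ with $\psi=\varphi\circ\exp$, i.e.\ it is exactly your logarithmic cutoff with $R=k$, followed by the same transposition argument for the dual inclusion. The only cosmetic difference is that you first reduce the generating seminorms to the explicit family $q_{m,n}$ with $m\ge n$, which the paper handles implicitly by writing $\pi(X)$, for $X$ a monomial in $U(\fg)$, as a polynomial-coefficient differential operator.
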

\begin{proof}
Since $C^{\infty}_c((0,\infty))$ is an $LF$-space, by \cite[Prop.\ 13.1]{Treves} it suffices to show that for each compact $K \subset (0,\infty)$, the inclusion map of $C_K((0,\infty))$, i.e., smooth functions with support in $K$, is continuous. The topology on $C_K((0,\infty))$ is generated by the following family of seminorms:
\begin{equation*}
p_n(f) = \sup_{x \in K} \left|\dfrac{d^n f}{d t^n}(x)\right|.
\end{equation*}
The topology on $\chpinfplus$ is generated by a family of seminorms:
\begin{equation*}
q_X(f) = \|\pi(X) f\|_{L^2},
\end{equation*}
for $X$ a monomial in $U(\fg)$.

By \cite[Prop.\ 7.7]{Treves} the inclusion is continuous if for every $q_X$ there exists a continuous seminorm, $p$, 
on $C_K((0,\infty))$, such that $q_X(f) \leq p(f)$.

For every $f \in C_K((0,\infty))$ we have that:
\begin{equation*}
q_X(f) = \| \pi(X) f\|_{L^2} \leq \mu(K) \sup_{x \in K} \left|(\pi(X) f)(x)\right|.
\end{equation*}
Here $\mu(K)$ is the Lebesgue measure of $K$ (which is finite, since $K$ is compact). Now note that:
\begin{equation*}
\pi(X) f = \sum_{j=0}^m a_j P_j(t) \dfrac{d^j f}{d t^j},
\end{equation*}
where $P_j$ are polynomials and $a_j$ are some complex coefficients. Since every polynomial is bounded on $K$, we get that:
\begin{equation*}
q_X(f) \leq \mu(K) \sup_{x \in K} \left|\sum_{j=0}^m a_j P_j(t) \dfrac{d^j f}{d t^j}\right| \leq \mu(K) \sum_{j=0}^m M_j \sup_{x \in K} \left|\dfrac{d^j f}{d t^j}\right|.
\end{equation*}
However the last expression defines a continuous seminorm on $C_K((0,\infty))$. Therefore the inclusion is continuous.

The density of the compactly supported smooth functions follows from \cite[Lem.\ A.1.3]{CorGrl90}.
More directly, 
let us fix $\varphi \in C_c^{\infty}((0,\infty))$ 
supported in $[1-\delta,1+\delta]$ and such that for every $t \in [1-\delta_0,1+\delta_0]$ we have $\varphi(t) = 1$ 
for some $0 < \delta_0 < \delta$. Let $f \in \cH_{\pi_+,\infty}$ and set $f_k(t) = \varphi(t^{1/k}) f(t)$, for $k > 1$. 
Repeated applications of the chain rule show first that 
$\left( t \dfrac{d}{dt} \right)^r (\varphi(t^{1/k}))$ is bounded for all $k>1$ and $r$, 
and second that
$\|t^q \left( t \dfrac{d}{dt} \right)^r(f_k - f)\|_{L^2} \underset{k\to\infty}\to 0$ that for all $r$ and $q$,
hence $\|\pi(X)(f_k-f)\|_{L^2} \underset{k\to\infty}{\to} 0$ for all $X \in U(\fg)$, and we are done.
\end{proof}

We set $\cV = C_c^{\infty}((0,\infty))$, hence $\cV^* = D^*((0,\infty))$ is the space of anti-linear distributions on $(0,\infty)$. In particular both $\cV$ and $\cV^*$ are nuclear.

Assume from now on that we have a $\fg$-vessel (that we shall also refer to as a $ax+b$-vessel)
$\fV=(\cH,\cE,A_1,A_2,\Phi,\sigma_1,\sigma_2,\gamma,\gamma_*)$ with
$\dim \cE < \infty$. 

Let $\bx_0 \in D^{*}((0,\infty),\cH) \cong D^{*}((0,\infty)) \bar{\otimes} \cH$ and $\bu_0,\by_0 \in   D^{*}((0,\infty),\cE) \cong D^{*}((0,\infty)) \bar{\otimes} \cE$.
The frequency domain system equations take the form:
\begin{align*}
\begin{split}
& \left(\frac{1}{2} I + t \pd{}{t} + A_1\right) \bx_0 = \Phi^*\sigma_1 \bu_0, \\
& \left(A_2 + 2 \pi i t\right) \bx_0  = \Phi^*\sigma_2 \bu_0,\\
& \by_0 = \bu_0 - i \Phi \bx_0.
\end{split}
\end{align*}
The following differential equation on $(0,\infty)$ is the frequency domain strict
input compatibility condition for our vessel:
\begin{equation} \label{axb:input_eq}
\left(\frac{1}{2} \sigma_2 + t \sigma_2 \dfrac{d}{dt} - 2 \pi i t \sigma_1 + \gamma \right) \bu_0 = 0.
\end{equation}
Similarly the frequency domain strict output compatibility condition is:
\begin{equation} \label{axb:output_eq}
\left(\frac{1}{2} \sigma_2 + t \sigma_2 \dfrac{d}{dt} - 2 \pi i t \sigma_1 - \gamma_{*} \right) \by_0 = 0.
\end{equation}

Furthermore we have that:
\begin{prop}
The operator $A_2$ is quasinilpotent.
\end{prop}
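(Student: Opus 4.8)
The plan is to exploit the single commutation relation $[A_1,A_2] = iA_2$, which comes from \eqref{eq:comm-basis} with $A_j = \rho(X_j)$ and $[X_1,X_2]=X_2$ (this is recorded explicitly in Example \ref{ex:ax+b-vessel}), together with the boundedness of $A_1$. Since $A_1$ and $A_2$ are bounded operators on $\cH$, the operator-valued function
\begin{equation*}
B(t) = e^{-t A_1}\, A_2 \, e^{t A_1}
\end{equation*}
is entire in $t \in \CC$, and I would differentiate it directly: using $\tfrac{d}{dt} e^{\pm t A_1} = \pm A_1 e^{\pm t A_1}$ one gets $B'(t) = e^{-tA_1}(A_2 A_1 - A_1 A_2) e^{tA_1} = -i\, e^{-tA_1} A_2 e^{tA_1} = -i B(t)$, where the middle equality is exactly the vessel commutation relation. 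Solving this linear operator ODE with $B(0) = A_2$ yields the key identity
\begin{equation*}
e^{-t A_1}\, A_2 \, e^{t A_1} = e^{-it}\, A_2, \qquad t \in \CC.
\end{equation*}

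Next I would read off the spectral consequence. For each $t$ the operators $e^{\pm tA_1}$ are mutually inverse bounded operators, so $A_2$ and $e^{-it}A_2$ are similar and hence have the same spectrum; that is, $\operatorname{sp}(A_2) = e^{-it}\,\operatorname{sp}(A_2)$ for all $t \in \CC$. The essential point --- and the reason for allowing \emph{complex} $t$ rather than only real $t$ --- is that the scalars $e^{-it}$ exhaust all of $\CC \setminus \{0\}$ as $t$ ranges over $\CC$, whereas restricting to real $t$ would only give invariance under the unit circle, which is too weak. Thus $\operatorname{sp}(A_2)$ is invariant under multiplication by every nonzero complex scalar.

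Finally I would conclude by compactness. The set $\operatorname{sp}(A_2)$ is nonempty and bounded, so if it contained a point $\lambda \neq 0$ then by scale-invariance it would contain the entire punctured plane $\{z\lambda : z \in \CC \setminus \{0\}\}$, contradicting boundedness of the spectrum. Hence $\operatorname{sp}(A_2) = \{0\}$, i.e. $A_2$ is quasinilpotent. I do not expect a serious obstacle: the computation of $B'(t)$ is routine once one commits to the exponential-conjugation trick, and the standing hypothesis $\dim \cE < \infty$ plays no role. The only point demanding a little care is that the identity for $B(t)$ is valid for \emph{all} complex $t$ (so that $e^{-it}$ sweeps out $\CC \setminus \{0\}$); this follows from the analyticity of $t \mapsto B(t)$ and uniqueness of solutions of the linear operator ODE $B' = -iB$.
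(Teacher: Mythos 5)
Your proof is correct, but it follows a genuinely different route from the paper's. The paper's argument is a two-line reduction to a cited classical result: it sets $D = \operatorname{ad}_{A_2}$ acting on $\cL(\cH)$, notes that $D^2 A_1 = D(-iA_2) = 0$, and invokes the Kleinecke--Shirokov theorem (if $D$ is a bounded derivation and $D^2 a = 0$, then $Da$ is quasinilpotent) to conclude that $DA_1 = -iA_2$, and hence $A_2$, is quasinilpotent. You instead give a self-contained argument: from $[A_1,A_2] = iA_2$ you derive the conjugation identity $e^{-tA_1} A_2 e^{tA_1} = e^{-it} A_2$ for all complex $t$ via the operator ODE $B' = -iB$, deduce that $\operatorname{sp}(A_2)$ is invariant under multiplication by every nonzero scalar, and conclude $\operatorname{sp}(A_2) = \{0\}$ from compactness and nonemptiness of the spectrum. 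Your insistence on complex $t$ is exactly the right point of care --- real $t$ would only give circle-invariance. What each approach buys: the paper's proof is shorter but opaque unless one knows Kleinecke--Shirokov; yours uses only elementary spectral theory and uniqueness for linear operator ODEs, and it makes visible the underlying mechanism (the relation $[A_1,A_2]=iA_2$ says $A_2$ is an eigenvector of $\operatorname{ad}_{A_1}$ with nonzero eigenvalue, which forces scale-invariance of its spectrum) --- indeed your argument is essentially the standard proof of the special case of Kleinecke--Shirokov that is being used here. Both proofs exploit the same vessel commutation relation and the boundedness of $A_1$, $A_2$, and neither needs $\dim\cE < \infty$, as you note.
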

\begin{proof}
Consider $D = ad_{A_2}$ acting on $\cL(\cH)$. This is a derivation. Furthermore $D^2 A_1 = 0$. By the Kleinecke-Shirokov theorem \cite[Thm.\ 17.1]{BS1}, we get that $D A_1$ is quasinilpotent. However, $D A_1 = -A_2$ and we are done.
\end{proof}

\begin{cor} \label{cor:resolv_exists}
The operator $T = M_{2 \pi i t} \otimes I_{\cH} + I_{\cV^*} \otimes A_2$ is invertible.
\end{cor}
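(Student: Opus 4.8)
The plan is to invert $T$ by exhibiting its inverse as multiplication by the operator-valued resolvent of $A_2$, exploiting the quasinilpotence established in the preceding proposition. First I would record that on the open half-line $(0,\infty)$ the scalar function $t \mapsto 2\pi i t$ is smooth and nowhere vanishing; since multiplication by any $C^\infty$ function on an open set is a continuous operator on the associated distribution space, the factor $M_{2\pi i t}\otimes I_{\cH}$ is already invertible on $\cV^* \bar{\otimes} \cH$, with inverse $M_{1/(2\pi i t)}\otimes I_{\cH}$. It then suffices to write $T = (M_{2\pi i t}\otimes I_{\cH})\bigl(I - M_{1/(2\pi i t)}\otimes A_2\bigr)$ and invert the second factor.

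Second, by the preceding proposition $\operatorname{sp}(A_2) = \{0\}$, so for each fixed $t>0$ the scalar $2\pi i t$ lies in the resolvent set of $A_2$ and $R(t) := (2\pi i t\, I_{\cH} - A_2)^{-1}$ exists in $\cL(\cH)$. Because the resolvent $z \mapsto (z-A_2)^{-1}$ is norm-holomorphic off $\operatorname{sp}(A_2)$, the map $t \mapsto R(t)$ is a real-analytic $\cL(\cH)$-valued function on $(0,\infty)$; concretely, quasinilpotence makes the Neumann expansion $R(t) = \sum_{k=0}^{\infty} A_2^{k}/(2\pi i t)^{k+1}$ converge in operator norm for every $t>0$, since $\|A_2^k\|^{1/k}\to 0$ dominates the geometric growth of $(2\pi i t)^{-k}$ on compact subsets of $(0,\infty)$.

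Third, I would promote $R$ to an operator $M_R$ on $\cV^* \bar{\otimes} \cH$, namely multiplication by the smooth $\cL(\cH)$-valued multiplier $R(t)$; exactly as in the scalar case this is continuous because $R$ and all its $t$-derivatives are norm-continuous on the open interval. The verification $T M_R = M_R T = I$ is then purely pointwise in $t$: one uses that $R(t)$ commutes with $A_2$, being its resolvent, and that $(2\pi i t\, I_{\cH} - A_2)R(t) = R(t)(2\pi i t\, I_{\cH} - A_2) = I_{\cH}$, which yields $T M_R = M_R T = I_{\cV^*\bar{\otimes}\cH}$.

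I expect the only genuine obstacle to be the functional-analytic step of justifying that $M_R$ is a bona fide continuous operator on the distribution space $\cV^* \bar{\otimes} \cH$, rather than a merely formal pointwise recipe --- equivalently, that the Neumann series $\sum_k M_{1/(2\pi i t)^{k}}\otimes A_2^{k}$ converges in the topology of bounded operators there. This is precisely where quasinilpotence is essential: it forces $\sum_k \|A_2^k\| \sup_{t\in K} |2\pi i t|^{-k}$, together with the analogous sums weighted by the $t$-derivatives of $(2\pi i t)^{-k}$, to converge on every compact $K \Subset (0,\infty)$, which controls all the seminorms defining the topology of $\cV^* \bar{\otimes} \cH$ and produces a genuine continuous inverse.
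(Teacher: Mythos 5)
Your proposal is correct and is in substance the paper's own argument: both proofs rest on the fact that quasinilpotence of $A_2$ makes $2 \pi i t\, I_{\cH} - A_2$ invertible for every $t > 0$, and then on the continuity of multiplication by the resulting norm-smooth $\cL(\cH)$-valued resolvent on the relevant function/distribution space. The only difference is bookkeeping: the paper makes your multiplier $M_R$ rigorous by dualizing --- it inverts $S = M_{-2 \pi i t} - A_2^*$ on the test-function space $C_c^{\infty}((0,\infty),\cH)$ and obtains the inverse of $T$ as the transpose of $S^{-1}$ (continuous because continuous maps carry bounded sets to bounded sets) --- which is precisely the duality one would invoke to define your multiplication operator on $\cV^* \bar{\otimes} \cH$ in the first place.
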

\begin{proof}

Following the discussion in \cite[Ex.\ A2.3.1]{WarnerG1}, 
each $\cH$-valued distribution on $(0,\infty)$, i.e., an element of $D^*((0,\infty),\cH)$, 
defines a unique anti-linear $\cH$-distribution on $(0,\infty)$, i.e., a continuous linear functional on $C_c^{\infty}((0,\infty),\cH)$. Thus we can think of the operator $T$ as the transpose of the operator $S = M_{-2 \pi i t} + A_2^*$ acting on $C_c^{\infty}((0,\infty),\cH)$. Since $A_2$ is quasinilpotent, so is $A_2^*$. Hence for every $t \in (0,\infty)$, the operator $A_2^* - 2 \pi i t I$ is invertible. Hence the operator $S$ is invertible. The inverse of $S$ is continuous and therefore takes bounded sets to bounded sets. Thus its transpose, which is the inverse of $T$, is continuous as well, by the definition of the topology on $\cH$-distributions and \cite[Prop.\ 19.5]{Treves}.
\end{proof}

Now note that $X_2$ is $ad$-nilpotent. Hence by Theorem \ref{prop-ad_nilpotent_independence} we know that the joint characteristic function is defined at $\pi$ and the proof of Corollary \ref{cor:resolv_exists} shows that it is given by the following expression:
\begin{equation*}
S(\pi) = I_{\cE} - \Phi \left(A_2 + 2 \pi i t\right)^{-1} \Phi^* \sigma_2.
\end{equation*}
The operator $S(\pi)$ acts on the space $\bcE(\pi) \subset D^*((0,\infty),\cE)$ of solutions 
to the frequency domain strict input compatibility equation \eqref{axb:input_eq} by multiplication; 
the result belongs to $\bcE_*(\pi)$, 
the space of solutions to the frequency domain strict output compatibility equation \eqref{axb:output_eq}. 
Here $\pi$ is either $\pi_{+}$ or $\pi_{-}$.

\subsection{Complexification} \label{subsec:complex}

Note that in fact any solution of the frequency domain strict compatibility equations 
\eqref{axb:input_eq} and \eqref{axb:output_eq} admits a multivalued analytic continuation 
to some punctured disc around the origin, see \cite[Thm.\ 4.1.1]{CL1} and \cite[Thm.\ 4.2.1]{CL1}. 
We can thus consider the complex differential equations:
\begin{align} \label{complex-compat}
\begin{split}
& \left( z \sigma_2 \dfrac{d}{dz} - 2 \pi i z \sigma_1 + i \tau \right) \bu_0 = 0,\\
& \left( z \sigma_2 \dfrac{d}{dz} - 2 \pi i z \sigma_1 - i \tau_{*} \right) \by_0 = 0.
\end{split}
\end{align}
Here $\tau$ and $\tau_{*}$ are the imaginary parts of $\gamma$ and $\gamma_{*}$ respectively,
and we have used the vessel condition \eqref{eq:2nd-basis}. 

Assume that $\sigma_2$ is invertible. 
By \cite[Thm.\ 4.2.1]{CL1}, we get that the fundamental solution matrix of the equation has the form: 
$\Psi(z) = z^{P} B(z)$, where $B(z)$ is entire and $P$ is some constant matrix. 
Clearly such a function is continuous on $(0,\infty)$ and hence locally integrable. 
It therefore defines a distribution on $(0,\infty)$.

\begin{thm} \label{complexification}
Multiplication by the following meromorphic operator-valued function maps solutions of the first equation of \eqref{complex-compat} to the solutions of the second:
\begin{equation*}
S(z) = I_{\cE} - \Phi \left(A_2 + 2 \pi i z \right)^{-1} \Phi^* \sigma_2.
\end{equation*}
\end{thm}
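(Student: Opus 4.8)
The plan is to reduce the statement to an application of the identity theorem, leaning on the fact established in the discussion following Corollary \ref{cor:resolv_exists}: on the ray $(0,\infty)$, Theorem \ref{prop-ad_nilpotent_independence} already guarantees that multiplication by $S(t) = I_{\cE} - i\Phi(A_2 - 2\pi t)^{-1}\Phi^*\sigma_2$ carries solutions of \eqref{axb:input_eq} into solutions of \eqref{axb:output_eq}. The task is then to promote this real-variable statement to an analytic one.

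First I would line up the complex and real pictures. Since $A_2$ is quasinilpotent, $\operatorname{sp}(A_2) = \{0\}$, so $(A_2 - 2\pi z)^{-1}$ is single-valued and holomorphic on $\CC\setminus\{0\}$ and hence $S(z)$ is a single-valued $\cL(\cE)$-valued holomorphic function there. On the compatibility side, writing $\gamma = \tau + \frac{i}{2}\sigma_2$ and $\gamma_* = \tau_* + \frac{i}{2}\sigma_2$, which is exactly \eqref{eq:2nd-basis} with $c_{12}^2 = 1$ together with the definition of $\tau,\tau_*$ as the self-adjoint parts, the term $\frac{1}{2}\sigma_2$ in \eqref{axb:input_eq} and \eqref{axb:output_eq} cancels the imaginary part of $i\gamma$, respectively $i\gamma_*$, leaving the operators
$$t\sigma_2\frac{d}{dt} - 2\pi i t\sigma_1 + i\tau, \qquad t\sigma_2\frac{d}{dt} - 2\pi i t\sigma_1 + i\tau_*.$$
These are precisely the restrictions to $(0,\infty)$ of the operators in \eqref{complex-compat}; thus on $(0,\infty)$ the complex equations coincide with the real ones.

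Next I would run the continuation. As $\sigma_2$ is invertible, both equations in \eqref{complex-compat} have a regular singular point at the origin, so by \cite[Thm.\ 4.2.1]{CL1} every solution is holomorphic (possibly multivalued) on $\CC^*$. Fix a simply connected domain $\Omega \subset \CC^*$ meeting $(0,\infty)$ in a nondegenerate interval $I$, let $\bu_0$ be a holomorphic solution of the first equation on $\Omega$, set $\by_0 = S(z)\bu_0$, and consider
$$g(z) = \left( z\sigma_2\frac{d}{dz} - 2\pi i z\sigma_1 + i\tau_* \right)\by_0(z),$$
an $\cE$-valued holomorphic function on $\Omega$. For $t \in I$ the smooth function $\bu_0|_I$ solves \eqref{axb:input_eq}, hence lies in $\bcE(\pi)$; by the already-proved ray statement $S(t)\bu_0(t) \in \bcE_*(\pi)$, i.e.\ it solves \eqref{axb:output_eq}, which on $(0,\infty)$ is the restriction of the second equation of \eqref{complex-compat}. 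Therefore $g(t) = 0$ for all $t \in I$.

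Finally, $g$ is holomorphic on the connected set $\Omega$ and vanishes on $I$, which has a limit point in $\Omega$, so the identity theorem forces $g \equiv 0$ on $\Omega$; thus $\by_0 = S(z)\bu_0$ solves the second equation of \eqref{complex-compat} there. Since $\bu_0$ and $S$ continue analytically along any path in $\CC^*$ and $g$ continues to the zero function, the conclusion propagates to all of $\CC^*$, proving that multiplication by $S(z)$ maps solutions of the first equation to solutions of the second. The only delicate points are verifying that $S$ is genuinely single-valued and holomorphic off the origin—secured by the quasinilpotence of $A_2$—and that the real and complex equations truly agree on $(0,\infty)$, so that the ray result may legitimately be invoked; once these are in place, the identity theorem does the rest. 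A purely computational alternative would establish the intertwining directly as an identity of meromorphic functions by specializing \eqref{eq:intertwining} and using $S'(z) = -2\pi i\,\Phi(A_2 - 2\pi z)^{-2}\Phi^*\sigma_2$, but the continuation argument is shorter and sidesteps the full vessel-identity computation.
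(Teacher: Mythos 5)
Your proof is correct and takes essentially the same route as the paper: the paper's own proof is precisely the two-step argument of restricting to the positive real axis, where the ray result following Corollary \ref{cor:resolv_exists} applies, and then invoking analytic continuation, which you have simply carried out in detail (the single-valuedness of $S(z)$ via quasinilpotence of $A_2$, the identification of $\gamma = \tau + \tfrac{i}{2}\sigma_2$, and the identity-theorem step). Your closing remark about a direct computational alternative using the vessel conditions likewise matches the paper's remark immediately after the theorem.
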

\begin{proof}
Restricting the equations to the positive real axis, we know this from the general theory developed above. Now the result follows by analytic continuation.
\end{proof}

\begin{rem}
The above theorem can be proved via straightforward computations using Lemma \ref{phi*gamma} and the vessel conditions.
\end{rem}

One can also consider the complexification of the equations from the point of view of representation theory. 
Recall that $\pi_{+}$ and $\pi_{-}$ are obtained by inducing unitary characters 
of the one parameter subgroup $\exp(t X_2)$, which happens to be normal. 
This group consists precisely of the elements of the form $(1,t)$. 
The unitary characters then take the form $e^{2 \pi i s t}$, for $s \in \mathbb{R}$ 
(for details see \cite[Ch.\ 6.7.1]{F1}). 
Consider now taking a character of the form $\chi_w(t) = e^{2 \pi i w t}$, for $w \in \mathbb{C}$.  
Those are precisely the generalized characters considered by Mackey in \cite{Mack}. 
We can thus identify the set of all those generalized characters with $\mathbb{C}$. 
The group acts on the set of all characters via $(a,b)\cdot w = w/a$. 
Hence the orbits of the action are rays $\arg w = \theta$. 
Let $\pi_w$ be the representation induced from $\chi_w$ in the sense of Aarnes \cite{Aarnes}. 
Namely, we consider the action of $\fG$ on the right on the space of distributions on the upper half-plane 
and tensor it with $\CC$ acted on the left by the character $\chi_w$. 
The tensor is taken as a tensor of a right $\exp(t X_2)$ module with a left module of the same group, 
and the action is the regular action on the left on distributions.
Then $\pi_w$ can be viewed as a representation on $C_c^{\infty}((0,\infty))$ and:
\begin{equation*}
(\pi_w(a,b) f)(r) = \sqrt{a} e^{2 \pi i w b r} f(a r).
\end{equation*}
We identify $C_c^{\infty}((0,\infty))$ 
with the space of compactly supported smooth functions on the ray $\{z = r e^{i \theta} \mid r > 0\}$ with the measure $dr$.

If $w_1$ and $w_2$ are in the same orbit, the induced representations are equivalent, 
indeed note that if $w_1 = a w_2$ for some $a > 0$, then $\pi_{w_1}(a,0) = \pi_{w_2}(a,0)$ 
and this operator intertwines $\pi_{w_1}$ and $\pi_{w_2}$. Thus we may assume that $w = e^{i \theta}$. 
Computing the representation $\pi_w$ of the Lie algebra, we get the strict input compatibility equation:
\begin{equation} \label{eq:ray_compat}
\left( r \sigma_2 \dfrac{d}{dr} - 2 \pi i e^{i \theta} r \sigma_1 + i \tau \right) \bu_0 = 0.
\end{equation} 

Now note that $\dfrac{d}{dr} = e^{i \theta} \dfrac{d}{dz}$, 
hence this equation coincides with the first equation of \eqref{complex-compat}. 
Thus if $\bu_0$ solves the input differential equation of \eqref{complex-compat}, 
then the restriction of $\bu_0$ to the ray $\arg z = \theta$ solves the differential equation \eqref{eq:ray_compat}. 
On the other hand, as we have stated above, 
the solutions of \eqref{eq:ray_compat} can be analytically continued to solutions of \eqref{complex-compat} 
on the punctured plane. 
The exact same considerations apply to the output differential equation.

When $\sigma_2$ is invertible, the equations \eqref{complex-compat} are singular ODEs, 
with a singularity of first kind at $0$. We denote by $\bcE$ the space of solutions of the input equation 
and by $\bcE_{*}$ the space of solutions of the output equation. 
Let $\Psi(z)$ and $\Psi_{*}(z)$ be the fundamental matrices of solutions 
for the input and the output differential equations, respectively. 
By \cite[Ch.\ 4.4]{CL1} both of these matrices are multivalued, hence we have that:
\begin{equation*}
\Psi(z e^{2 \pi i}) =\Psi(z) M \mbox{ and } \Psi_{*}(z e^{2 \pi i} = \Psi_{*}(z) M_{*}.
\end{equation*}
Here $M$ and $M_{*}$ are the representations of the monodromy operators 
in the bases described by $\Psi(z)$ and $\Psi_{*}(z)$ for $\bcE$ and $\bcE_{*}$, respectively. 
Let $\bS \colon \bcE \to \bcE_{*}$ be the map between the spaces of solutions given by the multiplication by $S(z)$. 
Applying $\bS$ to the columns of $\Psi(z)$ we get the following expression:
\begin{equation*}
S(z) \Psi(z) = \Psi_*(z) C.
\end{equation*}
Since $S(z)$ is single-valued, if we apply the monodromy operator at the output, we get that:
\begin{equation*}
S(z e^{2 \pi i}) \Psi(z e^{2 \pi i}) = S(z) \Psi(z) M = \Psi_*(z) C M.
\end{equation*}
On the other hand:
\begin{equation*}
\Psi_*(z e^{2 \pi i}) C = \Psi_*(z) M_* C.
\end{equation*}
Hence we have that $C M = M_* C$. In other words we have proved that:
\begin{prop}
The mapping $\bS$ intertwines the monodromy operators.
\end{prop}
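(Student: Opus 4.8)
The plan is to exploit that $\bS$ is multiplication by the \emph{single-valued} operator-valued function $S(z)$, so that the sole source of multivaluedness in the relation between the two fundamental matrices is carried by the solution spaces themselves. First I would record that, by Theorem \ref{complexification}, $\bS$ is a well-defined linear map $\bcE \to \bcE_*$, given on the punctured disc by multiplication by the meromorphic function $S(z) = I_{\cE} - i\Phi(A_2 - 2\pi z)^{-1}\Phi^*\sigma_2$; this is what guarantees that applying $S(z)$ to a column of the input fundamental matrix $\Psi(z)$ again yields a solution of the output equation.

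Next I would write $\bS$ in the chosen bases. Since each column of $S(z)\Psi(z)$ lies in $\bcE_*$, and the columns of $\Psi_*(z)$ form a basis of $\bcE_*$, there is a constant matrix $C$ with
\[
S(z)\Psi(z) = \Psi_*(z)\, C,
\]
and $C$ is by construction exactly the matrix representing $\bS$ with respect to the bases given by the columns of $\Psi$ and $\Psi_*$.

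The heart of the argument is analytic continuation once around the origin. Continuing both sides of the displayed identity and invoking the single-valuedness of $S$ together with the monodromy relations $\Psi(ze^{2\pi i}) = \Psi(z)M$ and $\Psi_*(ze^{2\pi i}) = \Psi_*(z)M_*$, the left-hand side becomes $S(z)\Psi(z)M = \Psi_*(z)\,CM$, while the right-hand side becomes $\Psi_*(z)\,M_*C$. As $\Psi_*(z)$ is invertible (its columns are a basis of the solution space), I may cancel it to obtain $CM = M_*C$, which is precisely the assertion that $\bS$ intertwines the monodromy operators $M$ and $M_*$.

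The one point demanding care — and the crux of the whole proof — is the single-valuedness of $S(z)$: because $S(z)$ depends on $z$ only through the resolvent $(A_2 - 2\pi z)^{-1}$, it is genuinely meromorphic and therefore has trivial monodromy, and it is exactly this triviality that forces the input monodromy $M$ to be transported to the output monodromy $M_*$ rather than mixing with a contribution from $S$ itself. I would also flag explicitly the invertibility of $\Psi_*(z)$ as the fact licensing the final cancellation, since without it one could not pass from $\Psi_*(z)CM = \Psi_*(z)M_*C$ to $CM = M_*C$.
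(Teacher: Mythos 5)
Your proof is correct and follows essentially the same route as the paper: expressing $\bS$ via $S(z)\Psi(z) = \Psi_*(z)C$, continuing around the origin, and using the single-valuedness of $S(z)$ together with the monodromy relations to conclude $CM = M_*C$. Your explicit remarks on the invertibility of $\Psi_*(z)$ and on why single-valuedness of $S(z)$ is the crux are sound refinements of the same argument, not a different approach.
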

We will call the mapping $\bS$ {\em the joint characteristic function} of the $ax+b$-vessel.

\begin{rem}
Note that a linear map between solution spaces of the differential equations \eqref{complex-compat}
that intertwines the monodromy operators 
is always given by the multiplication by a single-valued matrix function 
that is analytic on ${\mathbb C} \setminus \{0\}$. 
Indeed, given such a map $\bS$, let us choose fundamental matrices $\Psi(z)$ and $\Psi_{*}(z)$,
and write $[\bS]$ for the matrix representing $\bS$ with respect to the corresponding choice of bases. 
Then the multiplication by the following matrix coincides with $\bS$:
\begin{equation*}
S(z) = \Psi_{*}(z) [\bS] \Psi(z)^{-1}.
\end{equation*}
Since $\bS$ intertwines the monodromy operators it is immediate that $S(z)$ is single-valued.

This discussion implies also that once the differential equations are fixed, 
the matrix-valued function $S(z)$ is in fact determined by a constant matrix $[\bS]$. 
\end{rem}

Returning to the setting of Theorem \ref{complexification}, we see 
that the matrix-valued function $S(z)$ is precisely the classical characteristic function of 
the colligation $(i A_2,\cH,\cE,\Phi,\sigma_2)$ as defined in Section \ref{sec:principal} 
(up to scaling the variable $z$ by $2\pi$). 
From this fact and Theorem \ref{jft-determines-vessel} we get the following theorem:
\begin{thm} \label{thm:classification_ax+b}
Assume that $\sigma_2$ is invertible. 
Then every two minimal $ax+b$-vessels 
with the same external data and the same joint characteristic function are unitarily equivalent.
\end{thm}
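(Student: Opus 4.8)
The plan is to reduce the statement to the single-operator uniqueness result Theorem \ref{jft-determines-vessel}, applied with the distinguished element $X = X_2$, for which $\sigma(X_2) = \sigma_2$ is invertible by hypothesis. Write the two vessels as $\fV = (\cH_{\fV}, \cE, \rho_{\fV}, \Phi, \sigma_1, \sigma_2, \gamma, \gamma_*)$ and $\fU = (\cH_{\fU}, \cE, \rho_{\fU}, \Phi', \sigma_1, \sigma_2, \gamma, \gamma_*)$, and set $A_2^{\fV} = \rho_{\fV}(X_2)$, $A_2^{\fU} = \rho_{\fU}(X_2)$. The first point to establish is that, since $\fV$ and $\fU$ carry the same external data $(\cE, \sigma_1, \sigma_2, \gamma, \gamma_*)$, the pair of singular ODEs \eqref{complex-compat} is literally identical for both vessels. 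Hence the solution spaces $\bcE$, $\bcE_*$, a fixed choice of fundamental matrices $\Psi(z)$, $\Psi_*(z)$, and the monodromy operators $M$, $M_*$ are all common to $\fV$ and $\fU$, so that the hypothesis that the two vessels share the same joint characteristic function is a genuine equality of the monodromy-intertwining maps $\bS_{\fV} = \bS_{\fU} \colon \bcE \to \bcE_*$ between identical spaces.

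Next I would invoke the remark preceding the theorem: once the differential equations \eqref{complex-compat} are fixed, a monodromy-intertwining map is represented by a single constant matrix $[\bS]$ in the bases furnished by $\Psi(z)$ and $\Psi_*(z)$, and the corresponding single-valued multiplier is recovered as $\Psi_*(z)[\bS]\Psi(z)^{-1}$. Applied to each vessel separately, this identifies its own multiplier with $\Psi_*(z)[\bS_{\fV}]\Psi(z)^{-1}$, respectively $\Psi_*(z)[\bS_{\fU}]\Psi(z)^{-1}$. Because $\bS_{\fV} = \bS_{\fU}$ and the bases agree, we get $[\bS_{\fV}] = [\bS_{\fU}]$, and therefore the two multipliers coincide as meromorphic functions on $\CC \setminus \{0\}$:
\begin{equation*}
I_{\cE} - i\Phi\,(A_2^{\fV} - 2\pi z)^{-1}\Phi^*\sigma_2 = I_{\cE} - i\Phi'\,(A_2^{\fU} - 2\pi z)^{-1}(\Phi')^*\sigma_2.
\end{equation*}

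After the change of variable $w = 2\pi z$, the two sides are exactly the classical characteristic functions of the colligations $\fC_{\fV} = (A_2^{\fV}, \cH_{\fV}, \cE, \Phi, \sigma_2)$ and $\fC_{\fU} = (A_2^{\fU}, \cH_{\fU}, \cE, \Phi', \sigma_2)$ in the sense of Section \ref{sec:principal}. Since $A_2$ is quasinilpotent its spectrum is $\{0\}$, so both resolvents exist and the two characteristic functions are defined and equal in a neighborhood of infinity. All hypotheses of Theorem \ref{jft-determines-vessel} are now met --- minimality of $\fV$ and $\fU$, common external data, invertibility of $\sigma(X_2)$, and coincidence of the colligation characteristic functions near infinity --- and that theorem yields an isometric isomorphism $U \colon \cH_{\fV} \to \cH_{\fU}$ implementing the unitary equivalence of $\fV$ and $\fU$.

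I expect the main obstacle to be the bookkeeping of the first two paragraphs: one must check carefully that equality of the external data forces the compatibility ODEs, the solution spaces, and the chosen fundamental matrices to coincide, so that ``the same joint characteristic function'' is an honest equality of maps rather than merely an abstract isomorphism, and then that this equality transfers to the concrete multipliers via the formula $\Psi_*(z)[\bS]\Psi(z)^{-1}$. Once this identification of $\bS$ with the variable-scaled colligation characteristic function is pinned down, the remaining reduction to the Liv\v{s}ic-type single-operator theorem is routine.
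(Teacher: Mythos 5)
Your proposal is correct and follows essentially the same route as the paper: identify the joint characteristic function $\bS$, via the constant matrix $[\bS]$ and the formula $\Psi_*(z)[\bS]\Psi(z)^{-1}$, with the classical Liv\v{s}ic characteristic function of the colligation $(A_2,\cH,\cE,\Phi,\sigma_2)$ (up to scaling $z$ by $2\pi$), and then apply Theorem \ref{jft-determines-vessel} with $X = X_2$. The paper compresses this into one sentence before the theorem statement; your write-up merely makes explicit the bookkeeping (identical ODEs, solution spaces, and fundamental matrices for both vessels) that the paper leaves implicit.
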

This theorem leads to the natural question of characterizing those mappings between the spaces $\bcE$ and $\bcE_*$
that can be realised as the joint characteristic function of a $ax+b$-vessel with the corresponding external data.
The corresponding question for commutative two-operator vessels has been largely settled 
in \cite[Sections 10.5 and 11.2]{LKMV} and \cite{V3}.

The following lemma is needed to prove the main result of this section.
\begin{lem} 
Let $s_0(z),\ldots,s_m(z)$ be functions analytic and single valued in some punctured disc $D(0,\rho) \setminus \{0\}$. Assume that:
\begin{equation} \label{stupid}
\sum_{j=0}^m (\log z)^{j} s_j(z) = \sum_{j=0}^m z^{n_j} (\log z)^{j} g_j(z),
\end{equation}
where $n_l$ are integers and $g_0(z),\ldots,g_m(z)$ are functions analytic and single valued on $D(0,\rho)$. Then we have that $s_j(z) = z^{n_j} g_j(z)$, so in particular $s_j$ has at most a pole at $0$, for every $j=0,\ldots,m$.
\end{lem}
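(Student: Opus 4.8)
The plan is to exploit the fact that the only multivalued ingredient in \eqref{stupid} is $\log z$, whereas the coefficient functions $s_j$ and $g_j$ are single valued. I would first fix a branch of $\log z$ on a slit neighbourhood of the origin, so that \eqref{stupid} becomes an identity of genuine single valued functions there. Then I would analytically continue once around the origin: each $s_j$, $g_j$ and each $z^{n_j}$ is left unchanged (being single valued), while $\log z \mapsto \log z + 2\pi i$. Iterating this continuation, for every integer $k$ one obtains
\begin{equation*}
\sum_{j=0}^m (\log z + 2\pi i k)^j s_j(z) = \sum_{j=0}^m z^{n_j}(\log z + 2\pi i k)^j g_j(z).
\end{equation*}

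Next I would fix a point $z$ in the punctured disc and set $c_j = s_j(z) - z^{n_j} g_j(z)$ together with $w = \log z$. The previous display then says that the polynomial
\begin{equation*}
p(u) = \sum_{j=0}^m c_j\, u^j
\end{equation*}
of degree at most $m$ vanishes at every point $u = w + 2\pi i k$, $k \in \ZZ$. These are infinitely many distinct complex numbers, so $p$ has infinitely many roots and must be the zero polynomial; hence $c_j = 0$ for every $j$. Since $z$ was arbitrary, this gives $s_j(z) = z^{n_j} g_j(z)$ throughout the punctured disc, which is the asserted identity.

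The pole statement would then be immediate: $g_j$ is analytic on all of $D(0,\rho)$, and $z^{n_j}$ is analytic when $n_j \ge 0$ and has at worst a pole of order $-n_j$ at the origin when $n_j < 0$, so $s_j = z^{n_j} g_j$ has at most a pole at $0$.

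The one step requiring care — and really the entire content of the argument — is the monodromy reduction: one must read \eqref{stupid} as an equality of multivalued (analytically continued) functions, so that continuation around $0$ is legitimate and alters only $\log z$ while leaving the single valued coefficients fixed. Once this is granted, the remainder is the elementary fact that a polynomial of degree at most $m$ with infinitely many roots vanishes identically, which I expect to cause no difficulty.
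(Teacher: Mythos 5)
Your proof is correct. It rests on the same essential mechanism as the paper's --- analytic continuation around the origin, which shifts $\log z$ by $2\pi i$ while leaving the single-valued functions $s_j$, $g_j$, $z^{n_j}$ untouched --- but the way you conclude is genuinely different and cleaner. The paper performs the monodromy substitution exactly once, subtracts the two identities, and then runs an induction on $m$: a binomial expansion shows the subtracted identity is a polynomial identity of degree $m-1$ in $\log z$ whose top coefficient is $2\pi i\, m\,\bigl(s_m(z)-z^{n_m}g_m(z)\bigr)$, and the induction hypothesis (applied to identities with vanishing right-hand side) kills that coefficient, after which the induction hypothesis is invoked a second time on the original identity with the $m$-th terms removed. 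You instead iterate the continuation to obtain the identity for every integer $k$, freeze $z$, and observe that the polynomial $p(u)=\sum_{j}\bigl(s_j(z)-z^{n_j}g_j(z)\bigr)u^j$ of degree at most $m$ vanishes at the infinitely many distinct points $\log z+2\pi i k$, hence vanishes identically. This replaces the induction and the binomial bookkeeping by a one-line ``too many roots'' argument (indeed $m+1$ values of $k$ already suffice), and it also sidesteps a small blemish in the paper's displayed computation, where a $j=0$ term that should cancel upon subtraction is left standing. What the paper's route gains is that only a single loop of continuation is ever used; what yours gains is brevity and transparency. You also correctly isolate the one delicate point, common to both arguments: the hypothesis must be read as an identity of analytic continuations (equivalently, of single-valued functions on a slit disc for a fixed branch of $\log$), so that continuation around $0$ preserves it.
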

\begin{proof}

We prove it by induction on $m$. For $m=0$ it is clear.  Now from \eqref{stupid} we get that:
\begin{equation*}
(\log z)^m (s_m(z) - z^{n_m} g_m(z)) = \sum_{j=0}^{m-1} (\log z)^{j} (z^{n_j} g_j(z) - s_j(z)).
\end{equation*}

Substituting $z e^{2 \pi i}$ and subtracting the above equality we get that:
\begin{multline*}
(s_m(z) - z^{n_m} g_m(z)) \sum_{k=1}^m \binom{m}{k} (2 \pi i)^k (\log z)^{m-k} \\ = \sum_{j=1}^{m-1} \left(\sum_{k=1}^j \binom{j}{k} (2\pi i)^k (\log z)^{j-k} \right) \\ \times (z^{n_j} g_j(z) - s_j(z)) + (z^{n_0} g_0(z) - s_0(z)).
\end{multline*}
Note that the coefficient of $(\log z)^{m-1}$ is $2 \pi i m (s_m(z) - z^{n_m} g_m(z))$, so applying the induction hypothesis we get that $s_m(z) = z^{n_m} g_m(z)$. Hence we can use the induction hypothesis again to deduce the lemma.

\end{proof}

\begin{thm}
Assume that $\fV$ is a minimal $ax+b$-vessel and that $\sigma_2$ is invertible. Then $\dim \cH < \infty$. 
\end{thm}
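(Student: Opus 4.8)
The plan is to reduce the statement to a rationality property of the characteristic function of the single operator $A_2$, and then to read off finite-dimensionality from a finite-rank Hankel argument. First I would record what minimality gives for $A_2$. Since $\sigma_2 = \sigma(X_2)$ is invertible, Proposition \ref{principal-struct} together with its Corollary and Proposition \ref{adjoint-principal} yield
\begin{equation*}
\cH = \cP = \bigvee_{k\ge 0} A_2^k \Phi^* \cE = \bigvee_{k\ge 0} A_2^{*k}\Phi^*\cE ;
\end{equation*}
that is, the colligation $(A_2,\cH,\cE,\Phi,\sigma_2)$ is minimal (both reachable and observable), and we already know $A_2$ is quasinilpotent. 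Consequently $S(z) = I_\cE - i\Phi(A_2 - 2\pi z)^{-1}\Phi^*\sigma_2$ is analytic on $\CC\setminus\{0\}$, with Laurent expansion at infinity
\begin{equation*}
S(z) - I_\cE = i\sum_{k\ge 0} \frac{\Phi A_2^k \Phi^* \sigma_2}{(2\pi z)^{k+1}}.
\end{equation*}

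The central step is to prove that $S$ is in fact meromorphic at the origin. Here I would use the complexified picture: by Theorem \ref{complexification} and the monodromy discussion, $S(z) = \Psi_*(z)[\bS]\Psi(z)^{-1}$, where $\Psi,\Psi_*$ are fundamental matrices of the input and output singular ODEs \eqref{complex-compat} (which have a singularity of the first kind at $0$ because $\sigma_2$ is invertible) and $[\bS]$ is the constant matrix of the monodromy-intertwining map. Writing $\Psi(z) = z^P B(z)$ and $\Psi_*(z) = z^{P_*}B_*(z)$ with $B,B_*$ entire and invertible off the origin, the entries of $S$ become finite combinations of terms $z^\mu(\log z)^j$ times functions meromorphic at $0$ (the factor $B(z)^{-1}$ contributes at worst a finite-order pole, since $\det B$ vanishes to finite order at $0$). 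As $S$ is single-valued, the preceding Lemma applies and forces each entry of $S$ to be meromorphic at $0$, i.e. the isolated singularity at the origin is a pole of some finite order $N$. This is the step where the noncommutative $ax+b$ structure --- equivalently the presence of $A_1$ with $[A_1,A_2]=iA_2$ --- is indispensable: it is exactly what turns the compatibility relation into a regular singular ODE and rules out an essential singularity at $0$, which a bare quasinilpotent colligation (e.g. a Volterra operator) would otherwise produce. I expect this meromorphy claim to be the only genuinely hard part.

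Granting meromorphy at $0$, the function $S - I_\cE$ is analytic on $\CC\setminus\{0\}$, has a pole of order at most $N$ at $0$, and vanishes at infinity; subtracting its principal part and invoking Liouville shows $S(z) - I_\cE = \sum_{k=1}^{N} c_{-k} z^{-k}$ is a finite tail of negative powers. Comparing with the Laurent expansion above gives $\Phi A_2^k \Phi^* \sigma_2 = 0$, hence $\Phi A_2^k\Phi^* = 0$ (as $\sigma_2$ is invertible), for all $k \ge N$.

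Finally I would convert this vanishing into $\dim\cH<\infty$. Consider the reachability map $\mathcal{R}$ sending a finitely supported sequence $(e_k)$ to $\sum_k A_2^k\Phi^* e_k$, which has dense range since $\cH = \bigvee_k A_2^k\Phi^*\cE$, and the observability map $\mathcal{O}\colon h \mapsto (\Phi A_2^j h)_{j\ge 0}$, which is injective because $\cH = \bigvee_j A_2^{*j}\Phi^*\cE$. The composite $\mathcal{O}\mathcal{R}$ is the block-Hankel operator with blocks $\Phi A_2^{j+k}\Phi^*$; these vanish once $j+k\ge N$, so every image lies in $\bigoplus_{j=0}^{N-1}\cE$ and $\dim\operatorname{ran}(\mathcal{O}\mathcal{R}) \le N\dim\cE < \infty$. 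Since $\mathcal{O}$ is injective it preserves linear dimension, whence $\dim\operatorname{ran}\mathcal{R} = \dim\mathcal{O}(\operatorname{ran}\mathcal{R}) < \infty$; being finite-dimensional, $\operatorname{ran}\mathcal{R}$ is closed, and being dense it equals $\cH$. Therefore $\dim\cH<\infty$.
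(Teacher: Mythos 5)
Your strategy coincides with the paper's on the crucial step --- meromorphy of $S(z)$ at the origin via the regular-singular-point structure of \eqref{complex-compat}, the monodromy relation, and the Lemma on $\log$-power expansions --- but your endgame is genuinely different. The paper, once the pole at $0$ is established, notes that $S$ is then rational, $\sigma_2$-inner and normalized at infinity, invokes the realization theorem \cite[Thm.\ 2.16]{AlpGoh} to produce a finite-dimensional colligation with the same characteristic function, and concludes by uniqueness of minimal realizations. You instead exploit quasinilpotence of $A_2$ directly: the Laurent series of $S - I_{\cE}$ in powers of $z^{-1}$ converges on all of $\CC \setminus \{0\}$, so a pole of order $N$ at $0$ forces $\Phi A_2^k \Phi^* = 0$ for $k \ge N$, and your reachability/observability block-Hankel argument (resting on Proposition \ref{principal-struct}, its Corollary, and minimality) gives $\dim \cH \le N \dim \cE$. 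This is more elementary and self-contained: it bypasses the external realization theorem, makes no use of innerness of $S$, and even produces an explicit bound on $\dim \cH$. That part of your proof is correct.

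There is, however, one point where your write-up is too quick, and it is exactly where the paper has to work: the application of the Lemma. As stated, the Lemma requires the combination on the right-hand side to be $\sum_j z^{n_j} (\log z)^j g_j(z)$ with \emph{integer} exponents $n_j$ and $g_j$ \emph{analytic} at $0$. But the entries of $\Psi_*(z)[\bS]\Psi(z)^{-1}$ a priori involve powers $z^{\mu - \lambda}$ for arbitrary pairs of eigenvalues $\mu$ of $P_*$ and $\lambda$ of $P$, and these differences need not be integers; single-valuedness of $S$ alone does not put you in the hypotheses of the Lemma. To reduce to integer exponents one must first use the intertwining relation $C M = M_* C$ (together with invertibility of $C$, which holds because $S(z)$ is pointwise invertible, its inverse being the characteristic function of the adjoint colligation built from the quasinilpotent operator $A_2^*$) to conclude that $[\bS]$ couples only those generalized eigenspaces of $P$ and $P_*$ whose eigenvalues are congruent modulo $\ZZ$; then, within each such class, one factors out the common fractional power $z^{\lambda_j}$, after which all remaining exponents are integers and the Lemma applies entrywise. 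This is precisely the block decomposition \eqref{eq:block_SB} in the paper's proof; your argument needs it (or an equivalent strengthening of the Lemma allowing meromorphic coefficients and exponents grouped mod $\ZZ$) to be complete.
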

\begin{proof}
Note that by \cite[Thm.\ 4.2.1]{CL1} a fundamental solution matrix of the input compatibility equation has the form $\Psi(z) = B(z) z^P$, where $B$ is analytic at $0$. Similarly at the output we get a fundamental matrix $\Psi_*(z) = B_*(z) z^{P_*}$. Hence the matrices representing the monodromy operators with respect to $\Psi(z)$ and $\Psi_*(z)$ are $M = e^{2 \pi i P}$ and $M_* = e^{2 \pi i P_*}$, respectively. Now let $C$ denote the matrix representing multiplication by $S(z)$. We have proved above that $M_* = C M C^{-1}$. Hence $M$ and $M_*$ have the same Jordan block structure. Additionally it implies that multiplication by $S(z)$ maps generalized eigenspaces of $M$ to generalized eigenspaces of $M_*$.

Assume without loss of generality that both $P$ and $P_*$ are in the Jordan normal form. Note that the generalized eigenspace of $M$ corresponding to an eigenvalue $e^{2 \pi i \lambda}$ is precisely the sum of all generalized eigenspaces of $P$, corresponding to eigenvalues differing from $\lambda$ by an integer. Let $V$ be such a generalized eigenspace, then $C V$ is a generalized eigenspace for $M_*$ corresponding to the same eigenvalue $e^{2 \pi i \lambda}$. Hence in particular, this space is the sum of all eigenspaces of $P_*$, corresponding to eigenvalues differing from $\lambda$ by an integer. 

Let $\lambda_1,\ldots,\lambda_r$ be a set of representatives of the equivalence classes of eigenvalues of $P$, with respect to the equivalence relation: $\lambda \sim \mu$ if $\lambda - \mu \in \ZZ$. Let us write $P_j$ and $P_{*j}$ for the submatrices of $P$ and $P_*$, respectively, consisting of all the Jordan blocks corresponding to the equivalence class of $\lambda_j$. We get the equation:
\[
S(z) B(z) \begin{pmatrix}
z^{P_1} & 0 & \ldots & 0 \\
0 & z^{P_2} & \ldots & 0 \\
& & \ddots & \\
0 & 0 & \ldots & z^{P_r} 
\end{pmatrix} = B_*(z) \begin{pmatrix}
z^{P_{*1}} C_1 & 0 & \ldots & 0 \\
0 & z^{P_{*2}} C_2 & \ldots & 0 \\
& & \ddots & \\
0 & 0 & \ldots & z^{P_{*r}} C_r 
\end{pmatrix}.
\]
Here $C_j$ are the corresponding diagonal blocks of $C$. If we decompose $S(z) B(z)$ and $B_*(z)$ into blocks of columns of sizes corresponding to the $P_j$'s, we get:
\begin{equation} \label{eq:block_SB}
[S(z) B(z)]_j z^{P_j} = [B_*(z)]_j z^{P_{*j}} C_j.
\end{equation}
Since every entry of $z^{P_j}$ and $z^{P_{*j}}$ is either of the form $z^{\lambda_j + n} \log^k(z)$, for $n$ and $k$ integers, or $0$, we can factor out $z^{\lambda_j}$ on both sides. We now apply the lemma to deduce that $S(z)$ has at most a pole at $0$. Indeed note that $B(z)$ is a matrix valued function analytic at $0$ and  $\det B(z) \not\equiv 0$. Therefore, if $S(z)$ had an essential singularity at $0$, so would $S(z) B(z)$, but then at least one entry of $[S(z) B(z)]_j$, for some $j$, would have had an essential singularity. However, applying the lemma to each entry of the equality \eqref{eq:block_SB} we get a contradiction; note that every entry of $[S(z) B(z)]_j$ appears at least once since $z^{P_j}$ is an upper-triangular matrix with non-zero diagonal elements.

Since $S(z)$ is in fact the characteristic function of $i A_2$ it is $\sigma_2$-inner on the lower half-plane and analytic at infinity (in fact $S(\infty) = I$). Note that by Proposition \ref{principal-struct} the minimality of the vessel is equivalent to the minimality of the colligation $(i A_2,\cH,\cE,\Phi,\sigma_2)$.

On the other hand we can apply \cite[Thm.\ 2.16]{AlpGoh} to obtain that $S(z)$ can be realized as the characteristic function of a colligation with a finite-dimensional state space. Since two colligations with the same characteristic function are unitarily equivalent, we get that $\dim \cH < \infty$. 
\end{proof}

\bibliographystyle{plain}
\bibliography{Lie_Algebra_Operator_Vessels}{}

\def\cprime{$'$} \def\cprime{$'$} \def\cprime{$'$}
\begin{thebibliography}{10}

\bibitem{Aarnes}
Johan~F. Aarnes.
\newblock Differentiable representations. {I}. {I}nduced representations and
  {F}robenius reciprocity.
\newblock {\em Trans. Amer. Math. Soc.}, 220:1--35, 1976.

\bibitem{AlpGoh}
Daniel Alpay and Israel Gohberg.
\newblock Unitary rational matrix functions.
\newblock In {\em Topics in interpolation theory of rational matrix-valued
  functions}, volume~33 of {\em Oper. Theory Adv. Appl.}, pages 175--222.
  Birkh\"auser, Basel, 1988.

\bibitem{AMV2009}
Daniel Alpay, Andrey Melnikov, and Victor Vinnikov.
\newblock Un algorithme de {S}chur pour les fonctions de transfert des
  syst\`emes surd\'etermin\'es invariants dans une direction.
\newblock {\em C. R. Math. Acad. Sci. Paris}, 347(13-14):729--733, 2009.

\bibitem{BV}
Joseph~A. Ball and Victor Vinnikov.
\newblock Overdetermined multidimensional systems: state space and frequency
  domain methods.
\newblock In {\em Mathematical systems theory in biology, communications,
  computation, and finance (Notre Dame, IN, 2002)}, volume 134 of {\em IMA Vol.
  Math. Appl.}, pages 63--119. Springer, New York, 2003.

\bibitem{BS1}
Daniel Belti{\c{t}}{\u{a}} and Mihai {\c{S}}abac.
\newblock {\em Lie algebras of bounded operators}, volume 120 of {\em Operator
  Theory: Advances and Applications}.
\newblock Birkh\"auser Verlag, Basel, 2001.

\bibitem{BenIsa77}
G.~M. Benkart and I.~M. Isaacs.
\newblock On the existence of ad-nilpotent elements.
\newblock {\em Proc. Amer. Math. Soc.}, 63(1):39--40, 1977.

\bibitem{BorWal}
A.~Borel and N.~Wallach.
\newblock {\em Continuous cohomology, discrete subgroups, and representations
  of reductive groups}, volume~67 of {\em Mathematical Surveys and Monographs}.
\newblock American Mathematical Society, Providence, RI, second edition, 2000.

\bibitem{B1}
Nicolas Bourbaki.
\newblock {\em Lie groups and {L}ie algebras. {C}hapters 1--3}.
\newblock Elements of Mathematics (Berlin). Springer-Verlag, Berlin, 1998.
\newblock Translated from the French, Reprint of the 1989 English translation.

\bibitem{Br}
M.~S. Brodski{\u\i}.
\newblock {\em Triangular and {J}ordan representations of linear operators}.
\newblock American Mathematical Society, Providence, R.I., 1971.
\newblock Translated from the Russian [Izdat. ``Nauka'', Moscow, 1969] by J. M.
  Danskin, Translations of Mathematical Monographs, Vol. 32.

\bibitem{BrLiv}
M.~S. Brodski{\u\i} and M.~S. Liv{\v{s}}ic.
\newblock Spectral analysis of non-selfadjoint operators and intermediate
  systems.
\newblock {\em Amer. Math. Soc. Transl. (2)}, 13:265--346, 1960.
\newblock English translation of the Russian original, {\em Uspehi Mat. Nauk
  (N.S.)} 13 (1958) no. 1(79), 3-85.

\bibitem{Che99}
Claude Chevalley.
\newblock {\em Theory of {L}ie groups. {I}}, volume~8 of {\em Princeton
  Mathematical Series}.
\newblock Princeton University Press, Princeton, NJ, 1999.
\newblock Fifteenth printing, Princeton Landmarks in Mathematics.

\bibitem{CL1}
Earl~A. Coddington and Norman Levinson.
\newblock {\em Theory of ordinary differential equations}.
\newblock McGraw-Hill Book Company, Inc., New York-Toronto-London, 1955.

\bibitem{CorGrl90}
Lawrence~J. Corwin and Frederick~P. Greenleaf.
\newblock {\em Representations of nilpotent {L}ie groups and their
  applications. {P}art {I}}, volume~18 of {\em Cambridge Studies in Advanced
  Mathematics}.
\newblock Cambridge University Press, Cambridge, 1990.
\newblock Basic theory and examples.

\bibitem{dBR2}
L.~de~Branges and J.~Rovnyak.
\newblock Canonical models in quantum scattering theory.
\newblock In {\em Perturbation Theory and its Applications in Quantum Mechanics
  (Proc.~Adv.~Sem.~Math.~Res.~Center, U.S.~Army, Theoret.~Chem.~Inst., Univ.~of
  Wisconsin, Madison, Wis., 1965)}, pages 295--392. 1966.

\bibitem{dBR1}
L.~de~Branges and J.~Rovnyak.
\newblock {\em Square Summable Power Series}.
\newblock Holt, Rinehart and Winston, New York, 1966.

\bibitem{F1}
Gerald~B. Folland.
\newblock {\em A course in abstract harmonic analysis}.
\newblock Studies in Advanced Mathematics. CRC Press, Boca Raton, FL, 1995.

\bibitem{G4}
H.~Gauchman.
\newblock On nonselfadjoint representations of {L}ie algebras.
\newblock {\em Integral Equations Operator Theory}, 6(5):672--705, 1983.

\bibitem{G1}
Hillel Gauchman.
\newblock Operator colligations on differentiable manifolds.
\newblock In {\em Toeplitz centennial (Tel Aviv, 1981)}, volume~4 of {\em
  Operator Theory: Adv. Appl.}, pages 271--302. Birkh\"auser, Basel, 1982.

\bibitem{G2}
Hillel Gauchman.
\newblock Connection colligations of the second order.
\newblock {\em Integral Equations Operator Theory}, 6(2):184--205, 1983.

\bibitem{G3}
Hillel Gauchman.
\newblock Connection colligations on {H}ilbert bundles.
\newblock {\em Integral Equations Operator Theory}, 6(1):31--58, 1983.

\bibitem{G5}
Hillel Gauchman.
\newblock Curvature colligations.
\newblock {\em Integral Equations Operator Theory}, 7(1):45--59, 1984.

\bibitem{Good1969}
Roe~W. Goodman.
\newblock Analytic and entire vectors for representations of {L}ie groups.
\newblock {\em Trans. Amer. Math. Soc.}, 143:55--76, 1969.

\bibitem{Grthndk1}
A.~Grothendieck.
\newblock {\em Topological vector spaces}.
\newblock Gordon and Breach Science Publishers, New York, 1973.
\newblock Translated from the French by Orlando Chaljub, Notes on Mathematics
  and its Applications.

\bibitem{HochMos}
G.~Hochschild and G.~D. Mostow.
\newblock Cohomology of {L}ie groups.
\newblock {\em Illinois J. Math.}, 6:367--401, 1962.

\bibitem{Komy}
S.~R. Komy.
\newblock On the first cohomology group for simply connected {L}ie groups.
\newblock {\em J. Phys. A}, 18(8):1159--1165, 1985.

\bibitem{LivYan}
Mikhail~S. Livshits and Artem~A. Yantsevich.
\newblock {\em Operator colligations in {H}ilbert spaces}.
\newblock V. H. Winston \& Sons, Washington, D.C., 1979.
\newblock Edited and with a foreword by Ronald G. Douglas, Translated from the
  Russian [Izdat. Har\cprime kov. Univ., Kharkov, 1971].

\bibitem{Liv1946}
M.~S. Liv{\v{s}}ic.
\newblock Ob odnom klasse line\u\i nyh operatorov v gil\cprime bertovom
  prostranstve [{O}n a certain class of linear operators in {H}ilbert space].
\newblock {\em Mathem. Sbor.}, 19(61):239--262, 1946.
\newblock Russian, English translation in {\em Amer. Math. Soc. Transl. (2)} 13
  (1960), 61--83.

\bibitem{Liv78}
M.~S. Liv{\v{s}}ic.
\newblock Commuting nonselfadjoint operators and solutions of systems of
  partial differential equations generated by them.
\newblock {\em Soobshch. Akad. Nauk Gruzin. SSR}, 91(2):281--284, 1978.

\bibitem{Liv79inv}
M.~S. Liv{\v{s}}ic.
\newblock The inverse problem for the characteristic functions of several
  commuting operators.
\newblock {\em Integral Equations Operator Theory}, 2(2):264--286, 1979.

\bibitem{Liv79wav}
M.~S. Liv{\v{s}}ic.
\newblock Operator waves in {H}ilbert space and related partial differential
  equations.
\newblock {\em Integral Equations Operator Theory}, 2(1):25--47, 1979.

\bibitem{Liv01}
M.~S. Liv{\v{s}}ic.
\newblock Vortices of 2{D} systems.
\newblock In {\em Operator theory, system theory and related topics
  ({B}eer-{S}heva/{R}ehovot, 1997)}, volume 123 of {\em Oper. Theory Adv.
  Appl.}, pages 7--41. Birkh\"auser, Basel, 2001.

\bibitem{Liv1}
M.~S. Liv{\v{s}}ic.
\newblock Vortices of 2{D} systems.
\newblock In {\em Operator theory, system theory and related topics
  ({B}eer-{S}heva/{R}ehovot, 1997)}, volume 123 of {\em Oper. Theory Adv.
  Appl.}, pages 7--41. Birkh\"auser, Basel, 2001.

\bibitem{LKMV}
M.~S. Liv{\v{s}}ic, N.~Kravitsky, A.~S. Markus, and V.~Vinnikov.
\newblock {\em Theory of commuting nonselfadjoint operators}, volume 332 of
  {\em Mathematics and its Applications}.
\newblock Kluwer Academic Publishers Group, Dordrecht, 1995.

\bibitem{LivVak74}
M.~S. Liv{\v{s}}ic and L.~L. Waksman.
\newblock Open geometry, and operator colligations.
\newblock {\em Ukrain. Geometr. Sb.}, (15):16--35, i, 1974.

\bibitem{Mack}
George~W. Mackey.
\newblock The {L}aplace transform for locally compact {A}belian groups.
\newblock {\em Proc. Nat. Acad. Sci. U. S. A.}, 34:156--162, 1948.

\bibitem{Mel2011}
Andrey Melnikov.
\newblock Finite-dimensional {S}turm-{L}iouville vessels and their tau
  functions.
\newblock {\em Integral Equations Operator Theory}, 71(4):455--490, 2011.

\bibitem{Nel59}
Edward Nelson.
\newblock Analytic vectors.
\newblock {\em Ann. of Math. (2)}, 70:572--615, 1959.

\bibitem{Seg67}
Irving Segal.
\newblock An extension of a theorem of {L}. {O}'{R}aifeartaigh.
\newblock {\em J. Functional Analysis}, 1:1--21, 1967.

\bibitem{NF}
B.~Sz.-Nagy, C.~Foia\c{s}, H.~Bercovici, and L.~K\'erchy.
\newblock {\em Harmonic Analysis of Operators on {H}ilbert Space}.
\newblock Springer, revised and enlarged edition, 2010.

\bibitem{Tay1972a}
Joseph~L. Taylor.
\newblock A general framework for a multi-operator functional calculus.
\newblock {\em Advances in Math.}, 9:183--252, 1972.

\bibitem{Treves}
Fran{\c{c}}ois Tr{\`e}ves.
\newblock {\em Topological vector spaces, distributions and kernels}.
\newblock Dover Publications Inc., Mineola, NY, 2006.
\newblock Unabridged republication of the 1967 original.

\bibitem{V3}
Victor Vinnikov.
\newblock Commuting nonselfadjoint operators and algebraic curves.
\newblock In {\em Operator theory and complex analysis ({S}apporo, 1991)},
  volume~59 of {\em Oper. Theory Adv. Appl.}, pages 348--371. Birkh\"auser,
  Basel, 1992.

\bibitem{V1}
Victor Vinnikov.
\newblock Commuting operators and function theory on a {R}iemann surface.
\newblock In {\em Holomorphic spaces (Berkeley, CA, 1995)}, volume~33 of {\em
  Math. Sci. Res. Inst. Publ.}, pages 445--476. Cambridge Univ. Press,
  Cambridge, 1998.

\bibitem{Waks}
L.~L. Waksman.
\newblock On characteristic operator-functions of {L}ie-algebras.
\newblock {\em Integral Equations Operator Theory}, 6(2):312--318, 1983.

\bibitem{WarnerG1}
Garth Warner.
\newblock {\em Harmonic analysis on semi-simple {L}ie groups. {I}}.
\newblock Springer-Verlag, New York, 1972.
\newblock Die Grundlehren der mathematischen Wissenschaften, Band 188.

\end{thebibliography}

\end{document}